\newcommand{\ra}{\rightarrow}
\newcommand{\by}[1]{\stackrel{#1}{\ra}}
\newcommand{\ol}{\overline}		\newcommand{\wt}{\widetilde}
\newcommand{\iso}{\by \sim}
\newtheorem{theorem}{Theorem}[section]
\newtheorem{proposition}[theorem]{Proposition}
\newtheorem{lemma}[theorem]{Lemma}
\newtheorem{corollary}[theorem]{Corollary}
\newcommand{\gj}{\blacksquare}
\newcommand{\gt}{\theta}
	\newcommand{\BF}{\mbox{$\mathbb F$}}
 	\newcommand{\BR}{\mbox{$\mathbb R$}}
	\newcommand{\BZ}{\mbox{$\mathbb Z$}}
\newcommand{\mm}{\mbox{$\mathfrak m$}}	
	\newcommand{\p}{\mbox{$\mathfrak p$}}
\newcommand{\op}{\mbox{$\oplus$}}	
	\newcommand{\Hom}{\mbox{\rm Hom}}
\newcommand{\Um}{\mbox{\rm Um}}		\newcommand{\SL}{\mbox{\rm SL}}
\newcommand{\GL}{\mbox{\rm GL}}		\newcommand{\ot}{\mbox{$\otimes$}}
\begin{document} 

\begin{center} {\bf \Large Projective modules over overrings of polynomial rings and a question of Quillen} 
\\\vspace{.2in} {\large Manoj K. Keshari
      and Swapnil A. Lokhande
\footnote{Department of Mathematics, IIT Bombay, Mumbai - 400076, India;\;
        (keshari,swapnil)@math.iitb.ac.in}}\end{center}

\begin{abstract}
Let $(R,\mm,K)$ be a regular local ring containing a field $k$ such
that either char $k=0$ or char $k=p$ and tr-deg $K/\BF_p\geq 1$. Let
$g_1,\ldots,g_t$ be regular parameters of $R$ which are linearly
independent modulo $\mm^2$. Let $A=R_{g_1\cdots g_t}
[Y_1,\ldots,Y_m,f_1(l_1)^{-1},\ldots, f_n(l_n)^{-1}]$, where
$f_i(T)\in k[T]$ and $l_i=a_{i1}Y_1+\ldots+a_{im}Y_m$ with
$(a_{i1},\ldots,a_{im})\in k^m-(0)$. Then every projective $A$-module
of rank $\geq t$ is free. Laurent polynomial case $f_i(l_i)=Y_i$ of
this result is due to Popescu.
\end{abstract}

\section{Introduction}

{\it In this paper, we will assume that rings are commutative
  Noetherian, modules are finitely generated, projective modules are
  of constant rank and $k$ will denote a field.}

 Let $R$ be a ring and $P$ a projective $R$-module. We say that $P$ is
{\it cancellative} if $P\op R^m\iso Q\op R^m$ for some projective
$R$-module $Q$ implies $P\iso Q$. For simplicity of notations, we
begin with a definition.

\begin{define}\label{d}
A ring $A=R[Y_1,\ldots,Y_m,f_1(l_1)^{-1},\ldots,f_n(l_n)^{-1}]$ is
said to be {\bf of type $R[d, m, n]$} if $R$ is a ring of dimension
$d$, $Y_1,\ldots, Y_m$ are variables over $R$, each $f_i(T)\in R[T]$ and
either each $l_i=Y_{i_j}$ for some $i_j$, or $R$ contains a field $k$
and $l_i=\sum_{j=1}^m\,a_{ij}Y_j -b_i$ with $b_i\in R$ and
$(a_{i1},\ldots, a_{im}) \in k^m-(0)$.

Let $A$ be a ring of the type $R[d, m, n]$. We say that $A$ is { \bf
of type $R[d, m, n]^{*}$} if $f_i(T)\in k[T]$ and $b_i \in k$ for all $i$.
\end{define}
\medskip

Let $A=R[Y_1,\ldots,Y_m, f_1(Y_1)^{-1},\ldots, f_n(Y_n)^{-1}]$ be a
ring of type $R[d,m,n]$ with $n\leq m$ and $l_i=Y_i$. If $P$ is a
projective $A$-module of rank $\geq$ max $\{2,d+1\}$, then
Dhorajia-Keshari (\cite{DK}, Theorem 3.12), proved that $E(A\oplus P)$
acts transitively on $\Um(A\oplus P)$ and hence $P$ is
cancellative. This result was proved by Bass \cite{Bass} in case
$n=m=0$; Plumstead \cite{P} in case $m=1$, $n=0$; Rao \cite{Rao} in
case $n=0$; Lindel \cite{L95} in case $f_i=Y_i$.  Gabber \cite{OG}
proved the following result: {\it Let $k$ be a field and $A$ a ring of
  type $k[0,m,n]$. Then every projective $A$-module is free.} We prove
the following result (\ref{p33}) which generalizes (\cite{DK}, Theorem
3.12) and is motivated by Gabber's result.

\begin{theorem}
Let $A=R[Y_1,\ldots,Y_m, f_1(l_1)^{-1},\ldots, f_n(l_n)^{-1}]$ be a ring
  of type $R[d,m,n]$ and $P$ a projective $A$-module of rank $\geq$
  max $\{2,d+1\}$. Then $E(A\op P)$ acts transitively on $\Um(A\op
  P)$.  In particular, $P$ is cancellative.
\end{theorem}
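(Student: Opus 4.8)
The guiding idea is that the coefficient vectors $\bar l_i=(a_{i1},\dots,a_{im})\in k^m$ let the affine group $\GL_m(k)\ltimes k^m$ act on $A$ by $R$-algebra automorphisms preserving the type $R[d,m,n]$; such an automorphism carries $\Um(A\op P)$ to itself and conjugates $E(A\op P)$ into itself, so I am free to put the $l_i$ into a convenient normal form before arguing. The plan is to use these moves to reduce, as far as possible, to the coordinate case $l_i=Y_i$ settled in \cite{DK}, Theorem 3.12, and to handle the residual case by extending their method. (If every $l_i$ is already a coordinate $Y_{i_j}$, merging repeated coordinates reduces at once to \cite{DK}, so the substance is in the field-coefficient case.)

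First I would merge parallel forms. If $\bar l_i=c\,\bar l_j$ with $c\in k^*$, then $l_i=c\,l_j+\delta$ with $\delta=cb_j-b_i\in R$, whence $f_i(l_i)f_j(l_j)=g(l_j)$ for the single polynomial $g(T)=f_i(cT+\delta)f_j(T)\in R[T]$; inverting $g(l_j)$ replaces the two localisations by one and keeps us inside the class $R[d,m,n]$. After finitely many such steps I may assume the $\bar l_i$ are pairwise non-proportional, so that $n\ge r:=\dim_k\mathrm{span}_k\{\bar l_1,\dots,\bar l_n\}$. When $n=r$ the forms are linearly independent; choosing $M\in\GL_m(k)$ with $\bar l_iM=e_i$ for $i\le n$ and then translating $Y_i\mapsto Y_i+b_i'$ turns every $l_i$ into the variable $Y_i$, the remaining $Y_{r+1},\dots,Y_m$ staying free. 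This is exactly the hypothesis of \cite{DK}, Theorem 3.12, which then yields transitivity of $E(A\op P)$ on $\Um(A\op P)$ in this case.

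The genuinely hard case is $n>r$: pairwise non-proportional but linearly dependent forms, where no change of variables makes all the $l_i$ into distinct coordinates. Here I would follow the dimension-induction template of \cite{DK}. The first ingredient is a local--global principle for the $E(A\op P)$-action of the kind used in \cite{DK}, reducing the problem to the case where $R$ is local with maximal ideal $\mm$; the $k$-coefficients of the $\bar l_i$ should guarantee that the relevant specialisation and change-of-variables maps are defined over $R$, so that the patching goes through. The second ingredient is normalisation: over local $R$ a further $k$-linear change of variables of Nagata type makes the combined denominator $\prod_i f_i(l_i)$ monic, up to a unit, in one of the variables, so that after inverting it $A$ becomes module-finite over a polynomial ring in fewer variables; this lets me descend to a base of smaller dimension and run the induction on $d=\dim R$, feeding in the addition and subtraction principles together with Bass's stable-range estimate to keep the rank hypothesis at the sharp value $\ge\max\{2,d+1\}$.

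I expect the main obstacle to be precisely this dependent-forms case. In the coordinate case the variables separate and one can peel off one localisation at a time, but inverting polynomials in linearly dependent forms entangles the variables, so the normalisation step and, more importantly, the bookkeeping needed to keep the rank bound at $\max\{2,d+1\}$ rather than the crude $\max\{2,\dim A+1\}$ furnished for free by Bass, are where the real difficulty lies. Once transitivity of $E(A\op P)$ on $\Um(A\op P)$ is established, cancellativity of $P$ follows by the standard argument: an isomorphism $P\op A\cong Q\op A$ produces a unimodular element of $P\op A$ which transitivity moves onto the distinguished generator of the free summand, and its complement recovers $P\cong Q$.
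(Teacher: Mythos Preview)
Your reduction in the linearly independent case is correct and does land in \cite{DK}, Theorem~3.12. But the dependent-forms case, which you rightly flag as the crux of your plan, contains a genuine gap. The sketch there is too vague, and one step is actually wrong: after making the combined denominator monic in one variable, the ring $A$ does \emph{not} become module-finite over a polynomial ring in fewer variables --- inverting a monic polynomial is not the same as quotienting by it, and localisation never produces a finite extension. So the descent you envisage does not go through. Nor do you have available a local--global principle for the $E(A\oplus P)$-action on $\Um(A\oplus P)$ that would reduce you to local $R$ in the first place.

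More to the point, the whole case distinction is unnecessary, and the paper avoids it. The key observation you are missing is that one never needs to straighten all the $l_i$ simultaneously; it suffices to straighten \emph{one at a time}. The paper first proves the free-module version relative to an ideal (Proposition~\ref{prop3.5}): for $s\in R$ and $r\ge\max\{3,d+2\}$, the group $E^1_r(A,sA)$ acts transitively on $\Um^1_r(A,sA)$. This goes by induction on $n$. At the inductive step a single element of $E_m(k)$ sends $\bar l_n$ to $e_m$, so $l_n=Y_m$ after a translation; this needs only $\bar l_n\neq 0$, nothing about independence. Localising at $S=1+f_n(Y_m)R[Y_m]$ absorbs $Y_m$ into a new base $B=R[Y_m]_{f_nS}$ of the \emph{same} dimension $d$, and the remaining $l_i$ become linear forms in $Y_1,\dots,Y_{m-1}$ with constants $\wt b_i=a_{im}Y_m-b_i\in B$, so $A_S$ is of type $B[d,m-1,n-1]$ and induction applies. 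A fibre-product patching over $C=R[Y_1,\dots,Y_m,f_1(l_1)^{-1},\dots,f_{n-1}(l_{n-1})^{-1}]$, exactly as in \cite{DK}, then recovers the statement over $A$.

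With Proposition~\ref{prop3.5} in hand, the projective case is proved by induction on $d=\dim R$, not by a local--global argument. One chooses $s\in R$ as in Lindel's lemma (\ref{L1.1}) so that $P_s$ is free; reduction modulo $s^2A$ drops the dimension of $R$, and the inductive hypothesis together with (\ref{rem2.1}) moves any $(a,p)\in\Um(A\oplus P)$ into $\Um(A\oplus P,s^2A)$. There the coordinates of $p$ in the approximate basis $p_1,\dots,p_r$ give a row in $\Um_{r+1}(A,sA)$, which Proposition~\ref{prop3.5} trivialises, and Lemma~\ref{lem3.9}(2) upgrades that to an element of $E(A\oplus P)$ sending $(a,p)$ to $(1,0)$. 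So your ``hard case'' simply never arises: linear dependence among the $\bar l_i$ is irrelevant because the argument only ever peels off one localisation at a time.
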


The Bass-Quillen
conjecture \cite{B,Q} says: {\it If $R$ is a regular ring, then every
  projective module over $R[X_1,\ldots,X_r]$ is extended from $R$}.
In B-Q conjecture, we may assume that $R$ is a regular local ring, due
to Quillen's local-global principal \cite{Q}: {\it For a
  ring $B$, projective module $P$ over $B[X_1,\ldots,X_r]$ is extended
  from $B$ if and only if $P_{\mm}$ is free for
  every maximal ideal $\mm$ of $B$}. We remark that Quillen's local global
principal is also true for projective modules over positive graded rings
(\cite{Sw}, Theorem 3.1), whereas it is not true for Laurent polynomial rings
(\cite{BR}, Example 2, p. 809).

Lindel \cite{L} gave an affirmative answer to B-Q conjecture
when $R$ is a {\it regular $k$-spot}, i.e. $R=R'_{\p}$, where $R'$
is some affine $k$-algebra and $\p$ is a regular prime
ideal of $R'$. Using Lindel's result, Popescu \cite{Po2} proved 
B-Q conjecture when $R$ is any regular local ring containing a field $k$.

Let $(R,\mm)$ be a regular local ring. We say that $f\in \mm$ is a
{\it regular parameter} of $R$ if $f$ is part of a minimal generating
set of $\mm$. This is equivalent to $f\in\mm-\mm^2$. Further, let
$g_1,\ldots,g_t\in \mm$ be regular parameters. Then $g_1,\ldots,g_t$
are linearly independent modulo $\mm^2$ if and only if
$g_1,\ldots,g_t$ are part of a minimal generating set of $\mm$.

Quillen \cite{Q} had asked the following question whose affirmative
answer would imply that B-Q conjecture is true: {\it Assume $(R,\mm)$
  is a regular local ring and $f\in \mm$ a regular parameter of $R$.
  Is every projective $R_f$-module free}? 

Bhatwadekar-Rao \cite{BR} answered Quillen's question 
when $R$ is a regular $k$-spot. More generally,
they proved: {\it Let $(R,\mm)$ be a regular $k$-spot
with infinite residue field and $f$ a regular parameter of $R$.  If
$B$ is one of $R$, $R(T)$ or $R_f$, then projective modules over
$B[X_1,\ldots,X_r,Y_1^{\pm 1},\ldots,Y_s^{\pm 1}]$ are free.}  

Rao \cite{R2} generalized above result as follows: {\it
  Let $(R,\mm)$ be a regular $k$-spot with infinite residue field. Let
  $g_1,\ldots,g_t$ be regular parameters of $R$ which are linearly
  independent modulo $\mm^2$.  If $A=R_{g_1\ldots
    g_t}[X_1,\ldots,X_r,Y_1^{\pm 1},\ldots,Y_s^{\pm 1}]$, then
  projective $A$-modules of rank $\geq$ min $\{t,d/2\}$ are free.}

Popescu \cite{Po1} generalized Rao's result as follows: {\it
  Let $(R,\mm,K)$ be a regular local ring containing a field $k$ such
  that either char $k=0$ or char $k=p$ and tr-deg $K/\BF_p\geq 1$. Let
  $g_1,\ldots,g_t$ be regular parameters of $R$ which are linearly
  independent modulo $\mm^2$.  If $A=R_{g_1\ldots
    g_t}[X_1,\ldots,X_r,Y_1^{\pm 1},\ldots,Y_s^{\pm 1}]$, then
  projective $A$-modules of rank $\geq t$ are free.}

We generalize Popescu's result as follows (\ref{t1}):

\begin{theorem}\label{00}
Let $(R,\mm,K)$ be a regular local ring containing a field $k$ such
that either char $k=0$ or char $k=p$ and tr-deg $K/\BF_p\geq 1$. Let
$g_1,\ldots,g_t$ be regular parameters of $R$ which are linearly
independent modulo $\mm^2$.  If $A=R_{g_1\ldots
  g_t}[Y_1,\ldots,Y_m,f_1(l_1)^{-1},\ldots, f_n(l_n)^{-1}]$ is a ring
of type $R_{g_1\ldots g_t}[d-1,m,n]^{*}$, then every projective
$A$-module of rank $\geq t$ is free.
\end{theorem}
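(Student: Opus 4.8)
The plan is to prove the statement by induction on the number $t$ of inverted regular parameters, after first recording a dimension count. Since $g_1,\dots,g_t$ extend to a minimal generating set of $\mm$, every prime of $S:=R_{g_1\cdots g_t}$ avoids $\mm$, so $\dim S=d-1$ and $A$ is genuinely of type $S[d-1,m,n]^{*}$. Hence the cancellation theorem (\ref{p33}) applies to $A$, so $E(A\op P)$ acts transitively on $\Um(A\op P)$ and every projective $A$-module of rank $\geq\max\{2,d\}$ is cancellative. I emphasize that (\ref{p33}) is proved for the general type $R[d,m,n]$ and therefore already absorbs the passage from Popescu's Laurent forms $f_i(l_i)=Y_i$ to the general $f_i(l_i)^{-1}$ at the level of cancellation; the genuinely new tasks are to produce freeness and to lower the rank threshold from $\max\{2,d\}$ down to $t$, and it is the induction on $t$ that accomplishes the latter.

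For the base case $t=0$ the ring is $A_0=R[Y_1,\dots,Y_m,f_1(l_1)^{-1},\dots,f_n(l_n)^{-1}]$ over the regular local ring $R$ containing $k$, and I must show that every projective $A_0$-module $P$ is free. Writing $A_0=R[Y_1,\dots,Y_m]_h$ with $h=\prod_i f_i(l_i)$, I would first invoke Lindel--Popescu's solution of the Bass--Quillen conjecture to make projective modules over $R[Y_1,\dots,Y_m]$ free. After a $k$-linear change of the variables $Y_j$ rendering $h$ monic in $Y_m$ (possible because the $a_{ij}$ lie in $k$), Quillen's monic-inversion principle together with the local--global principle reduces the freeness of $P$ over the localization $A_0$ to the freeness of the restriction $P\otimes_R K$ over the closed fibre $A_0\otimes_R K=K[Y_1,\dots,Y_m]_{\ol h}$. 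The latter ring is of type $K[0,m,n]$, so Gabber's theorem makes $P\otimes_R K$ free; tracing this back shows that $P$ is extended from the local ring $R$ and hence free.

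For the inductive step I would set $C=R_{g_1\cdots g_{t-1}}[Y_1,\dots,Y_m,f_1(l_1)^{-1},\dots,f_n(l_n)^{-1}]$, so that $A=C_{g_t}$, and note that $C/g_tC=(R/g_tR)_{g_1\cdots g_{t-1}}[\ol Y_1,\dots]$, where $R/g_tR$ is again regular local, contains $k$, retains the residue field $K$, and has $g_1,\dots,g_{t-1}$ as regular parameters that are linearly independent modulo its square. Thus both $C$ and $C/g_tC$ are rings to which the inductive hypothesis (only $t-1$ inverted parameters) applies, so projective modules of rank $\geq t-1$ over them are free. Given $P$ projective over $A=C_{g_t}$ of rank $r\geq t$, my aim is to realise $P$ as the restriction of a projective $C$-module $\wt P$ of the same rank $r\geq t>t-1$; the inductive hypothesis then forces $\wt P$ to be free over $C$, and so $P=\wt P_{g_t}$ is free. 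To build $\wt P$ I would patch, along the square relating $C$, $C_{g_t}$, the $g_t$-adic completion $\widehat C$ and $\widehat C_{g_t}$, the given module $P$ over $C_{g_t}$ with a free module over $\widehat C$; here projective $\widehat C$-modules are extended from $\widehat C/g_t\widehat C=C/g_tC$ (as $g_t$ lies in the Jacobson radical of $\widehat C$) and are therefore free of rank $\geq t-1$.

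The main obstacle is precisely this gluing: the patching demands an isomorphism over $\widehat C_{g_t}$ between $P\otimes\widehat C_{g_t}$ and a free module, that is, the freeness of $P$ after completing along $g_t$ and then inverting $g_t$; equivalently one must extend $P$ across the divisor $g_t=0$. This is a Horrocks-type extension problem in which $g_t$ plays the role of a variable while being only a regular parameter, so the classical extension theorems do not apply verbatim. Here I would combine the cancellation statement of the first paragraph (to reduce a stably free module to a free one once the rank is in the cancellative range, the completion having improved the local structure) with the residue-field freeness furnished by Gabber's theorem, following the pattern by which Bhatwadekar--Rao and Rao analyse the single localization $R_f$. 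Carrying out this extension uniformly for the general forms $f_i(l_i)$, rather than only for the Laurent variables treated by Popescu, is the delicate point where I expect the real work to concentrate.
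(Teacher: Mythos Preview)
Your strategy is genuinely different from the paper's, and the inductive step contains a gap that I do not see how to close.

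The paper does \emph{not} induct on $t$. Instead it first invokes Popescu's approximation theorem (\ref{Popescu}) to write $R$ as a filtered colimit of regular $k$-spots $R_j$ with infinite residue field, descends $P$ and the $g_i$ to some $R_j$, and then proves the result for regular $k$-spots (Theorem~\ref{t2.9}). For $k$-spots one has explicit coordinates $R'=K[Z_1,\dots,Z_d]_{(\phi(Z_1),Z_2,\dots,Z_d)}$ via (\ref{BN}), and the proof proceeds by a chain of analytic isomorphisms in these coordinates (Lemmas~\ref{p40}, \ref{p2.7}, \ref{p2.8}), each step peeling off one $Z_i$ and absorbing it as an extra polynomial variable so that (\ref{p33}) and (\ref{s2}) apply at lower dimension. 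The reduction to $k$-spots is essential: the analytic-isomorphism machinery needs the concrete presentation, and your scheme never leaves the abstract regular local ring.

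The specific problem with your induction is the patching step. To glue $P$ over $C_{g_t}$ with a free module over $\widehat C$ you must know that $P\otimes\widehat C_{g_t}$ is free. But $\widehat C$ is the $(g_t)$-adic completion of $R_{g_1\cdots g_{t-1}}[Y_1,\dots,Y_m,f_1(l_1)^{-1},\dots]$; this is a restricted-power-series ring over the $(g_t)$-completion of $R_{g_1\cdots g_{t-1}}$, further localized at the $f_i(l_i)$, and then you invert $g_t$. Showing that projective modules of rank $\ge t$ over \emph{that} ring are free is not visibly easier than the original statement: you have traded one ``invert a regular parameter'' problem for another over a more complicated base, so the induction does not progress. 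Your appeal to (\ref{p33}) cannot rescue this, because cancellation only kicks in at rank $\ge d$, not at rank $\ge t$, and no $K_0$ argument is available for $\widehat C_{g_t}$. You recognize this yourself in the last paragraph, but what remains is exactly the content of the theorem, not a residual technicality.

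A smaller point: in the base case $t=0$, monic inversion goes the wrong way for your purpose. Suslin's theorem takes a module over $B[Y_m]$ that becomes free after inverting a monic and concludes it is extended; it does not let you pass from freeness of $P\otimes_RK$ over the closed fibre to freeness of $P$ over $R[Y]_h$ without first extending $P$ across the divisor $h=0$, which is again the same obstacle. The paper handles $t=0$ by the $R(T)$ trick of (\ref{s2}): write $R(T)=R[X]_{(\mm,X)}[X^{-1}]$, apply the $t=1$ case there, and then use Suslin's monic inversion in the variable $T$ to descend.
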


Note that we can not expect (\ref{00}) for rings of type
$R[d,m,n]$. For example, let $R$ be either $\BR[X,Y]_{(X,Y)}$ or
$\BR[[X,Y]]$ and $A=R[Z, f(Z)^{-1}] $ a ring of type
$R[2,1,1]$, where $f(T)=T^2+X^2+Y^2$.  Then stably free $A$-module $P$ of
rank $2$ given by the kernel of the surjection $(X,Y,Z):A^3\ra A$ is
not free. This will follow from the fact that $P$ over the rings
$\BR[X,Y,Z]_{(X,Y,Z)}[f(Z)^{-1}]$ or
$\BR[[X,Y,Z]][f(Z)^{-1}]$ is not free 
(\cite{BR}, p. 808) and (\cite{N83}, p. 366).

%%%%%%%%%%%%%%%%%%%%%%%%%%%%%%%%%%%%%%%%%%%%%%%%%%%%%%%%
\section{Preliminaries}

Let $A$ be a ring and $M$ an $A$-module. We say $m\in M$
is {\it unimodular} if there exist $\phi\in M^*=\Hom_A(M,A)$ such that
$\phi(m)=1$. The set of all unimodular elements of $M$ is
denoted by $\Um(M)$. 
For an ideal $J\subset A$, we denote by $E^1(A\oplus M,J)$, the
subgroup of $Aut_A(A\oplus M)$ generated by all the automorphisms
$$\Delta_{a\varphi}=\left(
\begin{matrix}
 1 & a\varphi\\
0 & id_M
\end{matrix}  \right)
~~~ \mbox{and} ~~~ \Gamma_{m}=\left(\begin{matrix}
1&0\\
m&id_M 
\end{matrix}\right)$$
with $a\in J,\varphi \in M^*$ and $m \in M$.  In particular, if
$E_{r+1}(A)$ is the group generated by elementary matrices over $A$,
then $E^1_{r+1}(A,J)$ denotes the subgroup of $E_{r+1}(A)$
generated by
$$\Delta_{{\bf a}}=\left(
\begin{matrix}
 1 & {\bf a}\\
0 & id_{F}
\end{matrix}  \right)
~~~\mbox{and} ~~~ \Gamma_{{\bf b}}=\left(\begin{matrix}
1&0\\
{\bf b}^t & id_{F} 
\end{matrix}\right),$$ 
where $F=A^r$, ${\bf a}\in JF$ and ${\bf b} \in F$. We write $E^1(A\op
M)$ for $E^1(A\op M,A)$.

By $\Um^1(A\op M,J)$, we denote the 
set of all $(a,m)\in \Um(A\op M)$ with $a\in 1+J$, 
and $\Um(A\op M,J)$ denotes the set of all $(a,m)\in \Um^1(A\op M)$
with $m \in JM$. We write $\Um_r(A,J)$ for
$\Um(A\op A^{r-1},J)$ and $\Um^{1}_{r}(A,J)$ for $\Um^1(A\op A^{r-1},J)$. 

Let $p\in M$ and $\varphi \in M^*$ be such
that $\varphi(p)=0$. Let $\varphi_p \in End(M)$ be defined as
$\varphi_p(q)=\varphi(q)p$. Then $1+\varphi_p$ is a (unipotent)
automorphism of $M$. An automorphism of $M$ of the form $1+\varphi_p$
is called a {\it transvection} of $M$ if either $p\in \Um(M)$ or
$\varphi \in \Um(M^*)$. We denote by $E(M)$, the subgroup of $Aut(M)$
generated by all transvections of $M$.

The following result is due to Bak-Basu-Rao (\cite{bbr}, Theorem
3.10). In \cite{DK}, we proved results for $E^1(A\op P)$. Due to
this result, we can interchange $E(A\op P)$ and $E^1(A\op P)$.

\begin{theorem}\label{bb}
Let $A$ be a ring and $P$ a projective $A$-module of rank $\geq
2$. Then $E^1(A\op P)=E(A\op P)$.
\end{theorem}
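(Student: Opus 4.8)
The statement has two inclusions. The inclusion $E^1(A\oplus P)\subseteq E(A\oplus P)$ is immediate once we recognise the two families of generators as transvections: writing $N=A\oplus P$ and $N^*=A\oplus P^*$, the automorphism $\Delta_{a\varphi}$ equals $1+\psi_p$ with $p=(1,0)\in\Um(N)$ and $\psi=(0,a\varphi)\in N^*$ satisfying $\psi(p)=0$, while $\Gamma_m$ equals $1+\psi_p$ with $p=(0,m)$ and $\psi=(1,0)\in\Um(N^*)$, again with $\psi(p)=0$. Thus each generator is a transvection (the first based on a unimodular $p$, the second on a unimodular $\psi$), and $E^1(N)\subseteq E(N)$.

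The work is in the reverse inclusion $E(N)\subseteq E^1(N)$, i.e. showing that every transvection $1+\psi_p$ lies in $E^1(N)$. Two tools drive the argument. First, the conjugation rule: for $\sigma\in\Aut(N)$ one has $\sigma(1+\psi_p)\sigma^{-1}=1+(\psi\sigma^{-1})_{\sigma p}$, so conjugating by $\sigma\in E^1(N)$ replaces the pair $(p,\psi)$ by $(\sigma p,\psi\sigma^{-1})$; since $E^1(N)$ is a subgroup, it suffices to prove membership for any $E^1(N)$–conjugate of $1+\psi_p$. Second, a commutator identity internal to $P$: writing $\Delta_\varphi:=\Delta_{1\cdot\varphi}$, a direct computation gives
\[
[\Gamma_m,\Delta_\varphi]=\mathrm{id}_A\oplus(1+\varphi_m)\qquad\text{whenever }\varphi(m)=0,
\]
where $\varphi_m\in\operatorname{End}(P)$ is $x\mapsto\varphi(x)m$. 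Hence every transvection of $P$ — indeed every $1+\varphi_m$ with $\varphi(m)=0$ — lies in $E^1(N)$ as an automorphism fixing the $A$-summand; in particular $E(P)\subseteq E^1(A\oplus P)$.

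Next I would prove an absorption lemma settling all non-generic transvections. Write $p=(s,q)$ and $\psi=(b,\phi)$ with $\psi(p)=sb+\phi(q)=0$, and let $\CO(q)=\{\theta(q):\theta\in P^*\}$ be the content ideal of $q$. If $s\in\CO(q)$, pick $\theta\in P^*$ with $\theta(q)=-s$; conjugating by $\Delta_\theta\in E^1(N)$ carries $p$ to $(0,q)$ and $\psi$ to $(b,\phi-b\theta)$, so the conjugated transvection is based in $0\oplus P$ and factors as $\Gamma_{bq}\cdot(\mathrm{id}_A\oplus(1+\phi'_q))$ with $\phi'=\phi-b\theta$ and $\phi'(q)=0$. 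Both factors lie in $E^1(N)$ — the first by definition, the second by the commutator identity — so $1+\psi_p\in E^1(N)$. This step uses neither unimodularity nor the rank hypothesis, and its transpose handles the dual situation in which the $A$-component $b$ of $\psi$ lies in $\operatorname{im}\phi$ (conjugate instead by a suitable $\Gamma_n$ to kill $b$).

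It remains to treat a transvection based on a unimodular $p=(s,q)$ (the case $\psi\in\Um(N^*)$ being dual) for which $s\notin\CO(q)$; this is where the rank hypothesis $\operatorname{rank}P\ge2$ is indispensable and is the main obstacle. The tempting move — use $E^1(N)$ to carry $p$ to the standard unimodular $(1,0)$, reducing $1+\psi_p$ to a single $\Delta$ — is \emph{not} available, because $E^1(N)$ does not act transitively on $\Um(N)$ over a general ring; equivalently, the content of $q$ cannot in general be enlarged to contain $s$ by elementary moves alone, so the reduction to the absorption lemma is genuinely obstructed. The correct tool is a Suslin–Vaserstein type identity: using a unimodularity witness $(c,\eta)$ with $cs+\eta(q)=1$ together with the extra room supplied by $\operatorname{rank}P\ge2$ (so that $N$ has rank $\ge3$), one writes $1+\psi_p$ directly as a product of elementary transvections. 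A clean way to organise this is the splitting reformulation: unimodularity of $p$ gives $N=Ap\oplus N'$ with $N'=N/Ap$ projective of rank $\ge2$, and $1+\psi_p$ is the \emph{standard} upper elementary transvection relative to this splitting; the theorem is then equivalent to the assertion that the group $E^1(N)$ is independent of the chosen free rank-one summand of $N$. Establishing this independence — the heart of the matter, and precisely the point that fails when $\operatorname{rank}P=1$ (cf. the discrepancy between $E_2$ and the full transvection group of a rank-two module) — is where the rank-$\ge2$ commutator calculus of the second paragraph must be leveraged in full, reducing each unimodular-based transvection, after an $E^1(N)$–conjugation, to the scope of the absorption lemma.
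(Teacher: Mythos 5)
Your preparatory computations are all correct, and I verified them: the identification of $\Delta_{a\varphi}$ and $\Gamma_m$ as transvections (giving $E^1(A\oplus P)\subseteq E(A\oplus P)$), the conjugation rule $\sigma(1+\psi_p)\sigma^{-1}=1+(\psi\sigma^{-1})_{\sigma p}$, the commutator identity $[\Gamma_m,\Delta_\varphi]=\mathrm{id}_A\oplus(1+\varphi_m)$ for $\varphi(m)=0$, and the absorption lemma when $s$ lies in the content ideal of $q$ (together with its dual). You also correctly recognize that the naive reduction of $p$ to $(1,0)$ is unavailable. But the proposal is not a proof, because the case you isolate last --- a transvection $1+\psi_p$ with $p=(s,q)\in \Um(A\oplus P)$ and $s$ outside the content ideal of $q$, and its dual --- is precisely the substance of the theorem, and your treatment of it consists of naming strategies (``a Suslin--Vaserstein type identity,'' ``independence of $E^1$ from the chosen free rank-one summand'') without exhibiting the identity, the conjugation, or any reduction. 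The splitting reformulation does not help as stated: for the summand $N'=N/Ap$ there is no distinguished free rank-one direct summand, so ``$E^1$ relative to the new splitting'' is not even defined by your generators, and proving the asserted independence is essentially the theorem itself, not a step toward it. As written, the argument establishes $E^1(A\oplus P)\subseteq E(A\oplus P)$ and a proper subfamily of the reverse inclusion, and the rank hypothesis is never actually used.

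For comparison: the paper gives no proof of this statement --- it quotes it from Bak--Basu--Rao (\cite{bbr}, Theorem 3.10) --- and the proof there is not a direct commutator calculus but a Quillen--Suslin style local--global argument, which is the missing idea here. Since $\psi(p)=0$, the dilation $\sigma(X)=1+X\psi_p$ is a unipotent path in $\Aut\left((A\oplus P)[X]\right)$ with $\sigma(0)=\mathrm{id}$. For each maximal ideal $\mm$ of $A$, the module $P_{\mm}$ is free of rank $\geq 2$, so $(A\oplus P)_{\mm}$ is free of rank $\geq 3$; over the local ring the unimodular $p$ (resp.\ $\psi$) can be moved by elementary matrices to a standard coordinate vector, whence $\sigma(X)_{\mm}$ lies in the elementary subgroup of $\Aut\left((A\oplus P)_{\mm}[X]\right)$ --- this is exactly where rank $\geq 2$ enters, via elementary transitivity on unimodular rows of length $\geq 3$ over local rings. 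The local--global principle for elementary transvection groups then yields $\sigma(X)\in E^1\left((A\oplus P)[X]\right)$, and evaluation at $X=1$ gives $1+\psi_p\in E^1(A\oplus P)$. If you want to salvage your outline, you should import this localization machinery; a pointwise, ideal-theoretic dichotomy on the content of $q$ cannot close the remaining case, since membership of $1+\psi_p$ in $E^1(A\oplus P)$ is detected only after passing to the polynomial extension and localizing.
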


The following result follows from the definition.

\begin{lemma}\label{rem2.1}
Let $I\subset J$ be ideals of a ring $A$ and $P$ a projective
$A$-module. Then the natural map $E^1(A\oplus
P, J)\rightarrow E^1(\frac{A}{I}\oplus \frac{P}{IP}, \frac{J}{I})$ is
surjective.
\end{lemma}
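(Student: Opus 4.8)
The plan is to use that the displayed map is induced by the base-change functor $-\otimes_A A/I$, so that it is automatically a group homomorphism, and then to check surjectivity one generator at a time. First I would record that reducing an $A$-linear automorphism $\sigma$ of $A\oplus P$ modulo $I$ yields the $A/I$-linear automorphism $\sigma\otimes_A A/I$ of $(A\oplus P)\otimes_A A/I=(A/I)\oplus(P/IP)$; functoriality makes this assignment multiplicative. A direct computation shows it carries $\Delta_{a\varphi}$ to $\Delta_{\bar a\,\bar\varphi}$ and $\Gamma_m$ to $\Gamma_{\bar m}$, where bars denote reduction modulo $I$, so the image of $E^1(A\oplus P,J)$ indeed lies in $E^1(A/I\oplus P/IP,J/I)$. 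Since the target is generated by the $\Delta_{\bar a\,\bar\psi}$ with $\bar a\in J/I$, $\bar\psi\in(P/IP)^*$ together with the $\Gamma_{\bar n}$, $\bar n\in P/IP$, it then suffices to lift each such generator to a generator of the source.

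Lifting the $\Gamma$-generators is immediate, since the quotient $P\surj P/IP$ lets me choose $m\in P$ with image $\bar n$, whence $\Gamma_m\mapsto\Gamma_{\bar n}$. For a $\Delta$-generator I would first lift $\bar a\in J/I$ to $a\in J$ using $J\surj J/I$. The only genuine point is to lift the functional: given $\bar\psi\in(P/IP)^*=\Hom_{A/I}(P/IP,A/I)$, I must produce $\varphi\in P^*$ with $\bar\varphi=\bar\psi$. Here I would invoke projectivity of $P$: the natural base-change map $A/I\otimes_A P^*\ra\Hom_{A/I}(P/IP,A/I)$ is an isomorphism when $P$ is finitely generated projective, and $P^*\surj P^*/IP^*=A/I\otimes_A P^*$, so $P^*\ra(P/IP)^*$ is surjective. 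Equivalently, picking $Q$ with $P\oplus Q\iso A^r$ and dualising gives $P^*\oplus Q^*\iso A^r$, exhibiting $P^*\ra(P/IP)^*$ as a direct summand of the reduction surjection $A^r\surj(A/I)^r$, hence surjective. Choosing $\varphi\in P^*$ over $\bar\psi$ then gives $\Delta_{a\varphi}\mapsto\Delta_{\bar a\,\bar\psi}$.

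Assembling these lifts shows every generator of $E^1(A/I\oplus P/IP,J/I)$ lies in the image, so the map is surjective. I expect the only nonformal step to be the surjectivity of $P^*\ra(P/IP)^*$, which is precisely where finite generation and projectivity of $P$ enter; the remaining lifts are along the quotient maps $P\surj P/IP$ and $J\surj J/I$, which is why the result ``follows from the definition.''
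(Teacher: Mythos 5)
Your proof is correct and is exactly the argument the paper leaves implicit when it says the lemma ``follows from the definition'': reduce generators, then lift $\Gamma_{\bar n}$ along $P\surj P/IP$, lift $\bar a$ along $J\surj J/I$, and lift $\bar\psi$ along $P^*\surj (P/IP)^*$, the last being the one nontrivial point and correctly justified by finite generation and projectivity of $P$ (via $P\oplus Q\iso A^r$ or the base-change isomorphism for $\Hom$). Nothing further is needed.
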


The following result is due to Gabber (\cite{OG}, Theorem 2.1).

\begin{theorem}\label{g}
Let $k$ be a field and
$A=k[Y_1,\ldots,Y_m,f_1(l_1)^{-1},\ldots,f_n(l_n)^{-1}]$ a ring of
type $k[0,m,n]$. Then every projective $A$-module is free.
\end{theorem}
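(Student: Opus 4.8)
The plan is to prove the statement by induction on the number of variables $m$, peeling off one variable at a time through a monic-polynomial fibration. For $m=0$ the ring is $k$ and there is nothing to prove, so assume $m\geq 1$ and that the result is known for every ring of type $k[0,m-1,\cdot]$. First I would separate the defining data according to whether the last variable $Y_m$ occurs. Call a form $l_i$ \emph{horizontal} if $a_{im}=0$ and \emph{transverse} otherwise. The horizontal forms involve only $Y_1,\ldots,Y_{m-1}$, so putting $B=k[Y_1,\ldots,Y_{m-1},\{f_i(l_i)^{-1}: l_i\text{ horizontal}\}]$ produces a ring of type $k[0,m-1,\cdot]$ (each truncated coefficient tuple is still nonzero). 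For a transverse form, $l_i=a_{im}Y_m+(\text{affine-linear in }Y_1,\ldots,Y_{m-1})$ with $a_{im}\in k^\ast$, so $f_i(l_i)$, read inside $B[Y_m]$, equals a constant $c_i\in k^\ast$ times a \emph{monic} polynomial $g_i(Y_m)$. Since $k^\ast\subset B^\ast$, inverting $f_i(l_i)$ is the same as inverting $g_i$, and hence $A=B[Y_m]_g$ where $g=\prod_i g_i$ is monic over $B$. This exhibits $A$ as the localization of the honest polynomial ring $B[Y_m]$ at a single monic polynomial.

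Next I would establish that $B$ satisfies the Bass--Quillen property, i.e. that every projective module over $B[X]$ is extended from $B$. Indeed $B$ is a localization of the affine $k$-algebra $k[Y_1,\ldots,Y_{m-1}]$, so for every maximal ideal $\mathfrak{n}$ of $B$ the ring $B_{\mathfrak{n}}$ is a regular $k$-spot; by Lindel's theorem every projective over $B_{\mathfrak{n}}[X]$ is free, and Quillen's local--global principle then yields that every projective over $B[X]$ is extended from $B$. With this in hand, the decisive step is to pass from the polynomial ring to its monic localization: invoking the monic-inversion theorem (projectives over $B[Y_m]_g$ are extended from $B$ whenever $g$ is monic and projectives over $B[Y_m]$ are extended from $B$), I conclude that every projective $A$-module is extended from $B$. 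By the induction hypothesis every projective $B$-module is free, and therefore so is every projective $A$-module, which closes the induction. Note that this argument never appeals to the statement being proved, only to the classical results of Lindel, Quillen and the monic-inversion machinery, so there is no circularity.

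The main obstacle is precisely the transition from $B[Y_m]$ to $A=B[Y_m]_g$. Quillen's principle and Lindel's theorem control projectives over the plain polynomial extension, but $A$ is the strictly smaller localized ring, and a projective $A$-module need not be the restriction of a projective $B[Y_m]$-module; the complement $V(g)$ is a divisor, so Serre's codimension-two extension argument is unavailable. The content of the monic-inversion step is therefore twofold: a Quillen-type patching over the maximal ideals of $B$ (which remains valid for monic localizations), reducing matters to a regular local fibre $B_{\mathfrak{n}}$, followed by a Horrocks-type argument over $B_{\mathfrak{n}}[Y_m]_g$ that is made available by the genuine monicity of $g$. It is exactly here that the hypothesis enters in an essential way: monicity of $g$ is a consequence of every transverse coefficient $a_{im}$ lying in $k^\ast$, and it is what rescues the patching and descent. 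I would expect the careful verification of this monic-localization extension -- rather than the combinatorial bookkeeping of the horizontal/transverse split -- to be the technical heart of the proof.
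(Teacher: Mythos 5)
Your reduction is correct as far as it goes: splitting the $l_i$ into horizontal and transverse forms, writing $B$ for the ring of type $k[0,m-1,\cdot]$ obtained from the horizontal data, and observing that each transverse $f_i(l_i)$ is a unit times a monic polynomial in $Y_m$ over $B$, so that $A=B[Y_m]_g$ with $g$ monic, is all fine; so is the Lindel--Quillen argument that every projective $B[Y_m]$-module is extended from $B$. But the step you yourself identify as the technical heart --- the ``monic-inversion theorem'' asserting that projectives over $B[Y_m]_g$ are extended from $B$ whenever $g$ is monic and the Bass--Quillen property holds for $B[Y_m]$ --- is false, and the paper itself exhibits the counterexample at the end of its introduction: take $B=\mathbb{R}[X,Y]_{(X,Y)}$, a regular local ring for which Lindel's theorem gives that all projective $B[Z]$-modules are free, and $g=Z^2+X^2+Y^2$, which is monic in $Z$; then the kernel of the surjection $(X,Y,Z):A^3\to A$ over $A=B[Z]_g$ is stably free of rank $2$ and not free, hence not extended from the local ring $B$. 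Suslin's monic inversion theorem (\cite{Su1}, Theorem 1), which the paper does use elsewhere (in the proof of \ref{tt}), only runs in the opposite direction: it descends information from $P_g$ to a module $P$ \emph{already defined over} $B[Y_m]$; it cannot produce an extension of a module that lives only on the localization, which is exactly your situation.

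Both mechanisms you propose for the missing step also fail independently. Quillen-type patching is \emph{not} valid for monic localizations: the paper's introduction notes that Quillen's local--global principle already fails for Laurent polynomial rings (\cite{BR}, Example 2, p.~809), and $B[Y_m]_{Y_m}$ is the simplest monic localization. And the ``Horrocks-type argument over $B_{\mathfrak{n}}[Y_m]_g$'' for a regular local fibre is precisely what the displayed counterexample refutes for a general monic $g$. The real issue is that your reduction to $A=B[Y_m]_g$ remembers only the monicity of $g$ and discards the essential hypothesis of type $k[0,m,n]$: each inverted polynomial is a one-variable $f_i\in k[T]$ evaluated at a $k$-linear form, and the gap between $f_i\in k[T]$ (type $R[d,m,n]^*$) and $f_i\in R[T]$ with $g$ merely monic (type $R[d,m,n]$) is exactly the gap between the paper's positive theorems and its counterexample. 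For what it is worth, the paper does not prove (\ref{g}) at all --- it is quoted from Gabber \cite{OG} --- and in the places where it needs an analogous descent (\ref{prop3.5}, \ref{cor1}, \ref{tt}) it inverts one $f_i(l_i)$ at a time, localizing at the multiplicative set $1+f_n(Y_m)R[Y_m]$ and patching over a fibre square along an analytic isomorphism, rather than attempting any all-at-once monic descent of the kind your argument requires.
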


Recall that a ring $R$ is {\it essentially of finite type over a ring
  $B$} if $R$ is localization of an affine $B$-algebra $C$ at some
multiplicative closed subset of $C$. The following result is due to
Popescu (\cite{Po2}, Theorem 3.1).

\begin{theorem}\label{Popescu}
Let $R$ be a regular local ring containing a field. Then $R$ is a
filtered inductive limit of regular local rings essentially of finite
type over $\BZ$.
\end{theorem}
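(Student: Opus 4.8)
The plan is to prove \ref{00} by reducing the abstract regular local ring $R$ to a regular spot via Popescu's approximation theorem \ref{Popescu}, transporting the problem to a polynomial model over a field where Gabber's theorem \ref{g} applies, and then using the transitivity theorem of the introduction to pass from stable freeness to freeness. The genuinely new point — getting the rank bound down to $t$ rather than to the dimension of $A$ — should come from exploiting that $g_1,\dots,g_t$ are \emph{units} in $A$. First I would perform the limit reduction. Write $R=\varinjlim_\lambda R_\lambda$ as a filtered inductive limit of regular local rings essentially of finite type over $\BZ$ by \ref{Popescu}. Any finitely generated projective $A$-module $P$ is the base change of a projective module $P_\lambda$ over the ring $A_\lambda$ built from some $R_\lambda$, and since $A=\varinjlim A_\lambda$ and freeness of a finitely presented module is detected at a finite level of a filtered colimit, it suffices to prove each $P_\lambda$ free. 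The dichotomy char$\,k=0$ versus char$\,k=p$ with tr-deg$\,K/\BF_p\ge1$ is exactly what allows the $R_\lambda$ to be taken to be regular spots over a field whose residue field may be assumed infinite (if not, replace $R_\lambda$ by $R_\lambda(T)$, which alters neither the ring type nor, by faithfully flat descent, the freeness of $P_\lambda$). Hence we may assume $R$ is a regular $k$-spot with infinite residue field.

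Next I would reduce to a field/polynomial model. Realising $R=C_{\mathfrak p}$ for an affine $k$-algebra $C$ with $\mathfrak p$ regular, a Lindel-type analytic isomorphism puts $R$ in a form in which $g_1,\dots,g_t$ become part of a coordinate system; here the hypothesis that the $g_i$ are linearly independent modulo $\mm^2$ is what lets them be completed to regular parameters that play the role of coordinates. Inverting these coordinates and the $f_i(l_i)$ then turns the model of $A$ into a localization of a polynomial ring over a field of exactly the type $k'[0,m',n']$ handled by Gabber's theorem \ref{g}, so the projective module is free over the model. Because the Lindel isomorphism is only an isomorphism after completion, descending this freeness to $A$ requires a Quillen-style patching argument; the upshot I would aim for is that $P$ is \emph{stably free} over $A$ (equivalently, in the class of a free module in $K_0$), with the analytic comparison absorbing the discrepancy between $P$ and a free module.

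It then remains to convert stable freeness into freeness. Since $\dim R_{g_1\cdots g_t}=d-1$, the ring $A$ is of type $R_{g_1\cdots g_t}[d-1,m,n]$, so the transitivity theorem of the introduction applies and shows that $E(A\oplus P)$ acts transitively on $\Um(A\oplus P)$ for every projective $P$ of rank $\ge\max\{2,d\}$; in that range stably free modules are free and free summands may be cancelled. To reach the ranks $t\le r<d$ I would exploit that $g_1,\dots,g_t$ are units in $A$: their independence modulo $\mm^2$ supplies $t$ elementary ``Laurent'' directions, and a Suslin-type completion argument — compatible with $E^1=E$ from \ref{bb} and the surjectivity in \ref{rem2.1} — should cancel one free summand per available unit, lowering the effective stability bound from $d$ to $t$. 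Combining this with the stable freeness from the model yields freeness of $P$ for every rank $\ge t$.

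I expect the main obstacle to be precisely this last cancellation across the gap $t\le r<d$, where the transitivity theorem alone stops at rank $d$. Squeezing the bound down to $t$ means extracting genuine elementary symmetries from the $t$ invertible regular parameters and checking that they remain available after the $f_i(l_i)^{-1}$-localizations rather than only in the Laurent polynomial case treated by Popescu. A secondary difficulty is making the Lindel analytic isomorphism and the ensuing Quillen patching compatible with those same $f_i(l_i)^{-1}$-localizations, so that Gabber's theorem \ref{g} can indeed be invoked on the resulting fibre; this is the step that must go beyond both the cancellation theorem and the previously known Laurent case.
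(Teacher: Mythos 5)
Your proposal does not address the statement it was supposed to prove. The statement is Theorem \ref{Popescu} itself --- that a regular local ring containing a field is a filtered inductive limit of regular local rings essentially of finite type over $\BZ$ --- whereas your text is an outline of a proof of Theorem \ref{00}, and its very first step invokes \ref{Popescu} as a known tool (``Write $R=\varinjlim_\lambda R_\lambda$ \ldots by \ref{Popescu}''). Relative to the assigned statement this is circular: you assume exactly what is to be proved and then do something else with it. For what it is worth, the paper supplies no proof of \ref{Popescu} either; it is quoted as a black box from Popescu (\cite{Po2}, Theorem 3.1). Its actual proof lies in an entirely different circle of ideas, namely General N\'eron Desingularization (smoothing of regular ring homomorphisms): the inclusion of the prime field $F$ ($\BQ$ or $\BF_p$, both perfect) into $R$ is a regular morphism because $R$ is regular and geometric regularity over a perfect field reduces to regularity, so $R$ is a filtered colimit of smooth $F$-algebras, and localizing these at the contractions of $\mm$ produces the required regular local rings essentially of finite type over $\BZ$. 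None of the projective-module machinery you deploy (Gabber's theorem \ref{g}, transitivity of $E(A\op P)$, Lindel-type analytic isomorphisms, Suslin completion) is relevant to establishing \ref{Popescu}; it all lives downstream of it.

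Even read charitably as a sketch of \ref{00}/\ref{t1}, your plan leaves its two decisive steps open and explicitly flags them as expected obstacles: the descent of freeness through the analytic isomorphism compatibly with the $f_i(l_i)^{-1}$-localizations, and the cancellation bridging the ranks $t\le r<d$. The paper does not obtain that bridge by ``extracting elementary symmetries from the units $g_i$''; it follows Rao's method through the chain (\ref{p32}), (\ref{p40}), (\ref{p2.7}), (\ref{p2.8}), (\ref{t2.9}), repeatedly passing to localizations at sets of homogeneous or monic elements so that the rank-versus-dimension inequality needed for (\ref{p33}) is restored at each stage. So the proposal neither proves the stated theorem nor completes the different theorem it substitutes for it.
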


The following result is due to Wiemers (\cite{W}, Proposition 2.5).

\begin{proposition}\label{w1}
Let $R$ be a ring of dimension $d$ and $A=R[X_1,\ldots,X_r,Y_1^{\pm
    1},\ldots,Y_s^{\pm 1}]$. Let $c\in \{1,X_r, Y_s-1\}$. If $s\in R$
and $r\geq max \{3,d+2\}$, then $E_r^1(A,scA)$ acts transitively on
$\Um^1_r(A,scA)$.
\end{proposition}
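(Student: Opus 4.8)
The plan is to prove transitivity by two nested reductions: a relative local--global principle to pass to a local base ring, and then a variable-by-variable induction that strips off the $X_i$ and $Y_j$ one at a time until only the base ring $R$ remains, where a relative stable-range theorem finishes the job. Two points make the bookkeeping work: each stripping step removes a variable but leaves the row length $r$ and the base dimension $d=\dim R$ untouched, so the hypothesis $r\ge\max\{3,d+2\}$ survives throughout; and each step only enlarges the ambient ring, so an $E_r^1$-equivalence obtained after a specialization lifts to an $E_r^1$-equivalence upstairs. Since the orbit relation is thus preserved at every stage, it is enough to establish transitivity in the base case.

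For the first reduction I would invoke the relative analogue of the Quillen--Suslin local--global principle: if $v,w\in\Um_r^1(A,scA)$ lie in the same $E_r^1(A_\mm,(scA)_\mm)$-orbit for every maximal ideal $\mm$ of $R$, then they already lie in the same $E_r^1(A,scA)$-orbit. This lets me assume $(R,\mm)$ is local of dimension $d$, so that $r-1\ge d+1$ is at least the Bass stable rank of $R$; that inequality is exactly what powers the Suslin--Vaserstein elementary manipulations below, and it is also what makes the base case (no variables) work, since over the local ring $R$ the relative group $E_r^1(R,scR)$ acts transitively on $\Um_r^1(R,scR)$ by Vaserstein's relative stable-range theorem.

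For the inductive step I treat a polynomial and a Laurent variable separately, the admissible values $c\in\{1,X_r,Y_s-1\}$ being precisely the elements that cut out the basepoints of the relevant specializations. To strip the top polynomial variable $X_r$ (case $c=X_r$), I would use Suslin's monic trick to arrange, after an elementary move, that some coordinate of $v$ is monic in $X_r$, and then run a Horrocks-type argument over $A'[X_r]$, with $A'=R[X_1,\dots,X_{r-1},Y_1^{\pm1},\dots,Y_s^{\pm1}]$, to show that $v$ is $E_r^1(A,scA)$-equivalent to its specialization $v|_{X_r=0}$. To strip the top Laurent variable $Y_s$ (case $c=Y_s-1$), I would write $A=A'[Y_s^{\pm1}]$ with $A'=R[X_1,\dots,X_r,Y_1^{\pm1},\dots,Y_{s-1}^{\pm1}]$ and patch over the two charts of $\Spec A$ on which $Y_s$ and $Y_s^{-1}$ respectively remain regular, the choice $c=Y_s-1$ supplying the basepoint $Y_s=1$ around which to homotope. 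The case $c=1$ is the weakest relative condition and feeds the local--global step; the element $s\in R$ is inert under all these localizations and specializations and is merely carried along inside the relative ideal.

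The main obstacle I anticipate is the Laurent variable. Since $Y_s$ cannot be set to $0$, the monic trick does not apply verbatim and the two charts must be reconciled along the overlap $A_{Y_s}$; moreover this reconciliation must take place inside the \emph{relative} group $E_r^1(\,\cdot\,,scA)$ rather than the full elementary group. Concretely, one must ensure that the elementary matrices produced on each chart can be corrected to agree modulo $scA$ on the overlap and to fix the congruence class $a_1\equiv1\ (\mathrm{mod}\ scA)$, so that the glued automorphism genuinely belongs to $E_r^1(A,scA)$. Managing this patching, and checking that the stable-rank inequality $r-1\ge d+1$ is respected after each localization, is where the real work of the proof lies.
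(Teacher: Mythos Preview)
The paper does not prove this proposition at all: it is quoted in the preliminaries section as a result of Wiemers (\cite{W}, Proposition~2.5) and used as a black box in the proof of Proposition~\ref{prop3.5}. There is therefore no ``paper's own proof'' to compare against.

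For what it is worth, your outline---a relative Quillen--Suslin local--global reduction to local $R$, followed by an induction that peels off the polynomial variables via Suslin's monic trick and the Laurent variables via a two-chart patching around the basepoint $Y_s=1$, with Vaserstein's relative stable range theorem at the bottom---is essentially the strategy Wiemers carries out in \cite{W}. You have correctly isolated the Laurent step as the subtle one, and your remark that the three admissible values of $c$ correspond to the basepoints of the three specializations is exactly right. Your sketch is an accurate high-level summary of that proof, though of course the actual execution of the Laurent patching inside the relative elementary group is where all the work lies, as you yourself note.
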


The following result is due to Lindel (\cite{L95}, Lemma 1.1).

\begin{lemma}\label{L1.1}
Let $A$ be a ring and $P$ a projective $A$-module of rank $r$. Then
there exist $s\in A$, $p_1,\ldots,p_r\in P$ and $\phi_1,\ldots,\phi_r
\in Hom(P,A)$ such that following holds: $P_s$ is free,
$(\phi_i(p_j))=$ diagonal $(s,\ldots,s)$, $sP\subset p_1A+\ldots
+p_rA$, the image of $s$ in $A_{red}$ is a non-zerodivisor and
$(0:sA)=(0:s^2A)$.
\end{lemma}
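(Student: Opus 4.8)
The plan is to manufacture every assertion from a single well-chosen element $s_0\in A$ that makes $P$ free after inversion while remaining a non-zerodivisor modulo nilpotents, and then to pass to a sufficiently high power of $s_0$. First I would locate $s_0$. Let $N$ be the nilradical of $A$ and $\p_1,\ldots,\p_k$ the finitely many minimal primes; these are exactly the associated primes of the reduced ring $A_{red}$, so the non-zerodivisors of $A_{red}$ are precisely the images of the multiplicatively closed set $S=A\setminus(\p_1\cup\cdots\cup\p_k)$. The key observation is that the primes of $S^{-1}A$ are exactly the $\p_i S^{-1}A$, each of which is simultaneously minimal and maximal, so $S^{-1}A$ is Noetherian of dimension $0$, hence Artinian, hence a finite product of Artinian local rings. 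Over each local factor a finitely generated projective module is free, and since $P$ has constant rank $r$ this forces $S^{-1}P\cong (S^{-1}A)^r$. Because $P$ is finitely presented (projective over a Noetherian ring), this isomorphism is already defined over $A_{s_0}$ for some $s_0\in S$: choosing a basis of $S^{-1}P$ and clearing denominators yields a map $A^r\to P$ that is an isomorphism after inverting $S$, hence an isomorphism after inverting a single $s_0\in S$ (its kernel and cokernel are finitely generated and killed by $S$). Thus $P_{s_0}$ is free, and $s_0\notin\bigcup\p_i$ ensures that the image in $A_{red}$ of $s_0$, and of every power of $s_0$, is a non-zerodivisor.

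Next I would construct $p_j$ and $\phi_i$ by clearing denominators in a dual basis. Fix a basis $b_1,\ldots,b_r$ of $P_{s_0}$ and its dual basis $b_1^*,\ldots,b_r^*\in (P^*)_{s_0}$. For a large exponent $c$ the elements $s_0^c b_j$ and $s_0^c b_i^*$ lie in the images of $P\to P_{s_0}$ and $P^*\to (P^*)_{s_0}$; choose lifts $p_j\in P$ and $\psi_i\in P^*$. In $A_{s_0}$ we then have $\psi_i(p_j)=s_0^{2c}\delta_{ij}$ and $s_0^{2c}\,p=\sum_i\psi_i(p)\,p_i$ for every $p\in P$, and since $P$ and $A$ are Noetherian a single exponent $e'$ clears both families of relations down to $A$ (resp.\ $P$) uniformly in $p$. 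Setting $\phi_i=s_0^{E}\psi_i$ and $s=s_0^{E+2c}$ for any $E\ge e'$ gives $\phi_i(p_j)=s\,\delta_{ij}$ exactly in $A$, while multiplying the cleared identity $s_0^{e'+2c}P\subseteq p_1A+\cdots+p_rA$ by $s_0^{E-e'}$ yields $sP\subseteq p_1A+\cdots+p_rA$. As $s$ is a power of $s_0$ we have $P_s=P_{s_0}$ free and $s$ still a non-zerodivisor modulo $N$.

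Finally, the condition $(0:sA)=(0:s^2A)$ is obtained for free once $E$ is taken large enough. The ascending chain $(0:s_0)\subseteq(0:s_0^2)\subseteq\cdots$ stabilizes at some exponent $n_0$ by Noetherianity, so imposing $E+2c\ge n_0$ forces $(0:s_0^{E+2c})=(0:s_0^{2(E+2c)})$, that is $(0:sA)=(0:s^2A)$. Every requirement in the previous step only asked that $E$ (hence $E+2c$) be sufficiently large, so a single exponent exceeding all the finitely many lower bounds produces one $s=s_0^{E+2c}$, together with $p_1,\ldots,p_r$ and $\phi_1,\ldots,\phi_r$, satisfying all five conclusions simultaneously. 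I expect the only genuinely conceptual point to be the first step: recognizing that localizing at the set of non-zerodivisors modulo nilpotents turns $A$ into a zero-dimensional (Artinian) ring, over which constant-rank projectives are automatically free, is exactly what permits inverting a \emph{single} $s_0$ that is at once a freeing element for $P$ and a non-zerodivisor modulo nilpotents. Everything after that is systematic denominator-clearing combined with finite generation, with no further obstruction.
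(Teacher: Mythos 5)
Your proof is correct and is essentially Lindel's original argument for this lemma, which the paper itself does not reprove but simply cites from \cite{L95}: localize at the multiplicative set of elements that are non-zerodivisors modulo the nilradical to get a zero-dimensional (semilocal) ring over which the constant-rank projective module becomes free, descend the trivialization to a single $s_0$, clear denominators in a dual basis, and pass to a high power of $s_0$ to stabilize the annihilator chain and force $(0:sA)=(0:s^2A)$. All the Noetherian finiteness points you invoke (finitely many minimal primes, finitely generated kernel and cokernel, uniform exponents on generators) are covered by the paper's blanket hypotheses, so there is no gap.
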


\begin{define}{\label{ai}}
Let $R\subset S$ be rings and $h\in R$ be a non-zerodivisor in $R$ and
$S$ both. If the natural map $R/hR\ra S/hS$ is an isomorphism, then we
say $R\ra S$ is an {\it analytic isomorphism} along $h$. In this case,
we get the following fiber product diagram
\[
\xymatrix{
R \ar@{->}[r]
     \ar@{->}[d]
& S 
     \ar@{->}[d] 
\\
R_h   
\ar@{->}[r]
     &S_h .    
}
\]

In particular, if $P$ is a projective $S$-module such that $P_h$ is
free, then $P$ is extended from $R$.
\end{define}

The following result is due to Nashier (\cite{N}, Theorem 2.8). See
also (\cite{BR}, Proposition, p. 803).

\begin{proposition}\label{BN}
Let $(R,\mm)$ be a regular $k$-spot over a perfect field $k$. Let
$g\in \mm$ and $f$ be any regular parameter of $R$ with $(g,f)$ a
regular sequence. Then there exist a field $K/k$ and a regular
$K$-spot $R'$ such that

$(i)$ $R'=K[Z_1,\ldots,Z_d]_{(\phi(Z_1),Z_2,\ldots,Z_d)}$, where
$\phi(Z_1)\in K[Z_1]$ is an irreducible monic polynomial. Moreover,
we may assume $Z_d=f$.

$(ii)$ $R'\subset R$ is an analytic isomorphism along $h$ for some
$h\in gR\cap R'$.

$(iii)$ If $R/\mm$ is infinite, then $K$ is also infinite.
\end{proposition}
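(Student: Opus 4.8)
The plan is to read the field $K$ and the polynomial $\phi$ directly off the residue field of $R$, then realize $R$ as an analytic isomorphism of the local ring of a point of affine $d$-space over $K$, with the two distinguished elements $f$ and $g$ built into the construction. Write $L = R/\mm$ and $d = \dim R$. Since $R$ is a regular $k$-spot, $R = C_\p$ for an affine $k$-algebra $C$ which, after localizing, I may take to be smooth over $k$ (regularity over the perfect field $k$ is geometric regularity); then $\mbox{tr.deg}_k L = \dim C - d =: r$. Because $k$ is perfect, $L/k$ is separably generated, so I would fix a separating transcendence basis and set $K = k(t_1,\ldots,t_r) \subset L$, whence $L/K$ is finite separable. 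By the primitive element theorem $L = K(\theta)$, and I take $\phi \in K[Z_1]$ to be the irreducible monic (separable) minimal polynomial of $\theta$, so that $L = K[Z_1]/(\phi(Z_1))$. Part $(iii)$ is immediate: if $L$ is infinite then either $k$ is infinite, so $K \supseteq k$ is, or $k$ is finite and $r \geq 1$, forcing the rational function field $K = k(t_1,\ldots,t_r)$ to be infinite.

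Next I would choose coordinates adapted to $f$ and $\theta$. Lift $\theta$ to some $z_1 \in R$; separability gives $\phi'(\theta) \neq 0$, so $\phi(z_1) \in \mm \setminus \mm^2$ is a regular parameter. I then complete $\phi(z_1)$ and the given regular parameter $f$ to a regular system of parameters $\phi(z_1), z_2, \ldots, z_{d-1}, z_d = f$ of $R$ (adjusting $z_1$ within its residue class if needed so that $\phi(z_1)$ and $f$ are linearly independent modulo $\mm^2$, which is possible since $(g,f)$ is a regular sequence and hence $f \notin \mm^2$). Lifting the $t_j$ into $C$ and passing to $K$, the ring $R$ becomes a regular $K$-spot of dimension $d$ whose residue field $L$ is now finite over $K$; this is the reduction to the closed-point case. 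The assignment $Z_i \mapsto z_i$ with $Z_d \mapsto f$ defines a $K$-algebra map $K[Z_1,\ldots,Z_d] \to R$ that is injective (the $z_i$ are algebraically independent over $K$ because $\mbox{tr.deg}_K \mbox{Frac}(R) = d$) and whose contraction of $\mm$ is exactly $(\phi(Z_1), Z_2, \ldots, Z_d)$; localizing produces the required subring
\[
R' = K[Z_1,\ldots,Z_d]_{(\phi(Z_1),Z_2,\ldots,Z_d)} \subset R,
\]
a regular $K$-spot of the form demanded in $(i)$, with $Z_d = f$.

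By construction $R' \to R$ is a local homomorphism of regular local rings of the same dimension $d$ inducing an isomorphism on residue fields, and $\mm' R = \mm$ since $\phi(z_1), z_2, \ldots, z_d$ is a regular system of parameters. Hence the map is flat (miracle flatness), unramified, and therefore \'etale, giving an isomorphism of completions $\hat{R'} \iso \hat{R}$. The final step is to manufacture an honest element $h \in gR \cap R'$ realizing an analytic isomorphism in the sense of Definition \ref{ai}, i.e. with $R'/hR' \iso R/hR$. Since $R$ is essentially of finite type and \'etale over $R'$ with trivial residue extension, it is a pointed \'etale neighborhood $R = (R'[\eta])_Q$ with $\eta$ a simple root modulo $\mm'$ of a monic polynomial over $R'$; the conductor-type ideal of those $h$ for which $\eta$, and hence all of $R$, already lies in $R'$ modulo $h$ is nonzero, and I would intersect it with the nonzero ideal $gR \cap R'$. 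As $R$ and $R'$ are domains and $g$ is a nonzerodivisor (it begins a regular sequence), any nonzero $h$ in this intersection is a nonzerodivisor in both rings and yields $R'/hR' \iso R/hR$, which is $(ii)$.

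I expect the main obstacle to be exactly this last step: upgrading the formal statement $\hat{R'} \iso \hat{R}$ to a genuine analytic isomorphism along a single element $h$ that can moreover be forced into $gR$. Controlling the conductor, verifying that it meets $gR \cap R'$ nontrivially, and keeping $h$ a nonzerodivisor is where the regular-sequence hypothesis on $(g,f)$ and the \'etale structure of $R' \to R$ must be used with care; the field-theoretic setup and the coordinate bookkeeping of the first two paragraphs are routine by comparison.
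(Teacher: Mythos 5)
The paper itself offers no proof of this proposition: it is quoted from Nashier (\cite{N}, Theorem 2.8) and Bhatwadekar--Rao (\cite{BR}, p.~803), so the comparison is with the method of those sources. Your first two paragraphs do reproduce the standard Lindel--Nashier opening correctly: perfectness of $k$ gives a separating transcendence basis, hence $K$, the primitive element $\theta$, and the separable monic $\phi$; part $(iii)$ is fine; and the subring $R'$ you build is indeed \'etale in $R$ with $\mm'R=\mm$ and trivial residue extension, hence $\widehat{R'}\iso\widehat{R}$. (One glitch is fixable: ``the $z_i$ are algebraically independent over $K$ because $\mbox{tr.deg}_K\,\mbox{Frac}(R)=d$'' is not an argument --- no particular $d$ elements need be independent; instead observe that $\phi(z_1),z_2,\ldots,z_d$ is a system of parameters of a local ring containing $K$, hence algebraically independent over $K$, and $z_1$ is algebraic over $K(\phi(z_1))$. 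Also note injectivity of $R'/hR'\ra R/hR$ is automatic from faithful flatness of $R'\ra R$, so the whole content of $(ii)$ is the surjectivity $R=R'+hR$.)

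The genuine gap is your last step, and it is not merely a missing verification: the order of quantifiers in your plan is wrong. The set of $h$ along which $R'\subset R$ is an analytic isomorphism is not an ideal (it is stable under powers, not under multiplication by ring elements), so ``intersect the conductor-type ideal with $gR\cap R'$'' does not parse; worse, for the $R'$ your recipe produces there may exist \emph{no} admissible $h\in gR\cap R'$ at all. Concretely, let char $k\neq 2$, $R=k[y,t]_{(y,t-1)}$, $f=t^2-1$ (a regular parameter, since $t+1$ is a unit) and $g=y$, so $(g,f)$ is a regular sequence. Here $L=k$, so $K=k$ and $\phi$ is linear, and nothing in your construction forbids the lift $z_1=y$, $z_2=f$; this yields $R'=k[x,y]_{(x,y)}$ with $x=t^2-1$, which satisfies everything your first paragraphs establish. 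But $gR\cap R'=yR'$ by flatness, so any candidate $h$ lies in $yR$, and $R\neq R'+yR$: reducing modulo $y$, the image of $R'$ in $R/yR=k[t]_{(t-1)}$ consists of rational functions in $t^2-1$, and $t\notin k(t^2)$ (odd versus even), so $t\notin R'+hR$ for every such $h$. Thus surjectivity fails identically, and no $h$ as in $(ii)$ exists for this $R'$. The actual proof of Nashier's Theorem 2.8 (via Lindel's techniques) must therefore entangle the choice of the lifts $z_i$ --- equivalently of the \'etale presentation $R=(R'[\eta])_Q$ with $\eta$ integral, $R'[\eta]$ finite over $R'$ --- with the given element $g$ from the outset, arranging a specific conductor element $h\in gR\cap R'$ with $R=R'+hR$ while keeping $Z_d=f$ (this is where the hypothesis that $(g,f)$ is a regular sequence earns its keep). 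That construction is exactly the part your proposal leaves open, as you yourself anticipated; the field-theoretic setup you carried out is the routine portion of the cited argument.
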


We state a result due to Keshari (\cite{K09}, Lemma 3.3).

\begin{lemma}\label{K3.3}
Let $A$ be a ring and $P$ a projective $A$-module of rank $r$. Choose
$s\in A$ satisfying the properties of (\ref{L1.1}). Assume that $R^r$
is cancellative, where $R=A[X]/(X^2-s^2X).$ Then every element of
$\Um^1(A\op P, s^2A)$ can be taken to $(1,0)$ by some element of
$Aut(A\op P)$.
\end{lemma}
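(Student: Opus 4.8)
\medskip
\emph{Proof idea.} The plan is to use the auxiliary ring $R=A[X]/(X^2-s^2X)$, whose cancellation property is the hypothesis, as a bridge between the element $(a,p)$ and $(1,0)$. Observe first that $R$ is free of rank two over $A$ with basis $1,X$, that it carries two $A$-algebra augmentations $\epsilon_0\colon X\mapsto 0$ and $\epsilon_1\colon X\mapsto s^2$ (both well defined, since $0$ and $s^2$ satisfy $z^2=s^2z$), and that inverting $s$ splits $R_s\iso A_s\times A_s$ by the Chinese remainder theorem, because $X$ and $X-s^2$ become comaximal once $s$ is a unit. The point of $\epsilon_1$ is that it base-changes $P\otimes_AR$ back to $P$, so any $R$-automorphism of $R\op(P\otimes_AR)$ specializes under $\epsilon_1$ to an $A$-automorphism of $A\op P$; this will be how we descend at the end.

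Write $a=1+s^2t$. The first step is a normalization. Since $1=a-s^2t$, the ideal $(a,s^2)$ is all of $A$, hence $P=aP+s^2P$ and we may write $p=am+s^2p'$ for suitable $m,p'\in P$. Applying the transvection $\Gamma_{-m}$ of $A\op P$ replaces $(a,p)$ by $(a,s^2p')$, so without loss of generality $p=s^2p'\in s^2P$. Next I would lift $(a,s^2p')$ over $R$ by setting $V=(1+Xt,\;X(p'\otimes 1))\in R\op(P\otimes_AR)$, so that $\epsilon_0(V)=(1,0)$ and $\epsilon_1(V)=(1+s^2t,\;s^2p')=(a,p)$. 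That $V$ is unimodular over $R$ is checked locally: where $s$ is invertible, $R_s\iso A_s\times A_s$ and the two components of $V$ are the unimodular elements $(1,0)$ and $(a,p)$; where $s$ is not invertible, $X$ is nilpotent modulo $s$, so the first coordinate $1+Xt$ is a unit in the corresponding local rings. Thus $V\in\Um(R\op(P\otimes_AR))$.

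The heart of the argument, and the step I expect to be the main obstacle, is to carry $V$ to $(1,0)$ by some $\Sigma\in\Aut_R(R\op(P\otimes_AR))$. Here the cancellation of $R^r$ enters: for the \emph{free} module it yields that $\GL_{r+1}(R)$ (indeed $E_{r+1}(R)$) acts transitively on $\Um_{r+1}(R)$. The difficulty is that $P\otimes_AR$ need not be free over $R$ (its $\epsilon_i$-specializations equal $P$, possibly nonfree), so cancellation of $R^r$ does not apply verbatim. To bridge the gap I would invoke the near-isomorphism of \ref{L1.1}: the maps $\theta=(\phi_1,\dots,\phi_r)\colon P\to A^r$ and $\delta\colon A^r\to P,\ e_i\mapsto p_i$, satisfy $\theta\delta=sI_r$ and $\delta\theta=s\cdot\mathrm{id}_P$ (the latter exactly because $s$ is a nonzerodivisor up to the torsion controlled by $(0:sA)=(0:s^2A)$), and become mutually inverse isomorphisms after inverting $s$. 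Base-changing $\theta,\delta$ to $R$ and working over $R_s\iso A_s\times A_s$, where $P\otimes_AR$ is free and $V$ splits into two completable unimodular rows, one transports the $\GL_{r+1}(R_s)$-triviality back along $\delta$ and patches it with the trivial behaviour modulo $s$ (where $1+Xt$ is already a unit and $V\equiv(1,0)$) to build the global $\Sigma$. Controlling the $s$-denominators introduced by $\theta,\delta$ and ensuring that the patched map is a genuine automorphism of $R\op(P\otimes_AR)$, not merely of a localization, is precisely where the hypotheses on $s$ and the cancellation of $R^r$ must be combined; this is the crux.

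Finally I would descend. Applying $\epsilon_1$ to $\Sigma$ gives an $A$-automorphism $\epsilon_1(\Sigma)$ of $A\op P$, and since $\epsilon_1(V)=(a,s^2p')$ while $\epsilon_1(1,0)=(1,0)$, we get $\epsilon_1(\Sigma)(a,s^2p')=(1,0)$. Composing $\epsilon_1(\Sigma)$ with the transvection $\Gamma_{-m}$ of the normalization step produces an element of $\Aut(A\op P)$ carrying the original $(a,p)$ first to $(a,s^2p')$ and then to $(1,0)$, which is the assertion.
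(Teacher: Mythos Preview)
The paper does not supply a proof of this lemma; it is quoted from \cite{K09}, Lemma~3.3, so there is no in-paper argument to compare against directly.

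Your overall architecture---normalize so that $p\in s^2P$, lift $(a,p)$ to an element $V=(1+Xt,\,Xp')$ over $R=A[X]/(X^{2}-s^{2}X)$ interpolating between $(1,0)$ at $X=0$ and $(a,p)$ at $X=s^{2}$, produce an $R$-automorphism carrying $V$ to $(1,0)$, then specialize via $\epsilon_{1}$---is precisely the mechanism behind the cited result, and your normalization step and unimodularity check for $V$ are correct.

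The step you yourself flag as ``the crux'' is, however, a genuine gap in your write-up. You have only the hypothesis that $R^{r}$ is cancellative, yet you need to move $V\in\Um(R\oplus(P\otimes_{A}R))$, and $P\otimes_{A}R$ need not be free. Your proposed route---work over $R_{s}\cong A_{s}\times A_{s}$ where $P\otimes_{A}R$ becomes free, then patch with the behaviour modulo $s$---does not explain how the two local pieces glue to an honest automorphism over $R$ rather than merely over $R_{s}$, nor how the $s$-denominators introduced by $\theta,\delta$ are absorbed. As stated this is a plan, not a proof. The argument in \cite{K09} does not proceed by patching over $R_s$; instead one uses the Lindel data $p_{i},\phi_{i}$ at the level of $A$ to replace $(a,p)$ by an honest unimodular row of length $r+1$ (writing $p=\sum c_ip_i$ with $c_i\in sA$), lifts that row to $R^{r+1}$, applies the cancellation hypothesis on the \emph{free} module $R^{r}$ exactly once, and then transports the resulting completion back to $\Aut(A\oplus P)$ via $\delta,\theta$, the extra factors of $s$ being absorbed because the relevant off-diagonal entries already lie in $sA$. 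So your identification of the difficulty is accurate and your framework is the right one, but the resolution you outline is not the one used and, as written, does not close the gap.

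A minor point: your claim that $\delta\theta=s\cdot\mathrm{id}_P$ holds on the nose requires $s$ to be a non-zerodivisor in $A$, whereas \ref{L1.1} only guarantees this in $A_{\mathrm{red}}$; in applications one first reduces to $A$ reduced, so this is harmless, but it should be said.
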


We end this section with following result of Bhatwadekar-Rao
(\cite{BR}, Proposition 3.7).

\begin{proposition}\label{p3.7}
Let $B$ be a reduced ring of dimension $d$ and $R$ an overring of
$B[X]$ contained in its total quotient ring. Let
$A=R[X_1,\ldots,X_n,Y_1^{\pm 1},\ldots,Y_m^{\pm 1}]$.  Then $A^r$ is
cancellative for $r\geq d+1$.
\end{proposition}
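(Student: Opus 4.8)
The plan is to reduce cancellation of $A^r$ to a transitivity statement for unimodular rows and then to use the overring structure of $R$ to lower the \emph{effective} dimension from $\dim R=d+1$ to $d$. Recall that $A^r$ is cancellative as soon as $E_{r+1}(A)$ acts transitively on $\Um_{r+1}(A)$: if $Q\op A\cong A^{r+1}$, then $Q$ is the kernel of a surjection given by a unimodular row of length $r+1$, and carrying that row to $e_1=(1,0,\ldots,0)$ by an elementary transformation splits off a free $A^r$, giving $Q\cong A^r$. So it suffices to prove that every $v\in\Um_{r+1}(A)$ is $E_{r+1}(A)$-equivalent to $e_1$ for $r\ge d+1$, i.e.\ for rows of length $\ge d+2$. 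Since a given $v$ involves only finitely many elements of $R$, and $R$ lies in the total quotient ring of $B[X]$, I would first replace $R$ by a finitely generated overring of $B[X]$ and clear denominators: there is a single non-zerodivisor $h\in B[X]$ with $R_h=B[X]_h$, and $h$ is a non-zerodivisor of $R$ (note $R$ is reduced, being a subring of the reduced ring $\mathrm{Tot}(B[X])=\prod_i\mathrm{Frac}(B/\p_i)(X)$).

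The key numerical input comes from reduction modulo $h$. Because $B$ is reduced, the associated primes of $B[X]$ are its minimal primes $\p_i[X]$, each of coheight $\le d+1$, and a non-zerodivisor $h$ lies in none of them; hence $V(h)$ contains no irreducible component of $\Spec B[X]$ and $\dim B[X]/hB[X]\le \dim B[X]-1=d$. As $R$ is reduced of dimension $\le d+1$ (it shares the localization $B[X]_h$ with $B[X]$) and $h$ lies in no minimal prime of $R$, the same count gives $\dim(R/hR)\le d$. Over the quotient ring $\ol A=(R/hR)[X_1,\ldots,X_n,Y_1^{\pm1},\ldots,Y_m^{\pm1}]$ I can therefore invoke Wiemers' transitivity (\ref{w1}), in its relative form with $c=1$ after normalizing the leading coordinate: since the base has dimension $\le d$ and the rows have length $r+1\ge d+2$, the elementary group acts transitively (the finitely many cases of small $d$ and small $r$ falling outside $\max\{3,d+2\}$ are disposed of directly by Bass's cancellation theorem). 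Thus the reduction $\ol v$ can be carried to $\ol{e_1}$, and by the lifting Lemma \ref{rem2.1} there is an elementary transformation of $A$ taking $v$ to some $v'$ with $v'\equiv e_1\pmod{hA}$.

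It remains to move $v'$, already trivial modulo $h$, to $e_1$ over all of $A$. Here I would patch the two charts $A_h=B[X]_h[X_1,\ldots,X_n,Y_1^{\pm1},\ldots,Y_m^{\pm1}]$ and $A/hA$ using the fiber-product square attached to the overring along $h$ (\ref{ai}), which identifies data over $A$ with compatible data over $A_h$ and over $A/hA$ glued along $A_h/hA_h$. On the localization chart one needs transitivity of the elementary group on $\Um_{r+1}(A_h)$ at the \emph{same} level $r+1\ge d+2$. This is exactly where the distinguished variable $X$ is essential: $B[X]_h$ is a localization of the polynomial ring $B[X]$, and the polynomial variable supplies one extra unit of ``free'' dimension, lowering the required length from the naive bound $\dim B[X]_h+2=d+3$ down to $d+2$. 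I would realize this improvement through Suslin's monic-polynomial and local-global technique in $X$, using that the generic fibre $\mathrm{Tot}(B[X])=\prod_i\mathrm{Frac}(B/\p_i)(X)$ is a finite product of fields and hence as simple as possible.

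The main obstacle is precisely this last step. Establishing the improved transitivity over the localization chart $A_h$ at level $d+2$, and gluing it compatibly with the mod-$h$ triviality across the \emph{non-localization} overring $R$, is the delicate heart of the argument: one must control the elementary group simultaneously over $A_h$ and over the $\le d$-dimensional quotient $A/hA$, and verify that the gluing produces a genuine element of $E_{r+1}(A)$ rather than merely of $\GL_{r+1}(A)$. Everything preceding it is bookkeeping arranged so that the dimension count lands at $r\ge d+1$ instead of the unconditional Bass bound $\dim A+1$.
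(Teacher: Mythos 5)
You have located the gap yourself, and it is real: everything up to your ``Step 3'' is standard bookkeeping, while the step you defer --- trivializing a row over the chart $A_h$ at length $d+2$ and gluing --- is the entire content of the proposition, and as architected it would not go through. Two concrete problems. First, the square you invoke is not the one in (\ref{ai}): that definition patches two rings $R\subset S$ with $R/hR\iso S/hS$, whereas the triple $(A,\,A_h,\,A/hA)$ is not a Milnor square at all, since $A_h/hA_h=0$. Gluing a row congruent to $e_1$ mod $h$ with an elementary trivialization over $A_h$ is done not by a fiber product of modules but by splitting the elementary matrix itself, via the Quillen--Vaserstein dilation trick (produce $\sigma(T)\in E_{r+1}(A_h[T])$ with $\sigma(0)=\mathrm{id}$ and $\sigma(1)$ the given matrix, then evaluate at $h^{2N}T$ to descend); this is exactly the device the paper uses in (\ref{prop3.5}) via (\cite{DK}, Lemma 3.2), and it is absent from your sketch. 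Second, and more seriously, the transitivity you require over $A_h=B[X]_h[X_1,\ldots,X_n,Y_1^{\pm1},\ldots,Y_m^{\pm1}]$ at length $d+2$ is a nontrivial unproved claim: once $h=h(X)$ is inverted, $X$ is no longer a polynomial variable over a $d$-dimensional base, so Wiemers (\ref{w1}) only gives the bound $\dim B[X]_h+2$, which can equal $d+3$; your appeal to ``Suslin's monic-polynomial and local-global technique'' to recover the lost dimension is an assertion, not an argument.

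Note also that the paper itself contains no proof of (\ref{p3.7}); it imports the result from (\cite{BR}, Proposition 3.7). The standard way to close your gap --- consistent with the machinery the paper deploys in (\ref{prop3.5}) and (\ref{tt}) --- reverses the order of your two charts. Localize first at the set $S$ of \emph{all} non-zerodivisors of $B[X]$: since $B$ is reduced, $S^{-1}A=\mathrm{Tot}(B[X])[X_1,\ldots,X_n,Y_1^{\pm1},\ldots,Y_m^{\pm1}]$ is a polynomial--Laurent ring over a finite product of fields, where elementary transitivity holds already at length $3$ (Suslin--Vaserstein, or via freeness as in (\ref{g})). Hence there is a single non-zerodivisor $s\in B[X]$, which you may take divisible by your $h$, and $\sigma\in E_{r+1}(A_s)$ with $\sigma v=e_1$. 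Now run your mod-$h$ step with $s^{2N}$ in place of $h$ --- your dimension count $\dim(R/s^{2N}R)\le d$ is correct, though $\dim R\le d+1$ deserves the dimension inequality for the finitely generated extension $B[X]\subset R'\subset\mathrm{Tot}(B[X])$ rather than the one-line ``shares the localization'' remark --- to arrange $v\equiv e_1 \bmod\, s^{2N}A$ using (\ref{w1}) and the lifting (\ref{rem2.1}), and conclude by the dilation lemma over $A_s$. With this rearrangement no ``improved transitivity over $A_h$'' is ever needed; without it, your final step is unsubstantiated. (A minor additional slip: the small cases you dispose of ``directly by Bass'' do not follow from Bass --- for $d=0$, $r=1$ the rank is far below $\dim A+1$; use instead that a stably free module of rank one is free by the determinant argument.)
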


%%%%%%%%%%%%%%%%%%%%%%%%%%%%%%%%%%%%%%%%%%%%%%%%%%%%%%%%%%%%%%%%

\section{Cancellation over overrings of polynomial rings}

In this section, we prove our first result (\ref{p33}).  We begin with
the following:

\begin{proposition}{\label{prop3.5}}
Let $A=R[Y_1,\ldots, Y_m,
  f_1(l_1)^{-1},\ldots, f_n(l_n)^{-1}]$ be a ring of type $R[d, m,
n]$. If $s\in R$ and $r\geq {\rm max}\{3, d+2\},$ then
$E^{1}_{r}(A, sA)$ acts transitively on $\Um^{1}_{r}(A, sA)$.
\end{proposition}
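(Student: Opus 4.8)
The plan is to prove this by reduction to the polynomial/Laurent-polynomial case handled by Wiemers (Proposition~\ref{w1}), using a localization and patching argument together with the structure of rings of type $R[d,m,n]$. The main point is that a ring $A$ of type $R[d,m,n]$ is built from a polynomial (or Laurent-type) ring by inverting the elements $f_i(l_i)$, and I would like to control what happens after inverting these.

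First I would reduce to the case where the element $s$ behaves well. Given a unimodular row in $\Um^1_r(A,sA)$, I want to act on it by $E^1_r(A,sA)$ and steer it to a standard form. The key structural fact is that, since each $l_i = \sum_j a_{ij}Y_j - b_i$ with coefficient vector in $k^m - (0)$ (or $l_i = Y_{i_j}$), a linear change of the variables $Y_1,\ldots,Y_m$ over $k$ (hence over $R$) can be performed; such a change is an $R$-algebra automorphism of the polynomial ring and carries the linear forms to new coordinates. So after a suitable change of variables I may assume the linear forms are arranged conveniently — for instance that some $l_i$ becomes a single variable. This lets me realize $A$ as a localization of a polynomial ring $R[Y_1,\ldots,Y_m]$ in a way where at least one of the inverted elements is a variable $Y_s$, matching the shape of Proposition~\ref{w1} (which requires $c \in \{1, X_r, Y_s-1\}$).

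The technical heart will be the passage from Proposition~\ref{w1} for $B[X_1,\ldots,X_r, Y_1^{\pm 1},\ldots,Y_s^{\pm 1}]$ to the ring $A$, because $A$ is \emph{not} literally a Laurent polynomial ring: we invert $f_i(l_i)$ rather than the variables themselves. I would handle this by a limit/specialization argument. The idea is that inverting $f_i(l_i)$ can be approximated or dominated by inverting $l_i$ together with the content of $f_i$; more precisely, I expect to induct on $n$, the number of inverted elements, peeling off one $f_n(l_n)^{-1}$ at a time. For the inductive step I set $A' = R[Y_1,\ldots,Y_m, f_1(l_1)^{-1},\ldots,f_{n-1}(l_{n-1})^{-1}]$, so that $A = A'[f_n(l_n)^{-1}] = A'_{f_n(l_n)}$, and I need that transitivity of $E^1_r$ on $\Um^1_r$ descends through the localization at the single element $f_n(l_n)$. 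Since dimension is unchanged under such localization and the rank bound $r \geq \max\{3,d+2\}$ is preserved, the induction stays in range, with the base case $n=0$ being exactly the polynomial ring $R[Y_1,\ldots,Y_m]$, to which Proposition~\ref{w1} (with $c=1$, $s=0$ case absorbed, or after adjusting to the Laurent setting) applies.

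The hard part, and where I expect the real work to lie, is the localization-descent step: showing that if $E^1_r$ acts transitively on $\Um^1_r$ over $A'$ (or over an appropriate Laurent-type extension), then the same holds over $A'_{f_n(l_n)}$. Inverting a single non-unit element can genuinely obstruct elementary transitivity, so I cannot invoke a naive functoriality. I anticipate needing to lift a unimodular row over $A'_{f_n(l_n)}$ back to $A'$ up to the ideal $sA$, apply transitivity there, and then push the resulting elementary transformation forward — using Lemma~\ref{rem2.1} for the surjectivity of elementary groups modulo ideals, and using the freedom in the choice of $s$ together with the regular-parameter structure to absorb denominators. Controlling these denominators $f_n(l_n)^N$ while staying inside $E^1_r(A, sA)$ (rather than a larger group) is the delicate estimate; it is precisely here that the hypothesis $s \in R$ is used, since $s$ is a denominator-free scalar and the ideal $sA$ is stable under the localizations in play. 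I would close the argument by combining this descent with the variable-change normalization so that at each stage the relevant form $l_i$ or $f_i(l_i)$ matches one of Wiemers' admissible choices of $c$.
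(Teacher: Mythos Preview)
Your outline has the right skeleton --- induction on $n$, linear change of variables to normalize $l_n$ to a coordinate, base case $n=0$ via Wiemers --- and this matches the paper.  The gap is in the inductive step.  You propose to pass from transitivity over $A' = R[Y_1,\ldots,Y_m,f_1(l_1)^{-1},\ldots,f_{n-1}(l_{n-1})^{-1}]$ directly to transitivity over $A = A'_{f_n(l_n)}$ by ``lifting a unimodular row over $A'_{f_n}$ back to $A'$'' and pushing an elementary transformation forward.  But a unimodular row over a localization does not in general lift to a unimodular row over the base ring, and there is no reason an element of $E^1_r(A',sA')$ should act usefully on a row that only exists over $A'_{f_n}$.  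Controlling denominators by hand, as you suggest, does not give a mechanism for this.

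What the paper actually does after setting $l_n = Y_m$ is to introduce a \emph{second} localization, at the multiplicative set $S = 1 + f_n(Y_m)R[Y_m]$, which is comaximal with $f_n(Y_m)$.  The point is that $B = R[Y_m]_{f_n S}$ has dimension $d$ (not $d+1$), so $A_S$ is of type $B[d,m-1,n-1]$ and induction gives transitivity there, hence over $A_g$ for some $g \in S$.  Now $f_n$ and $g$ are comaximal, so one has a Zariski cover of $C := A'$ by $C_{f_n} = A$ and $C_g$.  The elementary matrix over $A_g = C_{f_n g}$ is split (via \cite{DK}, Lemma 3.2) as a product coming from $C_{f_n}$ and $C_g$, and the resulting unimodular rows patch to one over $C$, where induction on $n$ finishes.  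This comaximal-localization-plus-patching step is the essential idea your proposal is missing.
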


\begin{proof}
By (\cite{DK}, Lemma 3.1), we may assume that $R$ is reduced. The case
$n=0$ follows from (\ref{w1}). Assume $n>0$ and use induction on
  $n$. The case each $l_j=Y_{i_j}$ is proved in (\cite{DK},
  Proposition 3.5). We will prove the other case.

Let $(a_1,\ldots,a_r)\in \Um_r^1(A,sA)$. Recall that
$l_n=a_{n1}Y_1+\ldots +a_{nm}Y_m-b_n$ with $(a_{n1},\ldots,a_{nm})\in
k^m-(0)$ and $b_n\in R$.  We can find $\theta\in E_{m}(k)$ such that
$\theta(a_{n1},\ldots,a_{nm})=(0,\ldots,0,1)$.  Replacing the
variables $(Y_1,\ldots,Y_m)$ by $\theta(Y_1,\ldots,Y_m)$, we may
assume that $l_n=Y_m-b_n$. Further replacing $Y_m$ by $Y_m+b_n$, we
may assume that $l_n=Y_m$. 

Let $S=1+f_n(Y_m)R[Y_m]$. Then
$A_S=B[Y_1,\ldots,Y_{m-1},f_1(l_1)^{-1},\ldots,f_{n-1}(l_{n-1})^{-1}]$,
where $B=R[Y_m]_{f_n(Y_m)S}$ is a ring of dimension $d$,
$l_i=\sum_{j=1}^{m-1}a_{ij}Y_j+\wt b_i$ with $\wt b_i=a_{im}Y_m-b_i
\in B$. Hence $A_S$ is of type $B[d,m-1,n-1]$. By induction on $n$, we
can find $\sigma\in E_r^1(A_S,sA_S)$ such that $\sigma
(a_1,\ldots,a_r)=(1,0,\ldots,0)$. We can find $g=1+f_n(Y_m)h(Y_m)\in
S$ and $\sigma'\in E_r^1(A_g,sA_g)$ such that $\sigma'
(a_1,\ldots,a_r)=(1,0,\ldots,0)$.  Rest of the proof is similar to
(\cite{DK}, Proposition 3.5). Hence we only give a sketch.

Let $C=R[Y_1,\ldots,Y_m,f_1(l_1)^{-1},\ldots,f_{n-1}(l_{n-1})^{-1}]$
be a ring of type $R[d,m,n-1]$.  Consider the fiber product diagram
\[
\xymatrix{
C \ar@{->}[r]
     \ar@{->}[d]
& A=C_{f_n(Y_m)} 
     \ar@{->}[d] 
\\
C_{g(Y_m)}   
\ar@{->}[r]
     &A_{g(Y_m)}=C_{g(Y_m)f_n(Y_m)}.     
}
\]
By (\cite{DK}, Lemma 3.2), there exist $\sigma_1 \in
E_{r}^1(C_{f_n},s)$ and $\sigma_2 \in \SL_{r}^1(C_g,s )$ such that
$\sigma'$ has a splitting $\sigma' =(\sigma_2)_{f_n}\circ
(\sigma_1)_{g} $.  Patching unimodular elements $\sigma_1(a_1,\ldots,a_{r}) \in
\Um^1_r(C_{f_n},s)$ and $(\sigma_2)^{-1} (1,0,\ldots,0) \in
\Um^1_r(C_g,s)$, we get $(c_1,\ldots,c_{r}) \in \Um_{r}^1(C,s)$. By
induction on $n$, there exist $\phi \in E_{r}^1(C,s)$ such that $\phi
(c_1,\ldots,c_{r}) = (1,0,\ldots,0)$. Taking projection of $\phi$ in $A$, we
get $\Phi \in E_{r}^1(A,s)$ such that $\Phi\sigma_1(a_1,\ldots,a_{r})
= (1,0,\ldots,0)$. This completes the proof.  $ \hfill \gj$
\end{proof}
\medskip

As a consequence of (\ref{prop3.5}), we get the following:

\begin{proposition}
Let $A=R[Y_1,\ldots, Y_m,
  f_1(l_1)^{-1},\ldots, f_n(l_n)^{-1}]$ be a ring of type $R[d, m, n]$. Then

$(i)$ the canonical map $\Phi_r:\GL_r(A)/E_r(A)\ra K_1(A)$ is
surjective for $r\geq {\rm max}\{2,d+1\}$.

$(ii)$ Further assume $f_i(T)\in R[T]$ is monic polynomial, $n\leq m$ and
$l_i\in k[Y_1,\ldots,Y_i]$ with $a_{ii}\not=0$ (see \ref{d}). Then for
$r\geq {\rm max} \{3,d+2\}$, any stably elementary matrix in
$\GL_r(A)$ is in $E_r(A)$. In particular, $\Phi_{d+2}$ is an
isomorphism.
\end{proposition}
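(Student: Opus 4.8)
The plan is to deduce both parts from Proposition~\ref{prop3.5}, which provides transitivity of $E^1_r(A,sA)$ on $\Um^1_r(A,sA)$ for every $s\in R$ and $r\geq\max\{3,d+2\}$, together with the identity $E^1_r(A,A)=E_r(A)$ and standard $K_1$-theoretic bookkeeping. For part $(i)$, surjectivity of $\Phi_r$ is the statement that every class in $K_1(A)=\GL(A)/E(A)$ is represented by a matrix of size $r$. Since $A$ is a ring of dimension $d$ (its Krull dimension agrees with that of $R[Y_1,\ldots,Y_m]$ localized, which is $d+m$ — but the relevant stable-range parameter here is governed by $\max\{2,d+1\}$ via the base ring $R$), the key input is a stable-range–type cancellation: I would invoke the transitivity of the elementary action on unimodular rows (the case $s=1$ of Proposition~\ref{prop3.5}, or its $r=\max\{2,d+1\}$ refinement) to show that the stabilization map $\GL_r(A)/E_r(A)\to\GL_{r+1}(A)/E_{r+1}(A)$ is surjective for $r\geq\max\{2,d+1\}$. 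Passing to the colimit identifies the target with $K_1(A)$ and yields surjectivity of $\Phi_r$.

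For part $(ii)$, the goal is to show that any stably elementary matrix $\alpha\in\GL_r(A)$ — meaning $\alpha\oplus 1\in E_{r+1}(A)$ — already lies in $E_r(A)$, for $r\geq\max\{3,d+2\}$. Here the additional hypotheses ($f_i$ monic, $n\leq m$, $l_i\in k[Y_1,\ldots,Y_i]$ with $a_{ii}\neq 0$) are exactly what is needed to run an induction on $n$ that mirrors the proof of Proposition~\ref{prop3.5}: the triangular structure $l_i\in k[Y_1,\ldots,Y_i]$ lets one successively change variables (via an $E_m(k)$ substitution and a shift) so that the top inverted linear form becomes $l_n=Y_m$, reducing $A$ along $S=1+f_n(Y_m)R[Y_m]$ to a ring of type $B[d,m-1,n-1]$ with $f_i$ still monic. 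I would first reduce $\alpha$ modulo $E_r(A)$ to a diagonal/normal form using the transitivity on $\Um^1_r(A,sA)$ applied to its first column, then peel off inverted variables one at a time using the same fiber-product patching diagram $C\to C_{f_n(Y_m)}=A$ as in Proposition~\ref{prop3.5}. The monicity of $f_n$ guarantees that the analytic-isomorphism/patching machinery applies and that elementary matrices split correctly across the Milnor square. The base case $n=0$ reduces to the known statement for $A=R[X_1,\ldots,X_r,Y_1^{\pm 1},\ldots,Y_s^{\pm 1}]$, where stably elementary matrices of size $\geq\max\{3,d+2\}$ are elementary (a Suslin–Wiemers type result, available via Proposition~\ref{w1} and its consequences).

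The main obstacle I anticipate is the splitting/patching step in part $(ii)$: given a stably elementary $\alpha\in\GL_r(A)$, one must realize it — after multiplication by an element of $E_r(A)$ — as the image of a stably elementary matrix over $C$ (the ring with one fewer inverted form), and this requires decomposing the relevant automorphism as a product $(\sigma_2)_{f_n}\circ(\sigma_1)_g$ with $\sigma_1\in E^1_r(C_{f_n},\cdot)$ and $\sigma_2\in\SL^1_r(C_g,\cdot)$, exactly as in (\cite{DK}, Lemma 3.2). Controlling that the ``$\SL$-part'' $\sigma_2$ does not obstruct the descent — i.e.\ that the stably-elementary hypothesis is inherited by the patched matrix over $C$ — is the delicate point, since $\SL_r\neq E_r$ in general. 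Once the induction closes, the final clause ``$\Phi_{d+2}$ is an isomorphism'' follows by combining surjectivity from part $(i)$ (at $r=d+2\geq\max\{2,d+1\}$) with injectivity, which is precisely the vanishing of the kernel $\SK_1$-style term: a matrix in $\GL_{d+2}(A)$ mapping to $1$ in $K_1(A)$ is stably elementary, hence elementary by part $(ii)$, hence trivial in $\GL_{d+2}(A)/E_{d+2}(A)$.
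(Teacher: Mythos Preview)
Your approach matches the paper's: part $(i)$ is the standard surjective-stabilization argument driven by transitivity of $E_r$ on unimodular rows (Proposition~\ref{prop3.5} with $s=1$), and part $(ii)$ is an induction on $n$ that localizes away the top inverted form and then patches, with the base case and the endgame both deferred to (\cite{DK}, Theorem~3.8).

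One concrete slip is worth fixing. In the inductive step you invoke ``an $E_m(k)$ substitution'' to make $l_n=Y_m$, as in the proof of Proposition~\ref{prop3.5}. That substitution mixes all the variables and will in general destroy the triangular hypothesis $l_i\in k[Y_1,\ldots,Y_i]$ for $i<n$, which you must preserve to reapply the induction hypothesis to $A_S$. The paper instead uses only the hypothesis $a_{nn}\neq 0$ and replaces $Y_n$ alone by $a_{nn}^{-1}(Y_n-a_{n1}Y_1-\cdots-a_{n,n-1}Y_{n-1})+b_n$, obtaining $l_n=Y_n$; since $l_1,\ldots,l_{n-1}$ do not involve $Y_n$, they are untouched, and after passing to $S=1+f_n(Y_n)R[Y_n]$ and $B=R[Y_n]_{f_nS}$ the ring $A_S$ is again of triangular type $B[d,m-1,n-1]$ with each $f_i$ still monic. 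A second minor point: Proposition~\ref{w1} (Wiemers) gives transitivity on $\Um^1_r$, which is what feeds part $(i)$, but it does not by itself give ``stably elementary $\Rightarrow$ elementary'' for the base case of $(ii)$; that injective-stabilization statement is exactly what (\cite{DK}, Theorem~3.8) supplies, and the paper's patching step after obtaining $M_g\in E_r(A_g)$ is likewise taken verbatim from there rather than reargued via the $\SL$--$E$ splitting you describe.
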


\begin{proof}
The proof of $(i)$ is same as (\cite{DK}, Theorem 3.8). For $(ii)$,
let $M\in \GL_r(A)$ be a stably elementary matrix. In case $n=0$ or
each $l_i=Y_i$, the proof follows from (\cite{DK}, Theorem
3.8). Assume $n>0$ and use induction on $n$. Recall that
$l_n=a_{n1}Y_1+\ldots+a_{nn}Y_n-b_n$ with $a_{nn}\not=0$. Changing
$Y_n\mapsto a_{nn}^{-1}(Y_n-a_{n1}Y_1-\ldots -
a_{n-1,n-1}Y_{n-1})+b_n$, we may assume that $l_n=Y_n$. Let
$S=1+f_n(Y_n)R[Y_n]$ and $B=R[Y_n]_{f_nS}$. Then
$A_S=B[Y_1,\ldots,Y_{n-1},Y_{n+1},\ldots,Y_m,f_1(l_1)^{-1},\ldots,f_{n-1}(l_{n-1})^{-1}]$
is a ring of type $B[d,m-1,n-1]$ with $l_i\in k[Y_1,\ldots,Y_i]$ and
$a_{ii}\not=0$. By induction on $n$, $M_S\in E_r(A_S)$. Hence we can
choose $g\in S$ such that $M_g\in E_r(A_g)$.  The remaining proof is
same as (\cite{DK}, Theorem 3.8), hence we omit it.  $\hfill \gj$
\end{proof}
\medskip 

In the following result, (1) will follow from (\ref{g}, \ref{L1.1})
and (2) will follow from (\cite{DK}, Lemma 3.10).

\begin{lemma}{\label{lem3.9}}
Let $A=R[Y_1,\ldots, Y_m, f_1(l_1)^{-1},\ldots, f_n(l_n)^{-1}]$ be a
ring of the type $R[d, m, n]$ and $P$ a projective $A$-module of rank
$r$.  Then there exist an $s\in R$, $p_1,\ldots, p_r\in P$ and
$\phi_1,\ldots,\phi_r\in \hom(P, A)$ such that

(1) $P_s$ is free; 
$(\phi_i(p_j))$ = diagonal $(s,\ldots, s)$;  $sP\subset
p_1A+\ldots+p_rA$; the image of $s$ in $R_{red}$ is a
non-zerodivisor; and $ (0:sR)=(0:s^2R).$
 
(2) Let $(a, p)\in \Um(A\oplus P, sA)$ with
$p=c_1p_1+\ldots+c_rp_r$, where $c_i\in sA$ for all $i$. Assume
there exist $\phi\in E^{1}_{r+1}(A, s)$ such that $\phi(a,
c_1,\ldots, c_r)=(1,0,\ldots, 0)$. Then there exist $\Phi\in
E(A\oplus P)$ such that $\Phi(a, p)=(1, 0)$.
\end{lemma}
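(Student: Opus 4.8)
The plan is to prove the two parts separately, as the statement indicates: part (1) carries all the geometric content, while part (2) is a formal consequence of an earlier cancellation lemma. The crux of (1) is to produce a free-locus generator lying in $R$ rather than merely in $A$, and this is exactly where Gabber's theorem (\ref{g}) is needed; I expect this descent to be the main obstacle, everything else being the routine bookkeeping of Lindel's construction.

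For (1), I would first pass to $R_{red}$ and let $T$ be its set of non-zerodivisors, so that $T^{-1}R_{red}$ is the total quotient ring, a finite product $\prod_i\kappa(\p_i)$ of fields indexed by the minimal primes $\p_i$ of $R$. Base change gives $T^{-1}A_{red}=\prod_i\bigl(A_{red}\otimes_{R_{red}}\kappa(\p_i)\bigr)$, and each factor is a ring of type $\kappa(\p_i)[0,m,n]$; hence by (\ref{g}) every projective module over each factor is free, so $T^{-1}P_{red}$ is free of rank $r$ over $T^{-1}A_{red}$. Clearing denominators in a basis and its dual then shows that $(P_{red})_{s_1}$ is free over $(A_{red})_{s_1}$ for a single $s_1\in T\subset R_{red}$. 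Lifting $s_1$ to $R$ and using that a projective module which is free modulo the nilradical is itself free, I obtain $s_1\in R$ with non-zerodivisor image in $R_{red}$ and with $P_{s_1}$ free. With this particular $s_1$ the construction in (\ref{L1.1}) applies verbatim; since it only inverts and takes powers of the chosen element, the resulting diagonal value $s$ is a power of $s_1$, hence lies in $R$, and it yields $p_i\in P$ and $\phi_i\in\hom(P,A)$ with $(\phi_i(p_j))=\diag(s,\ldots,s)$, with $sP\subset p_1A+\cdots+p_rA$, and with $P_s$ free. Finally, as the chain $(0:s_1^jR)$ stabilizes by Noetherianity, taking a sufficiently high power of $s_1$ throughout arranges both that the image of $s$ in $R_{red}$ is a non-zerodivisor and that $(0:sR)=(0:s^2R)$.

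For (2), the element $s$ together with the data $p_1,\ldots,p_r,\phi_1,\ldots,\phi_r$ produced in (1) are precisely the hypotheses of (\cite{DK}, Lemma 3.10): given $(a,p)\in\Um(A\op P,sA)$ with $p=\sum_i c_ip_i$, $c_i\in sA$, and an elementary $\phi\in E^1_{r+1}(A,s)$ carrying $(a,c_1,\ldots,c_r)$ to $(1,0,\ldots,0)$, that lemma converts $\phi$ into an automorphism $\Phi\in E(A\op P)$ carrying $(a,p)$ to $(1,0)$. I would therefore simply verify these hypotheses and invoke it, using (\ref{bb}) to identify $E^1(A\op P)$ with $E(A\op P)$ when $r\geq2$. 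In summary, the only genuinely new ingredient is the descent of the free locus to $R$ in part (1), which rests on Gabber's theorem; the construction of the $p_i,\phi_i$ and the passage from row operations to module automorphisms are both standard.
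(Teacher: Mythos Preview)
Your proposal is correct and follows exactly the approach the paper indicates: the paper's proof consists of the single sentence ``(1) will follow from (\ref{g}, \ref{L1.1}) and (2) will follow from (\cite{DK}, Lemma 3.10)'', and you have supplied precisely those details --- using Gabber's theorem over the residue fields at the minimal primes of $R$ to force the free-locus generator into $R$, then running Lindel's construction to get $s$ as a power of that element, and finally quoting \cite{DK} for part (2). There is nothing to add.
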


Following is the main result of this section which generalizes 
(\cite{DK}, Theorem 3.12).

\begin{theorem}{\label{p33}}
Let $A=R[Y_{1},\ldots, Y_{m},
  f_1(l_1)^{-1},\ldots, f_n(l_n)^{-1}]$ be a ring of type $R[d,
m, n]$ and $P$ a projective $A$-module of rank $r\geq {\rm max\,} \{2,
d+1\}$. Then $E(A\oplus P)$ acts transitively on
$\Um (A\oplus P)$. In particular, $P$ is cancellative.
\end{theorem}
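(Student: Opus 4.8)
The plan is to reduce the transitivity statement to the elementary-matrix level, where Proposition~\ref{prop3.5} applies, and then transport the conclusion back to $A\op P$ using the dictionary provided by Lemma~\ref{lem3.9}. First I would invoke Lemma~\ref{lem3.9}(1) to fix $s\in R$, generators $p_1,\ldots,p_r\in P$ and maps $\phi_1,\ldots,\phi_r\in\Hom(P,A)$ so that $P_s$ is free, $(\phi_i(p_j))$ is the scalar matrix $s\cdot\mathrm{id}$, $sP\subset\sum p_iA$, the image of $s$ in $R_{red}$ is a non-zerodivisor, and $(0:sR)=(0:s^2R)$. Given an arbitrary $(a,p)\in\Um(A\op P)$, the goal is to move it to $(1,0)$ by an element of $E(A\op P)$; since $E(A\op P)=E^1(A\op P)$ by Theorem~\ref{bb} (using $r\ge 2$), it suffices to produce such a motion inside $E^1(A\op P)$.

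The key reduction is to the subset $\Um^1(A\op P,s^2A)$. I would first use transvections of the form $\Gamma_m$ and $\Delta_{a\varphi}$ to normalize $(a,p)$ modulo $s$: working over $A/sA$ and lifting, one arranges that the first coordinate becomes a unit $\equiv 1$ and then absorbs the $P$-component so that $(a,p)$ lands in $\Um^1(A\op P,sA)$, and after a further standard squaring step in $\Um^1(A\op P,s^2A)$ with $p=\sum c_ip_i$, $c_i\in s^2A$. This is where Lemma~\ref{K3.3} enters: it guarantees, provided $R'^r$ is cancellative for $R'=A[X]/(X^2-s^2X)$, that every element of $\Um^1(A\op P,s^2A)$ can be carried to $(1,0)$ by some element of $\Aut(A\op P)$. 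To verify the hypothesis of Lemma~\ref{K3.3}, observe that $R'=A[X]/(X^2-s^2X)$ is again built from $R$ by adjoining variables and inverting the same $f_i(l_i)$, so it is (essentially) a ring of type $R'[d,m,n]$ over a reduced base of the same dimension; the cancellation $R'^r\iso R'^r$ for free modules of rank $r\ge\max\{2,d+1\}$ then follows from Proposition~\ref{prop3.5} applied to unimodular rows over $R'$, exactly as in the Laurent-polynomial prototype.

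For the complementary step I would feed the normalized row into Lemma~\ref{lem3.9}(2). Once $(a,p)\in\Um(A\op P,sA)$ with $p=\sum c_ip_i$ and $c_i\in sA$, I need $\phi\in E^1_{r+1}(A,s)$ with $\phi(a,c_1,\ldots,c_r)=(1,0,\ldots,0)$; since $(a,c_1,\ldots,c_r)\in\Um^1_{r+1}(A,sA)$ and $r+1\ge\max\{3,d+2\}$, Proposition~\ref{prop3.5} supplies exactly such a $\phi$. Lemma~\ref{lem3.9}(2) then converts $\phi$ into $\Phi\in E(A\op P)$ with $\Phi(a,p)=(1,0)$, completing the transitivity. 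The standard consequence is that two projective modules $Q_1,Q_2$ with $Q_1\op A\iso Q_2\op A$ have $Q_1\iso Q_2$: a splitting of $\Um(A\op P)$ under $E(A\op P)$ forces the stabilizing free summand to split off uniformly, giving cancellation of $P$.

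The main obstacle I anticipate is the bookkeeping that lets me apply Proposition~\ref{prop3.5}: that proposition is stated for free modules $\Um^1_r(A,sA)$, while the datum here lives in $\Um(A\op P)$ for a genuinely non-free $P$. The whole force of Lemma~\ref{lem3.9} and Lemma~\ref{K3.3} is to bridge this gap, and the delicate point is ensuring the generators $c_i$ lie in the correct power of $s$ (namely $s^2A$) so that both the cancellation input of Lemma~\ref{K3.3} and the elementary-transitivity input of Lemma~\ref{lem3.9}(2) are simultaneously available; reconciling the ideal-level hypotheses $sA$ versus $s^2A$ via the property $(0:sR)=(0:s^2R)$ is the technical heart. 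Once that alignment is arranged, everything reduces to the already-established Proposition~\ref{prop3.5}, and the argument runs parallel to $(\cite{DK}$, Theorem 3.12).
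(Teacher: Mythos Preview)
Your endgame is right and matches the paper: once $(a,p)\in\Um(A\op P,sA)$ with $p=\sum c_ip_i$ and $c_i\in sA$, Proposition~\ref{prop3.5} produces $\phi\in E^1_{r+1}(A,sA)$ and Lemma~\ref{lem3.9}(2) converts it to $\Phi\in E(A\op P)$ with $\Phi(a,p)=(1,0)$. The gap is in how you reach that situation. You write that you will ``work over $A/sA$ and lift'' to land in $\Um^1(A\op P,s^2A)$, but you never say why anything can be done over the quotient: over $\bar A=A/s^2A$ the module $\bar P$ is \emph{not} known to be free (what you have is that $P_s$ is free, not that $P/sP$ is), so there is no direct coordinate manipulation available. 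The paper closes this by \emph{induction on $d$}: after first reducing to $R$ reduced, the chosen $s$ is a non-zerodivisor in $R$, so $\dim(R/s^2R)\le d-1$ and $\bar A$ is of type $\bar R[d',m,n]$ with $d'<d$; the inductive hypothesis gives $\bar\sigma\in E(\bar A\op\bar P)$ taking $(\bar a,\bar p)$ to $(1,0)$, and Lemma~\ref{rem2.1} lifts $\bar\sigma$ to $E(A\op P)$. The base case $d=0$ is handled separately (then $R$ is a product of fields, $P$ is free by~\ref{g}, and Proposition~\ref{prop3.5} with $s=1$ applies). Your sketch contains no induction and no substitute for it, so the reduction step is unjustified.

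A second point: the detour through Lemma~\ref{K3.3} is both unnecessary and too weak. It is unnecessary because, once the induction above places $(a,p)$ in $\Um(A\op P,sA)$, Proposition~\ref{prop3.5} and Lemma~\ref{lem3.9}(2) already finish; there is no separate ``complementary step''. It is too weak because Lemma~\ref{K3.3} only delivers an element of $\Aut(A\op P)$, not of $E(A\op P)$, so even if its hypothesis were verified it would yield cancellativity of $P$ but not the transitivity of $E(A\op P)$ on $\Um(A\op P)$ that the theorem asserts. The paper's proof of Theorem~\ref{p33} does not use Lemma~\ref{K3.3} at all.
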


\begin{proof}
Using (\cite{DK}, Lemma 3.1), we may assume that $R$ is reduced. If
$d=0$, then $R$ is a direct product of fields. Hence $P$ is free by
(\ref{g}) and the result follows from (\ref{prop3.5}) with
$s=1$. Assume $d>0$ and use induction on $d$.

By (\ref{lem3.9}), there exist a non-zerodivisor $s\in R$,
$p_1,\ldots, p_r\in P$ and $\phi_1,\ldots, \phi_r\in P^*$ satisfying
the properties of (\ref{lem3.9}(1)). We may assume that $s$ is not a
unit, otherwise $P$ is free and we are done by (\ref{prop3.5}). 
Rest of the proof is similar to (\cite{DK}, Theorem
3.12) with $J=R$, we only give a sketch.

Let $(a,p)\in \Um(A\op P)$. Using (\ref{rem2.1}) and induction on $d$,
we may assume that $(a,p)=(1,0)$ modulo $s^2A$. By (\ref{lem3.9}), $p
=a_1p_1+\ldots +a_rp_r$ with $a_i \in sA$ and $(a,a_1,\ldots,a_r) \in
\Um_{r+1}(A,sA)$. By (\ref{prop3.5}), there exist $\phi\in E^1(A,sA)$
such that $\phi(a,a_1,\ldots,a_r)=(1,0,\ldots,0)$. By
(\ref{lem3.9}(2)), we get $\Psi \in E(A\op P)$ such that
$\Psi(a,p)=(1,0)$. This completes the proof.  $\hfill \gj$
\end{proof}
\medskip

Following result generalizes (\ref{p3.7}).

\begin{proposition}{\label{tt}}
Let $B$ be a reduced ring of dimension $d$ containing a field $k$ and
$R$ an overring of $B[X]$ contained in its total quotient ring. Let
$A=R[Y_1,\ldots,Y_m,f_1(l_1)^{-1},\ldots,f_n(l_n)^{-1}]$ be a ring of
type $R[\dim R,m,n]^*$ with $n\leq m$, $l_i\in k[Y_1,\ldots
  Y_i]$ and $a_{ii}\not=0$. Then every projective $A$-module of rank
$r\geq d+1$ is cancellative.
\end{proposition}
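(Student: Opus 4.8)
The plan is to reduce the cancellation question over $A$ to a cancellation question over a ring to which the earlier Proposition \ref{p3.7} (Bhatwadekar--Rao) applies, namely a ring of the form (overring of $\ol B[X]$ in its total quotient ring)$[\text{polynomials},\text{Laurent variables}]$. The key mechanism is already packaged in the lemmas of this section: by Lemma \ref{lem3.9}(1), for a projective $A$-module $P$ of rank $r$ we obtain $s\in R$ (a non-zerodivisor in $R_{\mathrm{red}}$ with $(0:sR)=(0:s^2R)$), elements $p_1,\dots,p_r\in P$, and $\phi_1,\dots,\phi_r\in P^*$ with $P_s$ free, $(\phi_i(p_j))=\mathrm{diag}(s,\dots,s)$ and $sP\subset \sum p_iA$. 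First I would invoke Lemma \ref{K3.3}: to move an arbitrary element of $\Um^1(A\op P,s^2A)$ to $(1,0)$ by an automorphism of $A\op P$, it suffices to know that $R'^{\,r}$ is cancellative, where $R'=A[X]/(X^2-s^2X)$. So the heart of the matter is to verify that this auxiliary ring $R'$ is again of a shape covered by Proposition \ref{p3.7}.

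The second step is to analyze $R'=A[X]/(X^2-s^2X)$. Since $s\in R$ and $A=R[Y_1,\dots,Y_m,f_1(l_1)^{-1},\dots,f_n(l_n)^{-1}]$, the ring $R'$ is an overring-of-polynomial construction built on $R[X]/(X^2-s^2X)$. I would set $\ol B = B[X]/(X^2-s^2X)$ (or the appropriate reduced quotient), observe that it is again reduced of dimension $d$, and show that $R'$ is an overring of $\ol B[X]$ contained in its total quotient ring with the same inverted linear-form structure $f_i(l_i)^{-1}$, $l_i\in k[Y_1,\dots,Y_i]$, $a_{ii}\neq 0$. Because the defining relation $X^2-s^2X$ is degree one in the total quotient ring (the element $s^2$ being a non-zerodivisor up to the stabilized annihilator condition $(0:sR)=(0:s^2R)$), the idempotent-type splitting should realize $R'$ as a finite product, or as an overring, to which Proposition \ref{p3.7} applies with the bound $r\geq d+1$, yielding that $R'^{\,r}$ is cancellative. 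This is precisely what feeds Lemma \ref{K3.3}.

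The third step assembles the transitivity. Given $(a,p)\in \Um(A\op P)$ with $r\geq d+1$, I would first use Lemma \ref{rem2.1} together with the freeness of $P_s$ and the structure of Gabber's theorem (\ref{g}) on the fiber $A/sA$-type reductions to bring $(a,p)$ into $\Um^1(A\op P,s^2A)$; here the hypotheses $n\leq m$, $l_i\in k[Y_1,\dots,Y_i]$, $a_{ii}\neq 0$ are exactly what make the successive changes of variable $Y_i\mapsto a_{ii}^{-1}(\dots)$ legitimate and keep the ring within the type $R[\dim R,m,n]^*$. Once inside $\Um^1(A\op P,s^2A)$, Lemma \ref{K3.3} (now applicable by Step two) carries the element to $(1,0)$ by an automorphism of $A\op P$. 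Since every unimodular element is taken to $(1,0)$, the module $P$ is cancellative.

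I expect the main obstacle to be Step two: correctly identifying $R'=A[X]/(X^2-s^2X)$ as an overring of $\ol B[X]$ inside its total quotient ring so that Proposition \ref{p3.7} genuinely applies. The delicate points are that $s$ need only be a non-zerodivisor in $R_{\mathrm{red}}$ (so one must use the condition $(0:sR)=(0:s^2R)$ to control zerodivisors and nilpotents when passing to $R'$), and that forming the quotient by $X^2-s^2X$ must be shown to preserve both reducedness and the dimension bound $d$ so that the hypothesis ``$r\geq d+1$'' transfers without loss. Verifying that the inverted elements $f_i(l_i)$ remain non-zerodivisors in this larger ring, and that the total-quotient-ring containment is maintained after inverting them, is the technically fussy part; everything else is a direct application of the cited lemmas.
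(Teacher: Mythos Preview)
Your plan has the right overall architecture (choose $s$, apply Lemma~\ref{K3.3}, then reduce modulo $s^2$), but Step~2 contains a genuine gap. Proposition~\ref{p3.7} only applies to rings of the form $R[X_1,\ldots,X_n,Y_1^{\pm 1},\ldots,Y_m^{\pm 1}]$ with $R$ an overring of $B[X]$; it says nothing about rings in which general polynomials $f_i(l_i)$ are inverted. The ring $A[X]/(X^2-s^2X)$ still carries all the inversions $f_i(l_i)^{-1}$, so you cannot feed it into Proposition~\ref{p3.7} directly, and there is no way to absorb these inversions into an ``overring of $\ol B[X]$'' without raising the base dimension from $d$ to $d+m$ (which would destroy the rank bound). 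The paper deals with this by an entirely separate Step~1: it first proves, by induction on $n$, that $A^{d+1}$ itself is cancellative for any ring of this type. The induction uses the triangular hypothesis $l_i\in k[Y_1,\ldots,Y_i]$, $a_{ii}\neq 0$ to straighten $l_n$ to $Y_n$, localizes at $1+f_n(Y_n)k[Y_n]$, patches via a fiber product, and invokes Suslin's monic inversion (since $f_n\in k[Y_n]$) to descend to a ring with $n-1$ inversions; only the base case $n=0$ is Proposition~\ref{p3.7}. You have not supplied any substitute for this argument, and the hypotheses $n\leq m$, $l_i\in k[Y_1,\ldots,Y_i]$, $a_{ii}\neq 0$ that you mention in Step~3 are in fact consumed precisely here, not in the reduction modulo $s^2$.

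There is a second, related issue with your choice of $s$. Lemma~\ref{lem3.9} gives $s\in R$, but to form $\ol B=B[T]/(T^2-s^2T)$ and keep it reduced of dimension $d$ you need $s$ to lie in $B$ and to be a non-zerodivisor there. The paper therefore does not invoke Lemma~\ref{lem3.9} at this point: instead it first shows (using the $d=0$ analysis via Gabber) that $S^{-1}P$ is free for $S$ the non-zerodivisors of $B$, and then picks $s\in S\subset B$ with $P_s$ free. With $s\in B$ one sets $B'=B[T]/(T^2-s^2T)$, $R'=R[T]/(T^2-s^2T)$, and $A'=R'[Y_1,\ldots,Y_m,f_1(l_1)^{-1},\ldots,f_n(l_n)^{-1}]$; then $R'$ is an overring of $B'[X]$ with $\dim B'=d$, so Step~1 (not Proposition~\ref{p3.7}) gives $(A')^{d+1}$ cancellative, and Lemma~\ref{K3.3} applies. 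Your Step~3 is then handled by Theorem~\ref{p33} applied to $\ol A$ over $\ol R=R/s^2R$ of dimension $\leq d$, rather than by an ad hoc appeal to Gabber.
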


\begin{proof}
If $\dim R\leq d$ or $r\geq d+2$, then result follows from
(\ref{p33}). Hence we assume dim $R=d+1$ and $r=d+1$. 

{\bf Step 1:} We first prove that $A^{d+1}$ is cancellative. When
$n=0$, we are done by (\ref{p3.7}). Assume $n>0$ and
use induction on $n$.

Recall that $l_n=a_{n1}Y_1+\ldots+a_{nn}Y_n-b_n$ with $a_{nn}\not=0$.
Changing $Y_{n} \mapsto
a_{nn}^{-1}(Y_n-a_{n1}Y_1-\ldots-a_{n,n-1}Y_{n-1})+b_n$, we can assume
that $l_n=Y_n$. Let $P$ be a stably free $A$-module of rank $d+1$. If
$S=1+f_n(Y_n)k[Y_n]$, then $\dim B[Y_n]_{f_n(Y_n)S}=d$. If $R'=R[Y_n]_{f_nS}$, then
$A_S=R'[Y_1,\ldots,Y_{n-1},Y_{n+1},\ldots,Y_m,f_1(l_1)^{-1},\ldots,
  f_{n-1}(l_{n-1})^{-1}]$ is a ring of type
$R'[d+1,m-1,n-1]^*$ with $l_i\in k[Y_1,\ldots Y_i]$ and $a_{ii}\not=0$. By
induction on $n$, $P_S$ is free. Hence we can find $g\in k[Y_n]$ such
that $P_{1+f_ng}$ is free. If
$C'=R[Y_1,\ldots,Y_{n-1},Y_{n+1},\ldots,Y_m,f_1(l_1)^{-1},\ldots,f_{n-1}(l_{n-1})^{-1}]$
and $C=C'[Y_n]$, then we have following fiber product diagram
\[
\xymatrix{
C \ar@{->}[r]
     \ar@{->}[d]
& A=C_{f_n} 
     \ar@{->}[d] 
\\
C_{1+gf_n}   
\ar@{->}[r]
     &A_{1+gf_n}=C_{f_n(1+gf_n)}.    
}
\] 
Since $P_{1+f_ng}$ is free, by (\ref{ai}), $P$ is extended from $C$,
say $P'_{f_n}=P$ for some projective $C$-module $P'$. Since $P\op
A\iso A^{d+2}$, we get $(P'\op C)_{f_n}\iso {C_{f_n}}^{d+2}$. Since
$f_n\in C'[Y_n]$ is a monic polynomial, using Suslin's monic inversion
theorem (\cite{Su1}, Theorem 1), we get $P'\op C \iso C^{d+2}.$ But
$C$ is a ring of type $R[d+1,m,n-1]^*$ with $l_i\in k[Y_1,\ldots Y_i]$
and $a_{ii}\not=0$. Hence by induction on $n$, $C^{d+1}$ is
cancellative. Therefore, $P'$ is free and so $P$ is free. This proves
that $A^r$ is cancellative.

{\bf Step 2:} We will prove the general case.  Let $P$ be a projective
$A$-module of rank $d+1$.  If $d=0$, then we may assume that $B$ is a
field. It is easy to see that $R=B[X,f(X)^{-1}]$ for some $f(X)\in
B[X]$. Hence $A$ is a ring of type $B[0,m+1,n+1]$, so $P$ is free, by
(\ref{g}).  Assume $d\geq 1$.

If $S$ is the set of non-zerodivisors of $B$, then as above,
projective modules over $S^{-1}A$ are free. Hence we can choose $s\in
S$ such that $P_s$ is free and (\ref{lem3.9} (1)) holds. Note that if
$B'=B[T]/(T^2-s^2T)$, $R'=R[T]/(T^2-s^2T)$ and
$A'=R'[Y_1,\ldots,Y_m,f_1(l_1)^{-1},\ldots,f_n(l_n)^{-1}]$, then
$(A')^{d+1}$ is cancellative, by step 1. By (\ref{K3.3}), every
element of $\Um^1(A\op P,s^2A)$ can be taken to $(1,0)$ by some
element of $Aut(A\op P)$. To complete the proof, it is enough to show
that if $(a,p)\in \Um(A\op P)$, then there exist $\sigma\in Aut(A\op
P)$ such that $\sigma(a,p)\in \Um^1(A\op P,s^2A)$.

Let ``bar'' denote reduction modulo $s^2A$. Then $\ol A=\ol
R[Y_1,\ldots,Y_m,f_1(l_1)^{-1},\ldots,f_n(l_n)^{-1}]$ and $\dim \ol
R\leq d$. By (\ref{p33}), there exist $\ol \sigma \in E(\ol A\op \ol
P)$ such that $\ol \sigma(\ol a,\ol p)=(1,0)$. Lifting $\ol \sigma$ to
an element $\sigma\in E(A\op P)$, we get $\sigma(a,p)\in \Um^1(A\op
P,s^2A)$.  This completes the proof.  $\hfill \gj$
\end{proof}

\begin{remark}
The result (\ref{tt}) is true for rings of type $R[d,m,n]$ such that
$n\leq m$, $l_i\in k[Y_1,\ldots,Y_i]$, $a_{ii}\neq 0$ and each
$f_i(T)\in R[T]$ is a monic polynomial. The proof is same as above by
taking $S=1+f_n(Y_n)R[Y_n]$ and noting that $R'$ is an overring of
$R[Y_n]$. $\hfill \gj$
\end{remark}
%%%%%%%%%%%%%%%%%%%%%%%%%%%%%%%%%%%%%%%%%%%%%%%%%%%%%%%%%%%%%%%%%

\section{Quillen's question and Bhatwadekar-Rao's results}

In this section we will generalize some results from \cite{BR}
regarding Quillen's question mentioned in the introduction. We begin
with the following:

\begin{lemma}\label{cor1}
Let $R$ be a UFD of dimension $1$ and
$A=R[Y_1,\ldots,Y_m,f_1(l_1)^{-1},\ldots, f_n(l_n)^{-1}]$ a ring of
type $R[1,m,n]$. Then every projective $A$-module is free.
\end{lemma}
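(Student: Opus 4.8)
The plan is to reduce the statement to two ingredients: first that every projective $A$-module is \emph{stably} free, and second that stable freeness can be upgraded to freeness via the cancellation theorem (\ref{p33}). Since $R$ is a UFD of dimension $1$, every nonzero prime ideal has height $1$ and is therefore principal, so $R$ is a PID; in particular $R$ is a regular domain with $K_0(R)=G_0(R)=\mathbb{Z}\cdot[R]$. All of the intermediate rings $C_0=R[Y_1,\ldots,Y_m]$ and $C_i=(C_{i-1})_{f_i(l_i)}$ (so that $C_n=A$) are localizations of a regular domain, hence are themselves regular domains; consequently the Cartan homomorphism $K_0(C_i)\to G_0(C_i)$ is an isomorphism for each $i$.

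To prove stable freeness I would compute $G_0(A)$ by running along this chain. Homotopy invariance of $G$-theory gives $G_0(C_0)=G_0(R[Y_1,\ldots,Y_m])\cong G_0(R)=\mathbb{Z}\cdot[R]$, with $[C_0]$ corresponding to $[R]$. For a single-element localization $C_i=(C_{i-1})_{f_i(l_i)}$ the restriction map $G_0(C_{i-1})\to G_0(C_i)$, $[M]\mapsto[M_{f_i(l_i)}]$, is surjective, because every finitely generated $C_i$-module is the localization of a finitely generated $C_{i-1}$-module; moreover it sends $[C_{i-1}]$ to $[C_i]$. Proceeding inductively, $G_0(C_i)$ is cyclic, generated by $[C_i]$, and the generic-rank map $G_0(C_i)\to\mathbb{Z}$ carries $[C_i]$ to $1$, so $G_0(C_i)\cong\mathbb{Z}\cdot[C_i]$. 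In particular $K_0(A)=G_0(A)=\mathbb{Z}\cdot[A]$, whence for any projective $A$-module $P$ of rank $r$ we get $[P]=r[A]=[A^r]$ in $K_0(A)$, i.e. $P\oplus A^j\cong A^{r+j}$ for some $j$: $P$ is stably free. (This is in the spirit of Gabber's theorem (\ref{g}), which already trivializes $P$ over the generic fibre $A\otimes_R\mathrm{Frac}(R)$, a ring of type $\mathrm{Frac}(R)[0,m,n]$.)

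It then remains to descend from stable freeness to freeness. Since $A$ is of type $R[1,m,n]$ with $\dim R=1$, the threshold in (\ref{p33}) is $\max\{2,d+1\}=2$, so every projective $A$-module of rank $\geq 2$ is cancellative; applied to a stably free $P$ of rank $r\geq 2$, cancelling $A^j$ in $P\oplus A^j\cong A^r\oplus A^j$ gives $P\cong A^r$. The low ranks are handled by hand: a rank $0$ projective is zero, while a stably free $P$ of rank $1$ has $\det P\cong\det(P\oplus A^j)\cong\det(A^{1+j})\cong A$, and a rank $1$ projective is isomorphic to its determinant, so $P\cong A$. I expect the main obstacle to be the stable-freeness step, i.e. verifying that $\widetilde{K}_0(A)=0$; the essential inputs there are the regularity of every ring in the localization chain (so that $K_0=G_0$) together with homotopy invariance and localization surjectivity for $G$-theory, the remaining cancellation arguments being immediate from (\ref{p33}).
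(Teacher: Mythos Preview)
Your argument is correct. Since a one-dimensional (Noetherian) UFD is a PID, $R$ is regular, so the whole chain $C_0,\ldots,C_n=A$ consists of regular Noetherian domains; the Cartan isomorphism $K_0\cong G_0$, homotopy invariance of $G_0$, and the localization exact sequence then give $K_0(A)=\BZ\cdot[A]$, so every projective $A$-module is stably free. The upgrade to freeness via (\ref{p33}) in rank $\geq 2$ and the determinant in rank $1$ is fine, and there is no circularity: (\ref{p33}) is established in Section~3 without any reference to this lemma.

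The paper, however, argues differently. It proceeds by induction on $n$: after an elementary change of variables one may assume $l_n=Y_m$; localizing at $S=1+f_n(Y_m)R[Y_m]$ lands in a ring of type $R'[1,m-1,n-1]$ over the one-dimensional UFD $R'=R[Y_m]_{f_nS}$, so by induction $P_S$ is free; one then patches $P$ and a free module over a fiber-product square with $C$ of type $R[1,m,n-1]$, where projective modules are free by induction. The base $n=0$ is the Bhatwadekar--Rao result (\cite{BR}, Proposition~3.1). Your $K$-theoretic route is shorter and more conceptual, leaning on standard machinery plus the cancellation theorem already in hand; the paper's route is more elementary and self-contained in the sense that it never invokes $G$-theory or (\ref{p33}), and the same localize-and-patch template is what drives several later arguments (e.g.\ (\ref{tt}), (\ref{p32})), so it is developed here in the simplest setting.
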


\begin{proof}
If $n=0$, we are done by (\cite{BR}, Proposition 3.1).  Assume $n>0$
and use induction on $n$.  Let $P$ be a projective $A$-module of rank
$r$. Using same arguments as in the proof of (\ref{prop3.5}), after
changing variables $(Y_1,\ldots,Y_m)$ by $\gt(Y_1,\ldots,Y_m)$ for
some $\theta\in E_m(k)$, we may assume that $l_n=Y_m$.

Let $S=1+f_n(Y_m)R[Y_m]$ and $R'=R[Y_m]_{f_nS}$. Then $R'$ is a UFD of
dimension $1$ and
$A_S=R'[Y_1,\ldots,Y_{m-1},f_1(l_1)^{-1},\ldots,f_{n-1}(l_{n-1})^{-1}]$
is a ring of type $R'[1,m-1,n-1]$, where $l_i=a_{i1}Y_1+\ldots
+a_{i,m-1}Y_{m-1}-\wt b_i$ with $\wt b_i =b_i-a_{im}Y_m \in R'$ for
$i=1,\ldots, n-1$.  By induction on $n$, every projective $A_S$-module
is free. In particular, $P_S$ is free. Find $1+f_ng\in
S$ such that $P_{1+f_ng}$ is free.  The ring
$C=R[Y_1,\ldots,Y_m,f_1(l_1)^{-1},\ldots,f_{n-1}(l_{n-1})^{-1}]$ is of
type $R[1,m,n-1]$. Hence by induction on $n$, projective $C$-modules
are free.  Consider the following fiber product diagram
\[
\xymatrix{
C \ar@{->}[r]
     \ar@{->}[d]
& A=C_{f_n} 
     \ar@{->}[d] 
\\
C_{1+gf_n}   
\ar@{->}[r]
     &A_{1+f_ng}=C_{f_n(1+gf_n)}.     
}
\]
Since $P_{1+f_ng}$ is free,
patching projective modules $P$ and
$(C_{1+f_ng})^r$ over $C_{f_n(1+f_ng)}$, we get that $P$ is extended
from $C$ and hence $P$ is free. $\hfill
\gj$
\end{proof} 

%%%%%%%%%%%%%%%%%%%%%%%%%%%%%%%%
\subsection{Infinite residue-field case}

The following result generalizes Bhatwadekar-Rao's Laurent polynomial
case (\cite{BR}, Theorem 3.2).

 \begin{proposition}{\label{p32}}
Let $R$ be a regular $k$-spot of dimension $d$ with infinite residue
field, $f$ a regular parameter of $R$ and $A=R[Y_{1},\ldots, Y_{m},
  f_1(l_1)^{-1},\ldots, f_n(l_n)^{-1}]$ a ring of type
$R[d,m,n]^{*}$. Then every projective $A_f$-module is free.
\end{proposition}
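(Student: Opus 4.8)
The plan is to induct on $d=\dim R$, using Nashier's structure theorem (\ref{BN}) and the analytic-isomorphism criterion (\ref{ai}) to replace the arbitrary regular $k$-spot $R$ by a standard model over which $R_f$ becomes a localization of a regular local ring. A preliminary reduction lets me assume $k$ is perfect: regularity is intrinsic, and a regular local ring essentially of finite type over a field is essentially of finite type over a perfect subfield (automatic in characteristic $0$, and in characteristic $p$ when $k$ is finitely generated over $\BF_p$), so (\ref{BN}) becomes applicable. For the base case $d=1$, the ring $R$ is a DVR and $f$ a uniformizer, whence $R_f=\mathrm{Frac}(R)$ is a field, $A_f$ is of type $K[0,m,n]$, and Gabber's theorem (\ref{g}) gives that every projective $A_f$-module is free.

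For the inductive step $d\geq 2$, choose $g\in\mm$ with $(g,f)$ a regular sequence and apply (\ref{BN}) to obtain an infinite field $K\supseteq k$ (infinite by (\ref{BN})(iii), since $R/\mm$ is infinite), the standard regular $K$-spot $R'=K[Z_1,\ldots,Z_d]_{(\phi(Z_1),Z_2,\ldots,Z_d)}$ with $Z_d=f$, and an analytic isomorphism $R'\to R$ along some $h\in gR\cap R'$. Writing $A'=R'[Y_1,\ldots,Y_m,f_1(l_1)^{-1},\ldots,f_n(l_n)^{-1}]$, the isomorphism $R'/hR'\iso R/hR$ extends to $A'/hA'\iso A/hA$ and, after inverting $f$ (note $h$ is a non-unit non-zerodivisor in $R_f$, so the reductions remain nonzero), to an analytic isomorphism $A'_f\to A_f$ along $h$. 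The virtue of the standard model is that $R'_f$ is a localization of the factorial ring $R'$, hence a UFD of dimension $d-1$; explicitly it is a localization of the Laurent extension $D[Z_d^{\pm1}]$ of the regular $K$-spot $D=K[Z_1,\ldots,Z_{d-1}]_{(\phi(Z_1),Z_2,\ldots,Z_{d-1})}$ of dimension $d-1$, whose residue field $K[Z_1]/(\phi)$ is again infinite.

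To conclude I need two freeness inputs. First, projective $A'_f$-modules must be free: when $d=2$ this is immediate from (\ref{cor1}), since $A'_f$ is then a ring of type $R'_f[1,m,n]$ over the one-dimensional UFD $R'_f$; for $d\geq 3$ one feeds the Laurent description of $R'_f$ (with $B=D$, $X=Z_d$) into the cancellation theorems (\ref{p33}), (\ref{tt}) together with the inductive hypothesis. Second, for a projective $A_f$-module $P$, its localization $P_h$ must be free over $A_{fh}$, whose base $R_{fh}$ has dimension $d-2$; this is again a lower-dimensional instance, handled by induction together with (\ref{p33}). Granting both, (\ref{ai}) shows that $P$ is extended from $A'_f$ and is therefore free. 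The main obstacle is precisely this inductive bookkeeping for $d\geq 3$: one must match the localized standard model $R'_f$ --- a UFD that is a localization of a Laurent extension rather than a regular $k$-spot itself --- to the exact hypotheses of (\ref{p33}), (\ref{tt}) and of the inductive statement, and separately establish the freeness of $P_h$ over $A_{fh}$. Once these matchings are arranged, Gabber's theorem (\ref{g}) and (\ref{cor1}) supply the low-dimensional cases while the cancellation results of Section 3 supply the rest, so the residual difficulty is organizational rather than conceptual.
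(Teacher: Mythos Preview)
Your reduction via (\ref{BN}) to the standard model $R'=K[Z_1,\ldots,Z_d]_{(\phi(Z_1),Z_2,\ldots,Z_d)}$ with $f=Z_d$ is the right opening move, and the $d\leq 2$ case via (\ref{cor1}) is correct. But two steps do not go through as written.

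First, the easy one: you choose $g$ with $(g,f)$ regular before looking at $P$, and then try to argue $P_h$ is free by ``induction''. The inductive hypothesis concerns regular $k$-spots with a single regular parameter inverted; $R_{fh}$ is neither local nor of that form, so the appeal fails. The fix is the paper's: first invert all of $R\setminus\{0\}$, use Gabber (\ref{g}) to make $T^{-1}P$ free, pick $g$ with $P_g$ already free, and only then invoke (\ref{BN}). Since $h\in gR$, $P_h$ is then free for free, and (\ref{ai}) applies with no further work.

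Second, and this is the real gap: for $d\geq 3$ you claim that projective $A'_{Z_d}$-modules are free by ``feeding the Laurent description $R'_{Z_d}\subset D[Z_d^{\pm1}]$-localized into (\ref{p33}), (\ref{tt}) together with the inductive hypothesis''. This does not work. The results (\ref{p33}) and (\ref{tt}) give only cancellation, and even granting that every projective over $A'_{Z_d}$ is stably free, (\ref{tt}) with $B=D$ of dimension $d-1$ yields freeness only in rank $\geq d$; ranks $2,\ldots,d-1$ are untouched. Nor does the inductive hypothesis apply: $D$ is a regular $K$-spot of dimension $d-1$, but you are not inverting a regular parameter of $D$---you are adjoining a Laurent variable and localizing further---so the statement for dimension $d-1$ says nothing about $A'_{Z_d}$. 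The paper's inductive step is a genuine idea, not bookkeeping: localize at the multiplicative set $S$ of nonzero homogeneous elements of $k[Z_2,\ldots,Z_d]$, use Nashier's fact that $C_S$ is a one-dimensional UFD to see (via (\ref{cor1})) that $P_S$ is free, pick a homogeneous $F$ with $P_F$ free, make $F$ monic in $Z_2$ by a $K$-linear change of $Z_2,\ldots,Z_d$ (this is where the infinite residue field is used), and then observe that $\wt R[Z_2]\hookrightarrow R$ is an analytic isomorphism along $F$ for $\wt R=k[Z_1,Z_3,\ldots,Z_d]_{(\phi(Z_1),Z_3,\ldots,Z_d)}$. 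This descends $P$ to a ring of type $\wt R_{Z_d}[d-2,m+1,n]$, and now induction on $d$ applies legitimately.
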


\begin{proof}
Let $P$ be a projective $A_f$-module. If $T=R-\{0\}$, then $T^{-1}P$
is free, by (\ref{g}). Find $g\in T$ such that $P_g$ is free. We may
assume that $(g,f)$ is a regular sequence in $R$.  By (\ref{BN}),
there exist an infinite field $K/k$, a regular $K$-spot
$R'=K[Z_1,\ldots,Z_d]_{(\phi(Z_1),Z_2,\ldots, Z_d)}$ such that
$R'\subset R$ is an analytic isomorphism along $h\in gR\cap R'$ and
$f=Z_d$. Therefore, $A'=R'[Y_{1},\ldots, Y_{m}, f_1(l_1)^{-1},\ldots,
  f_n(l_n)^{-1}]$ is a ring of type $R'[d,m,n]^{*}$ and $A'\subset A$
is an analytic isomorphism along $h$.  Since $P_h$ is free, by
(\ref{ai}), $P$ is extended from $A'_{Z_d}$.

Enough to show that projective $A'_{Z_d}$-modules are
free. Replace $R'$ by $R$ and $A'$ by $A$.  If $d\leq 2$, then
$R_{Z_d}$ is a UFD of dimension $\leq 1$. Hence $P$ is free, by
(\ref{cor1}, \ref{g}).
Assume $d>2$ and use induction on $d$. The proof
is similar to (\cite{BR}, Theorem 3.2), hence we only give a sketch.

Let $S$ be multiplicative set of all non-zero homogeneous polynomials
in $C=k[Z_2,\ldots,Z_d]$. Then $R_{Z_dS}$ is a localization of
$C_S[Z_1].$ We can find $h\in C_S[Z_1]$ such that $P_S$ is defined
over the ring $D=C_S[Z_1, h(Z_1)^{-1}, Y_{1},\ldots, Y_{m},
  f_1(l_1)^{-1},\ldots, f_n(l_n)^{-1}]$. Note that $C_S$ is a UFD of
dimension $\leq 1$, by (\cite{N}, Proposition 1.11). Since $D$ is of
type $C_S[1,m+1,n+1]$, by (\ref{cor1}), $P_S$ is free. Choose $F\in S$
such that $P_F$ is free. Since $K$ is infinite, by linear change of
variables, we can assume that $F$ is homogeneous and monic polynomial
in $Z_2$ with coefficients in $k[Z_3,\ldots,Z_d]$.

If $\wt R=k[Z_1,Z_3,\ldots,Z_d]_{(\phi(Z_1),Z_3,\ldots,Z_d)}$, then
$\wt R[Z_2]\subset R$ is an analytic isomorphism along $F$ (\cite{BR},
page 803).  If $\wt A=\wt R[Z_2,
  Y_1,\ldots,Y_m,f_{1}(l_1)^{-1},\ldots,f_n(l_n)^{-1}]$, then $\wt
A_{Z_d}\subset A_{Z_d}$ is an analytic isomorphism along $F$. Since
$P_F$ is free, $P$ is extended from $\wt A_{Z_d}$, by
(\ref{ai}). Observe that $\wt A_{Z_d}$ is a ring of type $\wt
R_{Z_d}[d-2,m+1,n]$. Hence by induction on $d$, projective $\wt
A_{Z_d}$-modules are free. In particular, $P$ is free.  $\hfill \gj$
\end{proof}
\medskip

Recall that $R(T)$ denote the ring $S^{-1}R[T]$, where $S$ is the
multiplicative set consisting of all monic polynomials of $R[T]$.

\begin{corollary}\label{s2}
Let $(R,\mm)$ be a regular $k$-spot of dimension $d$ with infinite
residue field. If $A=R[Y_{1},\ldots, Y_{m}, f_1(l_1)^{-1},\ldots,
  f_n(l_n)^{-1}]$ is a ring of type $R[d,m,n]^{*}$, then projective
modules over $A$ and $A\ot_R R(T)$ are free.
\end{corollary}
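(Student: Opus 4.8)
The plan is to deduce Corollary \ref{s2} directly from Proposition \ref{p32}, which already furnishes the freeness of projective $A_f$-modules for any regular parameter $f$. The key observation is that to prove projective modules over $A$ itself are free, I would invoke Proposition \ref{p32} with a well-chosen regular parameter. Indeed, since $(R,\mm)$ is a regular $k$-spot of dimension $d$, I would first dispose of the trivial case $d=0$ (where $R$ is a field and the freeness of projective $A$-modules is immediate from Gabber's Theorem \ref{g}, as $A$ is then of type $k[0,m,n]^{*}$). For $d \geq 1$, pick any regular parameter $f$ of $R$; then $R_f$ has dimension $d-1$, and one can try to run an argument that compares $A$ with $A_f$.

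The cleanest route, however, is to realize $A$ as an analytic isomorphism situation or to apply the fiber-product/patching machinery. First I would let $P$ be a projective $A$-module. By Gabber's Theorem \ref{g}, after localizing at $T=R-\{0\}$ (the nonzero elements of the domain $R$), the module $T^{-1}P$ becomes free, so I can find $g \in R$ nonzero with $P_g$ free. Choosing a regular parameter $f$ so that $(g,f)$ is a regular sequence, I would then compare $A$ and $A_f$ via the analytic-isomorphism setup of Definition \ref{ai}: the natural map $A \to A_f$ localizes the regular parameter, and by Proposition \ref{p32} every projective $A_f$-module is free. The point is to patch the freeness over $A_f$ with the freeness of $P_g$ over a suitable fiber product to conclude that $P$ itself is free, exactly as in the proofs of \ref{cor1} and \ref{p32}.

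For the statement about $A \ot_R R(T)$, I would observe that $A \ot_R R(T) = S^{-1}A[T]$ where $S$ is the set of monic polynomials in $R[T]$, and that this ring is again of a type covered by the theory: writing $B = R(T)$, the ring $B$ is a regular local ring (in fact a regular $k$-spot) whose residue field is infinite (it contains the infinite field $K(T)$), with the same regular parameters $g_1,\ldots$ as $R$ and the same dimension $d$. Thus $A \ot_R R(T) = B[Y_1,\ldots,Y_m,f_1(l_1)^{-1},\ldots,f_n(l_n)^{-1}]$ is a ring of type $B[d,m,n]^{*}$, and freeness of its projective modules follows by applying the already-established first half of the corollary (or Proposition \ref{p32} together with the patching argument) to $B$ in place of $R$. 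The residue field of $R(T)$ being infinite regardless of whether $R/\mm$ is infinite is the technical convenience that makes the $R(T)$ case fall into the same framework.

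The main obstacle I anticipate is verifying cleanly that $R(T)$ remains a regular $k$-spot (so that Proposition \ref{p32} and Proposition \ref{BN} apply verbatim to $B = R(T)$), and checking that the regular parameters of $R$ remain regular parameters of $R(T)$ linearly independent modulo the square of the maximal ideal, so that the analytic-isomorphism input is legitimate. Once these structural facts about $R(T)$ are in hand, the freeness over $A \ot_R R(T)$ is just Proposition \ref{p32} applied to $B$, and no genuinely new argument is needed; the work is entirely in confirming that the hypotheses transport from $R$ to $R(T)$.
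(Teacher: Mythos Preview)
Your argument for freeness over $A$ has a genuine gap. You propose to choose $g \in R$ with $P_g$ free and a regular parameter $f$ with $(g,f)$ a regular sequence, then ``patch'' the freeness of $P_f$ (from Proposition~\ref{p32}) with that of $P_g$. But in the only interesting case $g$ is a non-unit, hence $f,g \in \mm$ and $(f,g)\subset\mm$; the open sets $D(f),D(g)$ do not cover $\Spec R$, so there is no fiber-product square available. The patching in \ref{cor1} and \ref{p32} works precisely because the two localizations are at \emph{comaximal} elements (such as $f_n$ and $1+gf_n$), a feature absent here. The analytic-isomorphism device of \ref{ai} does not help either: it compares a subring $R'\subset R$ with $R$, not two incomparable localizations of $R$.

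The paper's proof reverses your order and sidesteps this obstacle. It first treats $A\ot_R R(T)$ via the Bhatwadekar--Rao identification $R(T)=R[X]_{(\mm,X)}[1/X]$ with $X=T^{-1}$: here $R''=R[X]_{(\mm,X)}$ is a regular $k$-spot of dimension $d+1$ and $X$ is a regular parameter of $R''$, so Proposition~\ref{p32} applies directly to $A\ot_R R(T)=A''_X$. Only then is the $A$ case deduced, using Suslin's monic inversion: for $P$ projective over $A$, the module $P\ot_R R(T)$ is free, so $P[T]_h$ is free for some monic $h\in R[T]\subset A[T]$, and Suslin's theorem gives $P$ free. Your observation that $R(T)$ is itself a regular $k$-spot of dimension $d$ is correct but not what is needed; the key point is that $R(T)$ is already of the form (regular $k$-spot)${}_f$ for a regular parameter $f$, which is exactly the hypothesis of Proposition~\ref{p32}.
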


\begin{proof}
$(i)$ Assume $P$ is a projective $A\ot_R R(T)$-module. By (\cite{BR},
  Corollary 3.5), $R(T)=R[X]_{(\mm,X)}[1/X]$ with $X=T^{-1}$. Since
  $f=X\in R[X]_{(\mm,X)}$ is a regular parameter, we are done by
  (\ref{p32}).

$(ii)$ Assume $P$ is a projective $A$-module. Then, we are done by $(i)$,
using Suslin's monic inversion (\cite{Su1}, Theorem 1). 
$\hfill\gj$
\end{proof}
\medskip

The laurent polynomial case of the following result is due to
Popescu \cite{Po1}. 

\begin{theorem}\label{p28}
Let $(R,\mm,K)$ be a regular local ring of dimension $d$ containing a
field $k$ such that either char $k=0$ or char $k=p$ and tr-deg
$K/\BF_p\geq 1$.  Let $f$ be a regular parameter of $R$ and
$A=R[Y_{1},\ldots, Y_{m}, f_1(l_1)^{-1},\ldots, f_n(l_n)^{-1}]$ a ring
of type $R[d,m,n]^{*}$.  Then projective modules over $A,A_f$ and
$A\ot_R R(T)$ are free.
\end{theorem}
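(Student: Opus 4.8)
The plan is to reduce Theorem~\ref{p28} to the geometric case already settled in (\ref{p32}) and (\ref{s2}), namely that of a regular $k$-spot with infinite residue field, by writing $R$ as a filtered inductive limit of such rings via Popescu's theorem (\ref{Popescu}). The preliminary step is to replace $k$ by an infinite field sitting inside $R$. Only finitely many elements of $k$ occur as coefficients of the $f_i$ and of the linear forms $l_i$, so these lie in a finitely generated subfield $k_0\subseteq k$, and $A$ is already of type $R[d,m,n]^*$ over $k_0$. Now $k_0$, being finitely generated over its prime field, is either infinite or a finite field. If $k_0$ is infinite (in particular whenever char $k=0$, since then $k_0\supseteq\BQ$), set $k_1=k_0$. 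Otherwise $k_0$ is finite, so char $k=p$, and the hypothesis tr-deg $K/\BF_p\geq 1$ equals tr-deg $K/k_0$; choosing $t\in R$ whose image in $K$ is transcendental over $k_0$, every nonzero element of $k_0[t]$ avoids $\mm$ and is therefore a unit of $R$, so $k_1:=k_0(t)\subseteq R$ is infinite. In all cases $A$ is of type $R[d,m,n]^*$ over the infinite field $k_1\subseteq R$, and this is exactly where the characteristic/transcendence hypothesis is consumed.

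Next I apply (\ref{Popescu}) to write $R=\varinjlim_\lambda R_\lambda$ as a filtered limit of regular local rings $R_\lambda$ essentially of finite type over $\BZ$, with local transition maps. Since $k_1$ is finitely generated over the prime field, $k_1\subseteq R_\lambda$ for all large $\lambda$, and then each such $R_\lambda$ is a regular $k_1$-spot whose residue field contains $k_1$ and is hence infinite. The regular parameter $f\in\mm-\mm^2$ is the image of some $f_\lambda\in R_\lambda$, and for large $\lambda$ one has $f_\lambda\in\mm_\lambda-\mm_\lambda^2$, so $f_\lambda$ is a regular parameter of $R_\lambda$. Putting $A_\lambda=R_\lambda[Y_1,\ldots,Y_m,f_1(l_1)^{-1},\ldots,f_n(l_n)^{-1}]$ with the same $f_i,l_i$ (whose coefficients lie in $k_1\subseteq R_\lambda$), each $A_\lambda$ is a ring of type $R_\lambda[\dim R_\lambda,m,n]^*$ and $A=\varinjlim A_\lambda$. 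Because localization commutes with filtered colimits and the monic polynomials over $R$ are the union of those over the $R_\lambda$, one also gets $A_f=\varinjlim (A_\lambda)_{f_\lambda}$ and $A\otimes_R R(T)=\varinjlim\,(A_\lambda\otimes_{R_\lambda}R_\lambda(T))$.

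Finally, let $P$ be a projective module over any one of $A$, $A_f$, $A\otimes_R R(T)$. Each of these is a filtered colimit of the corresponding rings built from $R_\lambda$, and $P$ is finitely presented, so $P$ is extended from a projective module $P_\lambda$ over the stage-$\lambda$ ring for some large $\lambda$. As $R_\lambda$ is a regular $k_1$-spot of dimension $\dim R_\lambda$ with infinite residue field, (\ref{s2}) shows every projective module over $A_\lambda$ and over $A_\lambda\otimes_{R_\lambda}R_\lambda(T)$ is free, while (\ref{p32}) shows every projective $(A_\lambda)_{f_\lambda}$-module is free; in each case $P_\lambda$ is free, and therefore so is $P$. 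I expect the main obstacle to be the reduction of the first two paragraphs rather than this concluding colimit step: one must extract the infinite subfield $k_1\subseteq R$ from the transcendence hypothesis in characteristic $p$, and then verify that passing to the finite-type models of (\ref{Popescu}) simultaneously preserves the regular $k_1$-spot structure, the infiniteness of the residue field, the status of $f$ as a regular parameter, and the type $R_\lambda[\dim R_\lambda,m,n]^*$, after which (\ref{p32}) and (\ref{s2}) carry out the substantive work.
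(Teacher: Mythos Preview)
Your proof is correct and follows essentially the same approach as the paper: both invoke Popescu's theorem (\ref{Popescu}) to reduce to the case of a regular $k$-spot with infinite residue field, where (\ref{p32}) and (\ref{s2}) apply. The paper is terser about extracting the infinite subfield (it simply says ``since tr-deg $K/k\geq 1$, we can assume the residue field of $R_j$ is infinite'') and deduces the $A$ and $A\otimes_R R(T)$ cases from the already-proved $A_f$ case via the argument of (\ref{s2}) rather than by a parallel colimit passage, but the substance is identical.
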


\begin{proof}
$(i)$ Assume $P$ is a projective $A_f$-module. By (\ref{Popescu}), $R$
  is a filtered inductive limit of some regular spots $(R_i)_{i\in I}$
  over $\BZ$, in particular over the prime subfield of $R$. Further,
  we may assume that $f$ is an extension of $f'\in R_j$ for some $j$
  and that $f'$ is a regular parameter of $R_j$ (see \cite{Po1}).

Choosing possibly a bigger index $j\in I$, we may assume that $P$ is
extended from $A'_{f'}$, where
$A'=R_j[Y_{1},\ldots, Y_{m}, f_1(l_1)^{-1},\ldots, f_n(l_n)^{-1}]$ is
a ring of type $R_j[d,m,n]^{*}$. Since tr-deg $K/k\geq 1$,
we can assume that the residue field of $R_j$ is infinite. By
(\ref{p32}), $P'$ and hence $P$ is free.

$(ii)$ Following the proof of (\ref{s2}), projective modules over $A$
and $A\ot_R R(T)$ are free.  $\hfill \gj$
\end{proof}

%%%%%%%%%%%%%%%%%%%%%%%%%%%%%%%% %%%%%%%%%%%%%%%%%%%%%%%%%

\subsection{Finite residue-field case}

The following result is an analogue of (\ref{p32}) in case residue
field of $R$ is finite and generalizes Bhatwadekar-Rao's (\cite{BR},
Theorem 3.8).

\begin{theorem}{\label{p35}}
Let $R$ be a regular $\BF_q$-spot of dimension $d$, $f$ a regular
parameter of $R$ and
$A=R[Y_1,\ldots,Y_m,f_1(l_1)^{-1},\ldots,f_n(l_n)^{-1}]$ a ring of
type $R[d,m,n]^{*}$. Then every projective $A_f$-module of rank $\geq
d-1$ is free.
\end{theorem}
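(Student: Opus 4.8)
The plan is to mirror the strategy of the infinite-residue-field case (\ref{p32}), but to replace the use of Gabber's theorem (\ref{g}) — which only needs a field but produces no control on rank — with Rao-Popescu-type rank bounds that survive over a finite field $\BF_q$. The guiding principle is the same: find a smaller regular $\BF_q$-subspot $R'\subset R$ that is an honest polynomial localization, over which the projective module is defined, and then induct on $\dim R$ by stripping off one variable at a time using the monic-polynomial analytic-isomorphism trick.

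First I would set up the reduction. Let $P$ be a projective $A_f$-module of rank $\ge d-1$. Localizing at the multiplicative set $T=R-\{0\}$ gives a ring of type $(\text{fraction field})[0,m+n\text{-ish}]$, so $T^{-1}P$ is free by (\ref{g}); hence $P_g$ is free for some $g\in T$, and I may arrange $(g,f)$ to be a regular sequence. Now invoke the Bhatwadekar-Nashier presentation (\ref{BN}): since $\BF_q$ is perfect, there is a regular $K$-spot $R'=K[Z_1,\ldots,Z_d]_{(\phi(Z_1),Z_2,\ldots,Z_d)}$ with $f=Z_d$ and an analytic isomorphism $R'\subset R$ along some $h\in gR\cap R'$, where $K/\BF_q$ is finite. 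Forming $A'$ over $R'$ of the same type $R'[d,m,n]^*$, the analytic isomorphism lifts to $A'\subset A$ along $h$; since $P_h$ is free, (\ref{ai}) shows $P$ is extended from $A'_{Z_d}$. Thus it suffices to treat the case $R=K[Z_1,\ldots,Z_d]_{(\phi(Z_1),Z_2,\ldots,Z_d)}$ with $K$ finite, and to show projective $A_{Z_d}$-modules of rank $\ge d-1$ are free.

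Next comes the induction on $d$. The base case $d\le 2$ should follow as before: $R_{Z_d}$ becomes a UFD of dimension $\le 1$, so (\ref{cor1}) and (\ref{g}) finish it (one must check the rank hypothesis $r\ge d-1$ is compatible, which is mild for small $d$). For the inductive step I would run the homogeneous-localization argument of (\ref{p32}): let $S$ be the nonzero homogeneous polynomials of $C=\BF_q[Z_2,\ldots,Z_d]$ (working over the prime field now, not $K$, since $K$ is finite and linear changes of variable must be done over an infinite field), realize $R_{Z_dS}$ as a localization of $C_S[Z_1]$, and use that $C_S$ is a UFD of dimension $\le 1$ together with (\ref{cor1}) to see $P_S$ is free, giving $P_F$ free for some homogeneous $F\in S$. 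Then the subspot $\wt R=\BF_q[Z_1,Z_3,\ldots,Z_d]_{(\ldots)}$ satisfies $\wt R[Z_2]\subset R$ analytically along $F$; provided $F$ can be arranged monic in $Z_2$, (\ref{ai}) exhibits $P$ as extended from $\wt A_{Z_d}$, a ring of type $\wt R_{Z_d}[d-2,m+1,n]$, and induction on $d$ closes the loop.

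The main obstacle — and the whole reason this is a separate theorem with a rank hypothesis — is exactly the linear change of variables making $F$ monic in $Z_2$. Over an infinite residue field (as in (\ref{p32})) one simply chooses generic coefficients; over $\BF_q$ this is unavailable, and this is where the $\text{rank}\ge d-1$ bound and the finite-field techniques of Bhatwadekar-Rao (\cite{BR}, Theorem 3.8) must enter to force freeness despite the failure of genericity. I expect the crux to be importing that finite-field normalization/cancellation input and verifying that the rank drops compatibly (from $\ge d-1$ over $A_{Z_d}$ to the required bound over $\wt A_{Z_d}$) at each inductive stage, so that the extended-from-subspot conclusions can actually be upgraded to freeness rather than merely to extendedness.
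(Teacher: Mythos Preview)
Your reduction to $R=K[Z_1,\ldots,Z_d]_{(\phi(Z_1),Z_2,\ldots,Z_d)}$ with $f=Z_d$ via (\ref{BN}), and the base case $d\le 2$ via (\ref{cor1}), are correct and match the paper. The gap is in the inductive step: you correctly diagnose that the homogeneous-localization argument of (\ref{p32}) breaks down because over a finite $K$ you cannot force the homogeneous $F$ to be monic in $Z_2$ by a linear change of variables, but you do not actually resolve this; ``importing finite-field techniques of \cite{BR}'' is a hope, not an argument, and in fact no such normalization is used.

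The paper's inductive step is organized differently and avoids the obstacle altogether. First observe that projective $A_f$-modules are \emph{stably free} (since $R_f$ is regular), so for rank $\ge d$ the cancellation theorem (\ref{p33}) already gives freeness. The only case left is rank $=d-1$. Now, instead of localizing at homogeneous polynomials, localize at the single regular parameter $Z_{d-1}$. Writing $\wt R=K[Z_1,\ldots,Z_{d-1}]_{(\phi(Z_1),Z_2,\ldots,Z_{d-1})}$, the ring $R_{Z_dZ_{d-1}}$ is a localization of $\wt R_{Z_{d-1}}[Z_d]$, so $P_{Z_{d-1}}$ is defined over a ring of type $\wt R_{Z_{d-1}}[d-2,m+1,n+1]$; since $P$ is stably free of rank $d-1\ge (d-2)+1$, (\ref{p33}) gives $P_{Z_{d-1}}$ free. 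Then, with $R'=K[Z_1,\ldots,Z_{d-2},Z_d]_{(\phi(Z_1),Z_2,\ldots,Z_{d-2},Z_d)}$, the inclusion $R'_{Z_d}[Z_{d-1}]\subset R_{Z_d}$ is an analytic isomorphism along $Z_{d-1}$ itself---no genericity needed---so (\ref{ai}) shows $P$ is extended from a ring of type $R'_{Z_d}[d-2,m+1,n]^*$, and the induction on $d$ applies. The point you are missing is that the rank hypothesis $\ge d-1$, combined with stably-freeness and (\ref{p33}), lets you get freeness after inverting any \emph{one} additional regular parameter, which makes the variable-stripping step canonical rather than generic.
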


\begin{proof}
As in (\ref{p32}), using (\ref{BN}), we can assume $R=K[Z_1,\ldots,
  Z_d]_{(\phi(Z_1),Z_2,\ldots, Z_d)}$ and $f=Z_d,$ where $K\supseteq
\BF_q$ may be a finite field.  Let $P$ be a projective $A_f$-module of
rank $r\geq d-1$. Note that projective $A_f$-modules are stably
free and $\dim R_f=d-1$. Hence if $r\geq d$, then $P$ is free by
(\ref{g}, \ref{p33}). Therefore, we need to prove the result in case
$r=d-1$. We use induction on $d$.

If $d\leq 2$, then $R_f$ is a UFD of dimension $1$ and we are done by
(\ref{cor1}). Assume $d>2$.  If $\wt R=K[Z_1,\ldots,
  Z_{d-1}]_{(\phi(Z_1),Z_2,\ldots, Z_{d-1})}$, then $\wt R_{Z_{d-1}}$
is of dimension $d-2$. Since $R_{Z_dZ_{d-1}}$ is a localization of $\wt
R_{Z_{d-1}}[Z_d]$, we can find $h(Z_d) \in \wt R_{Z_{d-1}}[Z_d]$ such
that $P_{Z_{d-1}}$ is defined over $C=\wt
R_{Z_{d-1}}[Z_d,h(Z_d)^{-1},Y_1,\ldots,Y_m,f_1(l_1)^{-1},\ldots,f_n(l_n)^{-1}]$.
Since $C$ is of type $\wt R_{Z_{d-1}}[d-2,m+1,n+1]$ 
by (\ref{p33}), $P_{Z_{d-1}}$ being stably free is free.

If $R'=K[Z_1,\ldots, Z_{d-2},Z_d]_{(\phi(Z_1),Z_2,\ldots,
  Z_{d-2},Z_d)}$, then $R'_{Z_d}[Z_{d-1}]\subset R_{Z_d}$ is an
analytic isomorphism along $Z_{d-1}$ (see \cite{BR}, page 803). If
$A'=R'_{Z_d}[Z_{d-1},Y_1,\ldots,Y_m,f_1(l_1)^{-1},\ldots,f_n(l_n)^{-1}]$
then $A'_{Z_d}\subset A_{Z_d}$ is also an analytic isomorphism along
$Z_{d-1}$. Using $P_{Z_{d-1}}$ is free, $P$ is extended from
$D=R'_{Z_d}[Z_{d-1},Y_1,\ldots,Y_m,f_1(l_1)^{-1},\ldots,f_n(l_n)^{-1}]$.
By induction on $d$, projective $D$-modules of rank $\geq d-1$ are
free. Hence $P$ is free.  $\hfill \gj$
\end{proof}
\medskip

The following result is an analogue of (\ref{s2}) in case residue
field of $R$ is finite and follows from (\ref{p35}) by following the
proof of (\ref{s2}).

\begin{corollary}
Let $R$ be a regular $\BF_q$-spot of dimension $d$, and $A$ a ring of
type $R[d,m,n]^{*}$. Then projective modules of rank $\geq d$ over $A$
and $A\ot_R R(T)$ are free.
\end{corollary}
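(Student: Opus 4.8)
The plan is to imitate the proof of Corollary \ref{s2} almost verbatim, substituting the finite residue field result \ref{p35} for \ref{p32} and keeping careful track of ranks. The only genuine difference is bookkeeping: \ref{p32} made \emph{all} projectives free, whereas \ref{p35} only frees modules of rank $\geq \dim - 1$, and this single unit of rank is what propagates into the hypothesis rank $\geq d$.

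First I would treat $A\ot_R R(T)$. Put $X=T^{-1}$; by (\cite{BR}, Corollary 3.5) one has $R(T)=(R_1)_X$ with $R_1=R[X]_{(\mm,X)}$, and $X$ is a regular parameter of $R_1$. Since $R$ is a regular $\BF_q$-spot of dimension $d$, the ring $R_1$ is again a regular $\BF_q$-spot, now of dimension $d+1$, with the same residue field. Setting $A_1=R_1[Y_1,\ldots,Y_m,f_1(l_1)^{-1},\ldots,f_n(l_n)^{-1}]$, a ring of type $R_1[d+1,m,n]^{*}$ (the data $f_i,b_i\in k$ are unchanged and $R_1\supseteq k$), we get $A\ot_R R(T)=(A_1)_X$. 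Applying \ref{p35} to $R_1$ and its regular parameter $X$ shows that every projective $(A_1)_X$-module of rank $\geq (d+1)-1=d$ is free. This is exactly the bound asserted, and it is where the finite-field case loses one unit of rank relative to \ref{s2}.

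Next, for $A$ itself, let $P$ be a projective $A$-module of rank $\geq d$. Then $P\ot_R R(T)=S^{-1}(P[T])$, where $P[T]=P\ot_A A[T]$ and $S$ is the set of monic polynomials of $R[T]\subseteq A[T]$, is a projective $A\ot_R R(T)$-module of the same rank $\geq d$, hence free by the first part. Therefore $(P[T])_h$ is free over $(A[T])_h$ for some monic $h\in R[T]$. Since $h$ is monic in $T$ and $P[T]$ is projective over the polynomial extension $A[T]$, Suslin's monic inversion theorem (\cite{Su1}, Theorem 1) — used exactly as in the proof of \ref{tt} — gives that $P[T]$ is free over $A[T]$, and specializing $T\mapsto 0$ yields that $P=P[T]/T\,P[T]$ is free over $A$. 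The rank $\geq d$ is preserved throughout by the base change $-\ot_R R(T)$ and by the specialization, so no case escapes the argument.

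The computation is routine and there is no deep new obstacle: the whole proof reduces to a single invocation of \ref{p35} on the $(d+1)$-dimensional spot $R_1$ together with one monic inversion. The only points requiring care are verifying that $R_1=R[X]_{(\mm,X)}$ is again an $\BF_q$-spot of dimension $d+1$ and that the type $R_1[d+1,m,n]^{*}$ is inherited from $A$; both are immediate, and they are precisely what forces the rank bound $\geq d$ in the statement.
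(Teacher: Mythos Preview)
Your proposal is correct and follows exactly the route the paper intends: the paper merely says the corollary ``follows from (\ref{p35}) by following the proof of (\ref{s2})'', and you have carried this out in detail, correctly tracking that $\dim R_1=d+1$ forces the bound $\operatorname{rank}\geq d$. Your use of monic inversion in part (ii) mirrors precisely how the paper invokes it in \ref{s2}(ii) and in the proof of \ref{tt}.
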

 
The proof of following result is exactly same as (\cite{BR},
Proposition 4.1, Theorem 4.2) using (\ref{g}). Hence we omit the
proof.

\begin{theorem}{\label{p36}}
 Let $R=\BF_p[[Z_1,\ldots, Z_d]]$ and $f$ be a regular parameter of
 $R$. If $A$ is a ring of type $R[d,m,n]^{*}$, then projective modules
 over $A,A_f,A\ot_R R(T)$ are free.
\end{theorem}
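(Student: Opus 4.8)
The plan is to reduce all three assertions to the single statement that projective modules over $A_f$ are free, and then to prove that statement by the power-series version of the Bhatwadekar--Rao argument (\cite{BR}, Prop.~4.1 and Thm.~4.2), substituting Gabber's theorem \ref{g} wherever \cite{BR} used freeness over the relevant fibre. For the passage from $A_f$ to the other two rings I would follow \ref{s2} and \ref{p28}: writing $R(T)=R[X]_{(\mm,X)}[1/X]$ with $X=T^{-1}$ (\cite{BR}, Cor.~3.5), the variable $X$ is a regular parameter of the regular local ring $R_1=R[X]_{(\mm,X)}$, whose completion is $\BF_p[[Z_1,\ldots,Z_d,X]]$, so that $A\ot_R R(T)=(A_1)_X$ for a ring $A_1$ of type $R_1[d+1,m,n]^*$ and freeness follows from the $A_f$-case. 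Freeness over $A$ then follows from freeness over $A\ot_R R(T)$ by Suslin's monic inversion \cite{Su1}, exactly as in \ref{s2}(ii) and \ref{p28}(ii).

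For the $A_f$-case I would first normalise $f$. Since the $l_i$ and the localisations $f_i(l_i)^{-1}$ involve only $Y_1,\ldots,Y_m$, a continuous $\BF_p$-algebra automorphism of $R=\BF_p[[Z_1,\ldots,Z_d]]$ (supplied by Cohen structure theory, adjusting $Z_d\mapsto f$) carries the regular parameter $f$ to $Z_d$ without disturbing the $R[d,m,n]^*$-structure. So I assume $f=Z_d$ and run an induction on $d$ parallel to \ref{p35}, but using the completeness of $R$ in place of \ref{BN}. The crucial geometric input is that, with $R'=\BF_p[[Z_1,\ldots,Z_{d-2},Z_d]]$, the inclusion $R'_{Z_d}[Z_{d-1}]\hookrightarrow R_{Z_d}$ is an analytic isomorphism along $Z_{d-1}$, since both sides reduce modulo $Z_{d-1}$ to $\BF_p[[Z_1,\ldots,Z_{d-2},Z_d]]_{Z_d}$.

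The inductive engine then runs as follows. The base case $d\le 2$ makes $R_{Z_d}$ a UFD of dimension $\le1$, so \emph{all} projective $A_f$-modules are free by \ref{cor1} (and \ref{g}). For the step, given a projective $A_f$-module $P$, I would first show $P_{Z_{d-1}}$ is free: it is defined over a ring of type $R''[d-2,m+1,n+1]^*$ with $\dim R''=d-2$, where stable freeness combined with the cancellation of \ref{p33} (valid in rank $\ge (d-2)+1$) forces freeness. The analytic isomorphism above and \ref{ai} then extend $P$ across to a ring $D$ whose base has dimension $d-2$, after which the inductive hypothesis (organised over the class of localisations of formal power-series $\BF_p$-algebras, so as to include $R'_{Z_d}$) finishes the argument.

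The step that does not come for free, and which I expect to be the main obstacle, is that \emph{every} projective $A_f$-module is stably free and that the ranks below $d-1$ are actually reached; this is precisely the feature absent from the $\BF_q$-spot result \ref{p35}, which only attains rank $\ge d-1$, and it is here that completeness of $R$ is indispensable. Following \cite{BR}, Prop.~4.1, I would establish stable freeness by inverting the nonzero elements of $R$ to land in $\mathrm{Frac}(R)[Y_1,\ldots,Y_m,f_1(l_1)^{-1},\ldots,f_n(l_n)^{-1}]$, a ring of type $\mathrm{Frac}(R)[0,m,n]$ over which all projectives are free by Gabber's theorem \ref{g}, and then propagating this triviality back through the tower using that $R$ and each localisation occurring is a UFD of trivial Picard group together with the patching afforded by completeness. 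With stable freeness secured in every rank, the iterated analytic-isomorphism reduction drives an arbitrary rank down to the $d\le2$ base case, where \ref{cor1} delivers freeness without a rank restriction. In short, I expect the bookkeeping to match \cite{BR}, Thm.~4.2 line for line, with \ref{g} replacing their Laurent-polynomial computation at the zero-dimensional fibres.
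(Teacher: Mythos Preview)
Your plan is exactly what the paper intends: it omits the proof and says only to follow \cite{BR}, Proposition~4.1 and Theorem~4.2, substituting Gabber's theorem (\ref{g}) at the zero-dimensional fibres, which is precisely the programme you outline for the $A_f$-case.

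One step in your reduction does not work as written. For $A\ot_R R(T)$ you pass to $R_1=R[X]_{(\mm,X)}$ and then invoke ``the $A_f$-case''. But $R_1$ is \emph{not} a formal power-series ring over $\BF_p$, and noting that its completion is $\BF_p[[Z_1,\ldots,Z_d,X]]$ does not rescue the argument: freeness of projective modules does not descend along the faithfully flat completion map, so you cannot simply pull the power-series result back to $R_1$. The repair is cheaper than the route you propose. The residue field of the regular local ring $R(T)$ is $\BF_p(T)$, which has transcendence degree $1$ over $\BF_p$; hence Theorem~\ref{p28} applies directly to $R(T)$ and yields freeness over $A\ot_R R(T)$ with no power-series input at all. (Equivalently, over $R(T)$ one is in the infinite-residue-field regime, which is how \cite{BR} organises the $R(T)$ case.) The $A$ case then follows from this via Suslin's monic inversion exactly as you say, so only the $A_f$ case genuinely requires the power-series argument of \cite{BR}, Proposition~4.1 that you sketch.
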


%%%%%%%%%%%%%%%%%%%%%%%%%%%%%%%%%%%%%%%%%%%%%%%%%%
\section{Generalization of Rao's results}

In this section, we will generalize some results from \cite{R2}.  We
begin with the following result. It's proof is exactly same as
(\cite{R2}, Theorem 2.1) by using (\ref{p33}) instead of Swan's
result, hence we omit it. The case $t\leq 1$ is (\ref{p36}).

\begin{proposition}\label{p2.1}
 Let $(R, \mm)$ be a formal (or convergent) power series ring of
 dimension $d$ over a field $k.$ Let $g_1,\ldots, g_t$
 be regular parameters of $R$ which are linearly independent modulo
 $\mm^2.$ If $A$ is a ring of type $R[d,m,n]^{*}$, then every projective
 $A_{g_1\ldots g_t}$-module of rank $\geq t-1$ is free.
\end{proposition}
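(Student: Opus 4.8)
The plan is to transcribe Rao's argument (\cite{R2}, Theorem 2.1), replacing his appeal to Swan's cancellation theorem by the cancellation statement (\ref{p33}). First I would put the parameters in standard position. Since $g_1,\ldots,g_t$ are linearly independent modulo $\mm^2$ they form part of a regular system of parameters of the power series ring $R$, so the assignment $Z_i\mapsto g_i$ extends to an automorphism of $R$; as $k$ is fixed and the data $f_i(T)\in k[T]$, $l_i=\sum_j a_{ij}Y_j-b_i$ with $a_{ij},b_i\in k$ defining the type $R[d,m,n]^*$ involve only the $Y_j$, both $A$ and its type are unchanged. Thus I may assume $R=k[[Z_1,\ldots,Z_d]]$ (or its convergent analogue) and $g_i=Z_i$, so the claim becomes: every projective module over $A_{Z_1\cdots Z_t}$ of rank $\ge t-1$ is free.

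I would induct on $t$, the case $t\le 1$ being (\ref{p36}). For the step, let $P$ be projective over $A_{Z_1\cdots Z_t}$ of rank $r\ge t-1$, and set $\wt R=k[[Z_1,\ldots,Z_{t-1},Z_{t+1},\ldots,Z_d]]$, so that $R=\wt R[[Z_t]]$ and the inclusion $\wt R[Z_t]\inj R$ is an analytic isomorphism along $Z_t$. Here $Z_1,\ldots,Z_{t-1}$ are regular parameters of $\wt R$, linearly independent modulo $\mm_{\wt R}^2$. The point of the reduction is that the associated Laurent ring $\wt A_{Z_1\cdots Z_t}=\wt R_{Z_1\cdots Z_{t-1}}[Z_t^{\pm1},Y_1,\ldots,Y_m,f_1(l_1)^{-1},\ldots,f_n(l_n)^{-1}]$ is again of starred type (the Laurent variable $Z_t^{\pm1}$ being the case $l=Z_t$) and has only the $t-1$ parameters $Z_1,\ldots,Z_{t-1}$ of $\wt R$ inverted; by the induction hypothesis its projective modules of rank $\ge t-2$, and hence of rank $r$, are free.

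To carry $P$ into this lower stratum I first trivialize it on a principal open set: localizing at $R-(0)$ and applying Gabber's theorem (\ref{g}) makes $P$ free over the generic fibre, and (\ref{L1.1}) then yields a single nonzero $s\in R$ with $P_s$ free. Following (\ref{p35}) I would replace $s$ by a polynomial $F$ monic in $Z_t$, the stably free module produced after the requisite localization being rendered free by (\ref{p33}), whose hypothesis $r\ge\max\{2,\dim+1\}$ is met because the base dimension has dropped while $r\ge t-1$. Since $\wt R[Z_t]\inj R$ is an analytic isomorphism along the monic $F$, the fiber product diagram of (\ref{ai}) reduces the freeness of $P$ to that of projective modules over the Laurent stratum $\wt A_{Z_1\cdots Z_t}$ and over $\wt A_{Z_1\cdots Z_{t-1}}$, both governed by the induction hypothesis; Suslin's monic inversion (\cite{Su1}, Theorem 1) then transports freeness across $F$ back to $P$ over $A_{Z_1\cdots Z_t}$.

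The principal obstacle lies entirely in this reduction to the Laurent stratum and is twofold. First, because the residue field $k$ may be finite one cannot secure $F$ monic by the homogeneous-localization device of (\ref{p32}) and must instead import the finite-residue-field manoeuvre of (\ref{p35}) to position $F$ correctly relative to the fiber-product corners. Second, one must track the rank through every drop in dimension so that the bound $r\ge t-1$ dominates $\max\{2,\dim+1\}$ at each point where (\ref{p33}) converts a stably free module into a free one. Once the monic $F$ is in hand, the patching of (\ref{ai}) and the Suslin descent are formal.
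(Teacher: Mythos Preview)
Your plan is exactly the paper's: transcribe Rao's argument (\cite{R2}, Theorem 2.1) with Swan's cancellation replaced by (\ref{p33}), the base case $t\le 1$ being (\ref{p36}). One technical point in your elaboration deserves correction: in the formal (or convergent) power-series setting the device that converts the nonzero $s\in R$ trivializing $P$ into a monic $F\in \wt R[Z_t]$, and simultaneously makes $\wt R[Z_t]\hookrightarrow R=\wt R[[Z_t]]$ an analytic isomorphism along $F$, is the Weierstrass preparation/division theorem (after a possibly nonlinear change of variables to make $s$ regular in $Z_t$, which works over any $k$), not the manoeuvre of (\ref{p35}), which is tailored to regular spots; once Weierstrass is in place, the fiber-product descent and the invocation of (\ref{p33}) proceed as you outline.
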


\begin{lemma}{\label{p40}}
 Let $(R, \mm)$ be a regular $k$-spot of dimension $d$ and $S$ a
 multiplicative closed subset of $R$ which contains a regular parameter of
 $R.$ Let $A=R[Y_1,\ldots, Y_m,
   f_1(l_1)^{-1},\ldots,f_n(l_n)^{-1}]$ be a ring of type
 $R[d,m,n]^{*}$. Then every projective $S^{-1}A$-module of rank $\geq
 d-1$ is free.
\end{lemma}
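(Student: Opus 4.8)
The crux is the single–parameter case: once a regular parameter $f\in S$ is fixed, the assertion with $A_f$ in place of $S^{-1}A$ is exactly (\ref{p32}) when the residue field $K$ of $R$ is infinite (which in fact gives freeness in all ranks) and (\ref{p35}) when $K$ is finite. So the genuine task is to transport these single–parameter results across the extra localization by $S$, and the plan is to rerun the geometric reductions underlying (\ref{p32}) and (\ref{p35}) while carrying the multiplicative set $S$ along, inducting on $d=\dim R$.

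First I would reduce to a finitely presented situation. Writing $S^{-1}A=\varinjlim_{s\in S}A_s$ (colimit over $s$ ordered by divisibility), a projective $S^{-1}A$-module $P$ of rank $r\ge d-1$ is extended from $A_s$ for some $s\in S$; after replacing $s$ by $fs$ we may assume $f\mid s$, so that $f$ is a unit in $R_s=R[1/s]$ and, since $\mm$ is no longer a prime of $R_s$, $\dim R_s\le d-1$. Inverting $R\setminus\{0\}$ turns the coefficient ring into the fraction field $Q(R)$, so the localized ring is of type $Q(R)[0,m,n]^{*}$ and $P$ becomes free there by (\ref{g}); hence $P_g$ is free for some nonzero $g\in R$. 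This is the input needed to begin a Nashier–type reduction.

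Next I would argue by induction on $d$. For the base case $d\le 2$, the localization $S^{-1}R$ is factorial (a localization of the regular local, hence UFD, ring $R$) of dimension $\le 1$, so $S^{-1}A$ is of type $(S^{-1}R)[\dim S^{-1}R,m,n]^{*}$ with $\dim S^{-1}R\le 1$, and every projective is free by (\ref{cor1}) (or by (\ref{g}) when $\dim S^{-1}R=0$, since a $0$-dimensional domain is a field). For the inductive step, after arranging $(g,f)$ to be a regular sequence I would apply (\ref{BN}) to replace $R$ by a standard spot $R'=K[Z_1,\dots,Z_d]_{(\phi(Z_1),Z_2,\dots,Z_d)}$ with $f=Z_d$ and $R'\subset R$ an analytic isomorphism along some $h\in gR\cap R'$; since $P_h$ is free, (\ref{ai}) transports the problem, compatibly with $S$, to the standard ring. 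I would then run the Bhatwadekar–Rao dimension drop exactly as in (\ref{p32}) (residue field infinite) and (\ref{p35}) (residue field finite): localize at the homogeneous elements of $k[Z_2,\dots,Z_d]$ to define the module over a one–dimensional factorial base (where freeness is (\ref{cor1})), make a suitable $F$ monic by a linear change of variables when $K$ is infinite or by the finite–field device of (\ref{p35}), and patch along the monic $F$ via an analytic isomorphism $\widetilde R[Z_j]\subset R$ that lowers the dimension to $d-1$, whence the inductive hypothesis applies (the module still has rank $\ge d-1>d-2$).

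The main obstacle is precisely the bookkeeping of $S$ through these reductions: one must verify that the analytic isomorphism of (\ref{BN}) and each fiber–product patching step remain compatible with inverting all of $S$, not merely the single parameter $f$, so that the inductive hypothesis can legitimately be invoked for the lower–dimensional standard ring together with its transported multiplicative set. A secondary difficulty is the boundary rank $r=d-1$: when $\dim R_s=d-1$ the cancellation theorem (\ref{p33}) is off by one (it requires rank $\ge d$), so freeness at the critical rank cannot be obtained by cancellation alone and genuinely forces the sharper inputs (\ref{p32}) and (\ref{p35}); this is also the origin of the hypothesis $r\ge d-1$ rather than $r\ge d$.
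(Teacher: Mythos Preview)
Your overall plan---reduce to a single $s\in S$, find $g$ with $P_g$ free, then run a Nashier reduction (\ref{BN}) and induct on $d$ as in (\ref{p32})/(\ref{p35})---is reasonable in spirit, but the obstacle you flag is real and you have not resolved it. When (\ref{BN}) hands you $R'\subset R$ analytically isomorphic along $h\in gR\cap R'$, the element $s\in R$ need not lie in $R'$, so there is no obvious ring ``$R'_s$'' to descend to; the fiber-product patching of (\ref{ai}) transports $P$ from $A_f=R_f[\ldots]$ to $R'_f[\ldots]$, not from $A_s$ to anything over $R'$. Carrying the full multiplicative set $S$ through this step requires a separate argument (this is exactly what Rao works out in \cite{R2}, Proposition 2.3), and your proposal stops short of supplying it.

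The paper avoids this bookkeeping altogether and takes a shorter route. First, since $\dim S^{-1}R\le d-1$ and every projective over $S^{-1}A$ is stably free (the ring is a localization of a polynomial ring over a field), the cancellation theorem (\ref{p33}) already gives freeness when $\operatorname{rank} P\ge d$. For the boundary rank $d-1$ the paper does not rerun (\ref{p32})/(\ref{p35}); it quotes Rao's reductions in \cite{R2}, Proposition 2.3 verbatim, which handle the $S$-compatibility and leave only the following claim to prove: every stably free module of rank $d-1$ over $R'_{Z_1s}[Y_1,\ldots,Y_m,f_1(l_1)^{-1},\ldots,f_n(l_n)^{-1}]$ is free, where $R'=K[Z_1,\ldots,Z_d]_{(Z_1,\ldots,Z_{d-1},\phi(Z_d))}$. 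This is dispatched in one stroke, with no induction on $d$: writing $\wt R=K[Z_1,\ldots,Z_{d-2},Z_d]_{(Z_1,\ldots,Z_{d-2},\phi(Z_d))}$, the ring $R'_{Z_1s}$ is a localization of $\wt R_{Z_1s}[Z_{d-1}]$, so $P$ is defined over a ring of type $\wt R_{Z_1}[d-2,m+1,n+1]$ (the extra variable $Z_{d-1}$ and its denominator $f(Z_{d-1})$ are absorbed into the $[m,n]$ data). Now $\dim\wt R_{Z_1}=d-2$ and $\operatorname{rank} P=d-1\ge (d-2)+1$, so (\ref{p33}) applies directly and $P$ is free. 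The trick you are missing is this ``peel off one $Z$-variable and shove it into the polynomial part'' maneuver, which converts the off-by-one rank deficit into a situation where cancellation suffices.
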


\begin{proof}
Since $\dim S^{-1}R\leq d-1$, if rank $P>d-1$, then we are done by
(\ref{p33}). Assume that rank $P=d-1$. We will follow the notation and
proof of (\cite{R2}, Proposition 2.3). If we show that every stably
free module $P$ of rank $d-1$ over
$R'_{Z_1s}[Y_1,\ldots,Y_m,f_1(l_1)^{-1},\ldots,f_n(l_n)^{-1}]$ is
free, then remaining proof is exactly same as in \cite{R2}.

Recall that $R'=K[Z_1,\ldots,Z_d]_{(Z_1,\ldots,Z_{d-1},\phi(Z_d))}$.
If $\wt R=K[Z_1,\ldots,Z_{d-2},Z_d]_{(Z_1,\ldots,Z_{d-2},\phi(Z_d))}$,
then $R'_{Z_1s}$ is a localization of $\wt R_{Z_1s}[Z_{d-1}]$.  We can
find $f(Z_{d-1})\in \wt R_{Z_1s}[Z_{d-1}]$ such that $P$ is defined
over $C=\wt
R_{sZ_1}[Z_{d-1},f^{-1},Y_1,\ldots,Y_m,f_1(l_1)^{-1},\ldots,f_n(l_n)^{-1}]$. Since
$C$ is a ring of type $\wt R_{Z_1}[d-2,m+1,n+1]$ and $P$ is stably
free of rank $d-1$, by (\ref{p33}), $P$ is free. This completes the
proof.  $\hfill \gj$
\end{proof}

The following result is immediate from (\ref{p40}).

\begin{corollary}
Let $(R, \mm)$ be a regular $k$-spot of dimension $3$ and $f, g, h$
 regular parameters of $R$ which are linearly independent modulo
$\mm^2.$ Let $A$ be a ring of type $R[3,m,n]^{*}$. Then
projective modules over $A, A_f, A_{fg}$ and $ A_{fgh}$ are free.
\end{corollary}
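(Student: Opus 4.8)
The plan is to dispose of the corollary rank by rank, letting the real input come entirely from Lemma~\ref{p40}. First I would record two structural facts shared by all four rings. Since $R$ is a regular local ring of dimension $3$ it is a UFD, so $R[Y_1,\ldots,Y_m]$ is a UFD, and each of $A$, $A_f$, $A_{fg}$, $A_{fgh}$ is a localization of $R[Y_1,\ldots,Y_m]$ at a multiplicative set and is therefore again a UFD. In particular each is a regular domain with trivial Picard group, so every projective module of rank $\le 1$ over it is free. This clears the rank-$0$ and rank-$1$ cases, which Lemma~\ref{p40} does not reach.

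Next I would treat the three localizations uniformly. Write $A_f=S_1^{-1}A$, $A_{fg}=S_2^{-1}A$, $A_{fgh}=S_3^{-1}A$, where $S_1,S_2,S_3$ are the multiplicative subsets of $R$ generated by $f$; by $f,g$; and by $f,g,h$, respectively (recall $A_{fg}=A[1/f,1/g]$ and $A_{fgh}=A[1/f,1/g,1/h]$, so taking $S_2=\{f^ig^j\}$ and $S_3=\{f^ig^jh^l\}$ gives back these rings). The point is that each $S_i$ contains the regular parameter $f$. Hence Lemma~\ref{p40} applies with $d=\dim R=3$ and shows that every projective $S_i^{-1}A$-module of rank $\ge d-1=2$ is free. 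Combined with the rank-$\le 1$ case above, this proves that projective modules over $A_f$, $A_{fg}$ and $A_{fgh}$ are free in every rank.

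It remains to handle $A$ itself, where Lemma~\ref{p40} is unavailable because no regular parameter has been inverted. If the residue field $K$ is infinite this is immediate from Corollary~\ref{s2}. For the finite residue field case I would pass to $R(T)$: this is again a regular $k$-spot, now with infinite residue field $K(T)$, and $A\otimes_R R(T)$ is of type $R(T)[3,m,n]^{*}$, so Corollary~\ref{s2} (applied with base ring $R(T)$) gives that every projective $A\otimes_R R(T)$-module is free. A projective $A$-module $P$ with $P\otimes_R R(T)$ free is then free by Suslin's monic inversion (\cite{Su1}), exactly as in the proof of \ref{s2}$(ii)$, since $A\otimes_R R(T)$ is the localization of $A[T]$ at the monic polynomials of $R[T]$.

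The step I expect to require the most care is precisely the interface between the two regimes: Lemma~\ref{p40} only delivers freeness in ranks $\ge d-1$, so the low-rank cases genuinely need the UFD observation, and the base ring $A$ (with nothing inverted) falls outside the scope of \ref{p40} and must be routed through Corollary~\ref{s2}, together with the $R(T)$-descent when the residue field is finite. Once one checks that each relevant multiplicative set contains a regular parameter, everything else is bookkeeping.
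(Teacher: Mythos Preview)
Your argument is correct and follows the paper's intended route—the paper records the corollary as ``immediate from (\ref{p40})'', and Lemma~\ref{p40} is indeed the core input for $A_f$, $A_{fg}$, $A_{fgh}$ in ranks $\ge 2$. You have, however, carefully supplied two points the paper leaves implicit: the rank-$\le 1$ case via the UFD property of localizations of $R[Y_1,\dots,Y_m]$, and the case of $A$ itself, where no regular parameter is inverted and (\ref{p40}) does not apply. Your handling of $A$ through Corollary~\ref{s2}, together with the $R(T)$-descent when the residue field is finite (noting that $R(T)=R'[T]_{\p R'[T]}$ is again a regular $k$-spot, now with infinite residue field $K(T)$), is exactly the mechanism behind (\ref{s2})(ii) and (\ref{p28}), so the monic-inversion step is legitimate here as well. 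In short, your proof is the paper's one-line claim made honest.
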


\begin{lemma}\label{p2.7}
Let $k$ be an infinite field, $B=k[Z_1,\ldots, Z_d]$,
$\mm=(Z_1,\ldots, Z_{d-1}, \phi(Z_d))$ a maximal ideal and $R=B_{\mm}$. Let
$A=R[Y_1,\ldots,Y_m,f_1(l_1)^{-1},\ldots,f_n(l_n)^{-1}]$ be a ring of
type $R[d,m,n]^*$ and $h\in k[Z_1,\ldots,Z_t]$.  Then every projective
$A_h$-module of rank $\geq t$ is free.
\end{lemma}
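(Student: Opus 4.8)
The plan is to prove Lemma \ref{p2.7} by induction on $t$, reducing the number of invertible-parameter localizations one at a time until we land on an already-established result. The base case $t\leq 1$ is essentially covered by earlier results: if $h\in k[Z_1]$ then $A_h$ is again a ring of the same type with one more inverted element, and either Proposition \ref{p32} (infinite residue field case) or the low-dimensional analysis via Lemma \ref{cor1} handles freeness of projective modules of rank $\geq 1$. So the substance is the inductive step $t-1\to t$. The key idea, following the pattern used repeatedly in the excerpt (for instance in \ref{p40} and \ref{p32}), is to peel off the variable $Z_t$ and realize $A_h$ as a localization of a polynomial extension of a lower-dimensional spot.

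First I would set up the geometry. Write $\wt B=k[Z_1,\ldots,Z_{t-1},Z_{t+1},\ldots,Z_d]$ with the induced maximal ideal and let $\wt R$ be its localization, so that $R$ is (up to localization) $\wt R[Z_t]$ localized at the appropriate multiplicative set. Given a projective $A_h$-module $P$ of rank $\geq t$, I would first localize at the non-zero homogeneous polynomials in the relevant subpolynomial ring and invoke Gabber's theorem \ref{g} (together with Lemma \ref{L1.1} or \ref{lem3.9}) to find that $P$ becomes free after inverting some element; concretely this lets me define $P$ over a ring $C$ obtained by adjoining one further inverted polynomial, where $C$ is of type $\wt R_{(\cdots)}[d-1,m+1,n+1]^*$ (one fewer dimension, one more variable and inverted function from the absorbed $Z_t$). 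Because such $C$ has the shape required by Proposition \ref{p33}, and because $P$ restricted there is stably free of rank $\geq t > \dim$ of the relevant base, \ref{p33} forces $P$ over $C$ to be free.

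Next I would run the analytic-isomorphism / patching argument. Using that $h\in k[Z_1,\ldots,Z_t]$ and that $k$ is infinite, I can make a linear change of the $Z_1,\ldots,Z_t$ so that a suitable element $F$ (arising from where $P$ became free) is monic in one of the variables; this is exactly the maneuver used in \ref{p32} and \ref{p35}. Then $\wt R[Z_t]\subset R$ is an analytic isomorphism along $F$ in the sense of Definition \ref{ai}, which passes to an analytic isomorphism $\wt A_{(\cdots)}\subset A_{(\cdots)}$ along $F$; since $P$ is free after inverting $F$, Definition \ref{ai} yields that $P$ is extended from the smaller ring $\wt A_h$, which is of type $\wt R_{(\cdots)}[d-1,m+1,n]^*$ with $h\in k[Z_1,\ldots,Z_{t-1}]$ (the variable $Z_t$ having been absorbed). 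By the induction hypothesis applied with parameter $t-1$ in dimension $d-1$, projective modules of rank $\geq t-1$ over this smaller ring are free, so $P$ is free.

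The main obstacle I anticipate is bookkeeping: one must verify at each reduction that the ring genuinely remains of type $R'[\,\cdot\,,m',n']^*$ with the starred hypotheses ($f_i\in k[T]$, $b_i\in k$) preserved and, crucially, that absorbing $Z_t$ as an extra variable does not destroy the $l_i\in k[Y_1,\ldots,Y_i]$ normalization needed to invoke \ref{p33}. The delicate point is matching rank thresholds: after dropping a dimension the rank bound $\geq t$ must still exceed the dimension of the new base so that \ref{p33} applies to kill the stably free module, and after analytic descent the bound $\geq t-1$ must align with the induction hypothesis in dimension $d-1$. I would also need to be careful that the monic-polynomial change of variables (requiring $k$ infinite) is legitimately available, which is why infiniteness of $k$ is hypothesized. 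Once these type-preservation and rank-tracking checks are in place, the argument is a routine instance of the patch-and-descend template already deployed in \ref{p32}, \ref{p35}, and \ref{p40}, so I would keep those computations brief and cite the parallel steps.
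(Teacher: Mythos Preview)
Your overall template (localize, apply cancellation, then descend via an analytic isomorphism along a monic) is the right one, but the choice of induction variable and the variable you peel off are wrong, and this creates a genuine gap.

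You induct on $t$ and remove $Z_t$. But $h\in k[Z_1,\ldots,Z_t]$ may genuinely involve $Z_t$, so after absorbing $Z_t$ as an extra $Y$-variable the element $h$ is \emph{not} in $k[Z_1,\ldots,Z_{t-1}]$; it now involves one of the $Y$-variables. Nor can you fold $h^{-1}$ into the list $f_1(l_1)^{-1},\ldots,f_n(l_n)^{-1}$: those must each be polynomials in a \emph{single} linear form, whereas $h$ is an arbitrary polynomial in $Z_1,\ldots,Z_t$. So the descended ring $\wt A_h$ is not of type $\wt R_{(\cdot)}[d-1,m+1,n]^{*}$ with $h\in k[Z_1,\ldots,Z_{t-1}]$, and the induction hypothesis does not apply. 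A secondary symptom of the same problem is your rank check ``$t>\dim$ of the relevant base'': removing only $Z_t$ leaves a base of dimension $d-1$, and for $t<d-1$ the inequality $t>d-1$ fails, so Theorem~\ref{p33} does not fire.

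The paper fixes this by inducting on $d$, not on $t$. First the case $t=d$ is handled directly (if $h\in\mm$ then $\dim R_h\le d-1$ and Theorem~\ref{p33} applies; if $h\notin\mm$ then $R_h=R$ and Corollary~\ref{s2} applies). For $t<d$ one localizes at $S=k[Z_{t+1},\ldots,Z_d]\setminus\{0\}$, the variables \emph{not} appearing in $h$; this collapses the base to dimension $t$ and the $t=d$ case gives $P_S$ free. One then finds $F\in S$ with $P_F$ free, makes $F$ monic in $Z_d$ by a change among $Z_{t+1},\ldots,Z_d$, and descends along $F$ to a ring over $B'=k[Z_1,\ldots,Z_{d-1}]_{(Z_1,\ldots,Z_{d-1})}$ with $Z_d$ absorbed as an extra $Y$-variable. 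Crucially $h$ is untouched: it still lies in $k[Z_1,\ldots,Z_t]$ inside the new $(d-1)$-dimensional base, so the induction on $d$ goes through with the \emph{same} $t$. Rewriting your argument to peel off $Z_d$ rather than $Z_t$, and to induct on $d$ down to $d=t$, will make it work.
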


\begin{proof}
Assume $t=d$. If $h\in \mm$, then $\dim R_h=d-1$ and the result
follows from (\ref{p33}). If $h\not\in \mm$, then $R_h=R$ and we are
done by (\ref{s2}). The proof in case $t<d$ is
similar to (\cite{R2}, Proposition 2.7), hence we only give a
sketch.

We can find $f\in B-\mm$ such that $P$ is defined over
$B[(fh)^{-1},Y_1,\ldots,Y_m,f_1(l_1)^{-1},\ldots,f_n(l_n)^{-1}]$.  If
$S=k[Z_{t+1},\ldots,Z_d]-(0)$, then $P_S$ is defined over $\wt
R[Y_1,\ldots,Y_m,f_1(l_1)^{-1},\ldots,f_n(l_n)^{-1}]$, where $\wt
R=K[Z_1,\ldots,Z_t]_{\mm_1}[(fh)^{-1}]$ with $K=k(Z_{t+1},\ldots,Z_d)$
and $\mm_1=(Z_1,\ldots,Z_t)$.  Since rank $P\geq t$ and $\dim \wt R
\leq t$, $P_S$ is free ($t=d$ case).

Proceed as in \cite{R2}, we get that if
$B'=k[Z_1,\ldots,Z_{d-1}]_{(Z_1,\ldots,Z_{d-1})}$, then $P$ is extended from
$C_h$, where $C=
B'[Z_d,Y_1,\ldots,Y_m,f_1(l_1)^{-1},\ldots,f_n(l_n)^{-1}]$. Since
$C_h$ is of type $B'_h[d',m+1,n]^*$, where $d'\leq d-1$, by induction on
$d$, $P$ is free.  $\hfill \gj$
\end{proof}
 
\begin{lemma}\label{p2.8}
Let $K$ be an infinite field and $R=K[Z_1,\ldots, Z_d]_{\mm}$, where
$\mm=(Z_1,\ldots, Z_{d-1}, \phi(Z_d))$ is a maximal ideal. Fix $q>0$
an integer such that $d\geq 2q-1$.  Let $B=R_{hg_1\ldots g_k}$, where
$h\in K[Z_1,\ldots,Z_p]$ with $1\leq p< q$ and $g_1,\ldots,g_k$ are
regular parameters of $\mm$ with $Z_1,\ldots,Z_p,g_1,\ldots,g_k$
linearly independent modulo $\mm^2$. Let
$A=B[Y_1,\ldots,Y_m,f_1(l_1)^{-1},\ldots,f_n(l_n)^{-1}]$ be a ring of type
$B[d-1,m,n]^{*}$ and $P$ a projective $A$-module of rank $\geq d-q$. Then
there exist $g\in k[Z_1,\ldots,Z_p]$ such that $P_g$ is free.
\end{lemma}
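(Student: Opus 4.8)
The plan is to induct on the number $k$ of inverted regular parameters, using Lemma \ref{p2.7} as the engine for the base case and an analytic-isomorphism patching argument (in the spirit of \cite{R2}, Proposition 2.8, and of Step~1 of \ref{tt}) to strip off one parameter at a time. For the base case $k=0$ we have $B=R_h$ with $h\in K[Z_1,\ldots,Z_p]$, so $A=(R[Y_1,\ldots,Y_m,f_1(l_1)^{-1},\ldots,f_n(l_n)^{-1}])_h$ is precisely the ring in \ref{p2.7} with $t=p$. Since $p<q$ and $d\geq 2q-1$, the rank hypothesis gives $\operatorname{rank}P\geq d-q\geq q-1\geq p=t$, so \ref{p2.7} forces $P$ to be free and we may take $g=1\in K[Z_1,\ldots,Z_p]$.

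For the inductive step ($k\geq 1$) I would write $B_0=R_{hg_1\cdots g_{k-1}}$ and $A_0=B_0[Y_1,\ldots,Y_m,f_1(l_1)^{-1},\ldots,f_n(l_n)^{-1}]$, so that $A=(A_0)_{g_k}$ and $A_0$ is again of type $B_0[d-1,m,n]^*$ with $Z_1,\ldots,Z_p,g_1,\ldots,g_{k-1}$ linearly independent modulo $\mm^2$. First I would replace $g_k$ by a coordinate: since $Z_1,\ldots,Z_p,g_1,\ldots,g_k$ are independent modulo $\mm^2$, Proposition \ref{BN} together with a linear change of the $Z_j$ lets me arrange, after an analytic isomorphism, that $g_k$ becomes a monic polynomial in one variable (say $Z_d$) over the remaining ones, without disturbing the type $[d-1,m,n]^*$ or the independence of the other parameters. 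Next I would make $P$ free after a generic localization: inverting the nonzero elements of a polynomial ring in the variables complementary to $Z_1,\ldots,Z_p$ and to $g_k$ drops the Krull dimension of the base, and a dimension count using $d\geq 2q-1$ shows the resulting base has dimension at most $\operatorname{rank}P$, so the freeness results \ref{g} and \ref{p33} make the localized module free. Clearing denominators yields a single polynomial $F$, which — after a further linear change, permissible because $K$ is infinite — may be taken monic in the coordinate carrying $g_k$, with $P_F$ free.

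With $P_F$ free and $F$ monic, I would set up the fiber-product diagram of \ref{ai} for $A_0\hookrightarrow A=(A_0)_{g_k}$ through this monic element, patch $P$ with a free module over the relevant localization, and thereby extend $P$ to a projective $A_0$-module $P'$ with $(P')_{g_k}\iso P$; Suslin's monic inversion theorem (\cite{Su1}, Theorem~1) is what guarantees the patched module descends. Since $P'$ has the same rank as $P$, which is $\geq d-q$, and $A_0$ carries only $k-1$ inverted parameters while the constraints $d\geq 2q-1$ and $p<q$ are untouched, the induction hypothesis produces $g\in K[Z_1,\ldots,Z_p]$ with $(P')_g$ free; then $P_g\iso((P')_g)_{g_k}$ is free, completing the step.

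The main obstacle is the coordinatization: replacing the regular parameter $g_k$ by a genuine monic coordinate via \ref{BN} while simultaneously preserving that $A_0$ is of type $B_0[d-1,m,n]^*$ in the prescribed form and keeping $Z_1,\ldots,Z_p,g_1,\ldots,g_{k-1}$ as an independent system, so the inductive hypothesis applies verbatim. Closely tied to this is the bookkeeping of the three numerical constraints: they are unaffected by decreasing $k$, but one must verify that the intermediate generic localization genuinely lands in dimension $\leq d-q$ so that the freeness inputs \ref{g}, \ref{p2.7}, \ref{p33} apply. Once the coordinate change and these dimension counts are in place, the patching and monic-inversion steps are routine and parallel to \cite{R2}, Proposition~2.8.
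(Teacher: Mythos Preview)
Your base case via (\ref{p2.7}) matches the paper exactly, and the overall plan (induct on $k$, localize to trivialize, then descend) is right. But the inductive step as you describe it has two gaps. First, the coordinatization and localization: the paper does not invoke (\ref{BN}); since $Z_1,\ldots,Z_p,g_1,\ldots,g_k$ are linearly independent modulo $\mm^2$, a $K$-linear change of $Z_{p+1},\ldots,Z_{d-1}$ (fixing $Z_1,\ldots,Z_p$, so $h$ and the type-$*$ data survive) already arranges $g_i=Z_{p+i}$ for $i\le\min(k,q-p)$. One then localizes at $S=K[Z_1,\ldots,Z_q]\setminus\{0\}$, dropping the base to a regular local ring $\wt R$ of dimension exactly $d-q$ in which the surviving $g_j$ (those with $j>q-p$) remain regular parameters. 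Freeness of $S^{-1}P$ follows from (\ref{s2}) when no $g_j$ survive, and from (\ref{p40}) otherwise --- both calibrated to rank $\ge d-q$. Your appeal to (\ref{g}) and (\ref{p33}) does not suffice: (\ref{p33}) yields cancellativity only for rank $\ge\dim+1$, and your ``complementary'' localization is too vague to pin down the dimension in the first place.

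Second, the descent: you cannot extend $P$ from $A=(A_0)_{g_k}$ back to $A_0$ by the mechanism you describe. There is no module on the closed complement of $g_k$ to patch with, and Suslin's monic inversion concerns polynomial extensions $C\hookrightarrow C[T]$, not localizations $A_0\to (A_0)_{g_k}$. What Rao's argument (to which the paper defers) actually does is: once $P_F$ is free with $F\in K[Z_1,\ldots,Z_q]$, use a $K$-linear change of $Z_{p+1},\ldots,Z_q$ (this is where $p<q$ and $K$ infinite enter, keeping $h\in K[Z_1,\ldots,Z_p]$ fixed) to make $F$ monic in $Z_q$, and then descend along an analytic isomorphism in $Z_q$. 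Since $g_{q-p}=Z_q$ is among the inverted parameters, this produces a descended module over a ring with $k-1$ inverted parameters, to which the induction hypothesis applies.
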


\begin{proof}
We follow the proof and notations of (\cite{R2}, Proposition 2.8) and
indicate the necessary changes. If $k=0$, then $B=R_h$ with $h\in
K[Z_1,\ldots,Z_p]$. In this case, using (\ref{p2.7}), $P$ itself is
free. Assume $k>0$ and use induction on $k$. Proceed as in \cite{R2}
using (\ref{p40}). Let $S=K[Z_1,\ldots,Z_q]-(0)$.
We only need to show that $S^{-1}P$ is free. Remaining
arguments are same as in \cite{R2}. 

Recall that $g_1=Z_{p+1},\ldots,g_{q-p}=Z_q$.  Write
$\wt R=K(Z_1,\ldots,Z_q)[Z_{q+1},\ldots,Z_d]_{\mm'}$, where
$\mm'=(Z_{q+1},\ldots,Z_{d-1},\phi(Z_d))$. Then $S^{-1}P$
is defined over $C=\wt R_{g_{q-p+1}\ldots g_k}
[Y_1,\ldots,Y_m,f_1(l_1)^{-1},\ldots,f_n(l_n)^{-1}]$.  

Assume $k< q-p+1$. Then $C=\wt
R[Y_1,\ldots,Y_m, f_1(l_1)^{-1},\ldots,f_n(l_n)^{-1}]$ and $S^{-1}P$ 
is free, by (\ref{s2}). 
If $k\geq q-p+1$, use (\ref{p40}) to
conclude that $S^{-1}P$ is free.  $\hfill \gj$
\end{proof}

\begin{theorem}\label{t2.9}
Let $(R,\mm)$ be a regular $k$-spot of dimension $d$ with infinite
residue field. Let $A=R_{g_1\ldots
  g_t}[Y_1,\ldots,Y_m,f_1(l_1)^{-1},\ldots,f_n(l_n)^{-1}]$ be a ring
of type $R_{g_1\ldots g_t}[d-1,m,n]^{*}$, where $g_1,\ldots,g_t$
are regular parameters of $R$ which are linearly independent modulo
$\mm^2$. Then every projective
$A$-module $P$ of rank $r\geq min\{t,[d/2]\}$ is free.
\end{theorem}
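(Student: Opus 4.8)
The plan is to transcribe Rao's proof of (\cite{R2}, Theorem 2.9), replacing each of his Laurent-polynomial inputs by the overring versions established earlier in this section. First I would reduce to a concrete base ring. Inverting the nonzero elements of $R$ makes $P$ free by Gabber (\ref{g}), so there is a nonzero $g\in R$ with $P_g$ free, and after enlarging $g$ we may assume $g$ lies in the maximal ideal. By Nashier (\ref{BN}) there is an infinite field $K/k$ and a regular $K$-spot $R'=K[Z_1,\ldots,Z_d]_{\mm}$ with $\mm=(Z_1,\ldots,Z_{d-1},\phi(Z_d))$ such that $R'\subset R$ is an analytic isomorphism along some $h\in gR\cap R'$; after extending $g_1,\ldots,g_t$ to a regular system of parameters and changing coordinates I would arrange $g_i=Z_i$. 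The induced overring inclusion $A'\subset A$ is then an analytic isomorphism along $h$, and since $P_h$ is free, (\ref{ai}) shows $P$ is extended from $A'$. Hence it suffices to treat $R=K[Z_1,\ldots,Z_d]_{\mm}$ with $K$ infinite and $g_i=Z_i$.

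Next I would induct on $t$. When $t=1$ the ring $A$ is $(R[Y_1,\ldots,Y_m,f_1(l_1)^{-1},\ldots,f_n(l_n)^{-1}])_{g_1}$ with $g_1$ a regular parameter, so every projective $A$-module is free by (\ref{p32}); the remaining low-dimensional cases fall to (\ref{g}) and (\ref{cor1}). For the inductive step I would separate the two regimes dictated by the bound $\min\{t,[d/2]\}$. In the regime $t\geq[d/2]$ the binding condition is $r\geq[d/2]$, and here I would invoke the dévissage (\ref{p2.8}) with $q=[(d+1)/2]$, so that $d\geq 2q-1$ and $d-q=[d/2]\leq r$, taking $g_1=Z_1$ in the role of $h$ (so $p=1$) and $g_2,\ldots,g_t$ as the regular parameters; this produces $g\in k[Z_1,\ldots,Z_p]$ with $P_g$ free. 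In the regime $t<[d/2]$, where only $r\geq t$ is available, I would instead peel off the last inverted parameter $g_t$, passing from $R_{g_1\cdots g_t}$ to $R_{g_1\cdots g_{t-1}}$ via an analytic isomorphism that turns $g_t$ into a genuine polynomial variable, and then apply the inductive hypothesis, whose bound $\min\{t-1,[d/2]\}$ is still met since $r\geq t>t-1$.

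In either regime the passage from ``$P_g$ free'' to ``$P$ free'' is the now-standard patching used in (\ref{p32}) and (\ref{p35}): because $K$ is infinite, a linear change of coordinates makes the relevant element monic, one forms the fibre-product square of an analytic isomorphism along this monic polynomial so that $P$ is extended from a ring with one fewer inverted relation or a lower-dimensional base, and finally Suslin's monic inversion (\cite{Su1}, Theorem 1), together with (\ref{p40}), (\ref{p2.7}) and the cancellation theorem (\ref{p33}), forces $P$ itself to be free. The main obstacle I anticipate is genuinely delicate bookkeeping rather than a new idea: at each reduction one must verify that the intermediate ring remains of type $R''[d'',m'',n'']^{*}$ with the structural constraints $l_i\in k[Y_1,\ldots,Y_i]$ and $a_{ii}\neq 0$ intact, that the inverted factors $f_i(l_i)$ do not obstruct the analytic-isomorphism patching or the application of monic inversion, and that the rank of $P$ stays above the shifting bound $\min\{\cdot,[d/2]\}$ throughout, so that (\ref{p2.8}) and the inductive hypothesis genuinely apply.
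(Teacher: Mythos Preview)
Your proposal is essentially the same approach as the paper's: both transcribe Rao's proof of \cite[Theorem 2.9]{R2}, reducing via (\ref{BN}) to $R'=K[Z_1,\ldots,Z_d]_{(Z_1,\ldots,Z_{d-1},\phi(Z_d))}$ with $g_i=Z_i$, and then feeding in the overring inputs (\ref{p40}), (\ref{p2.8}), (\ref{p2.7}), (\ref{p33}) in place of Rao's Laurent-polynomial versions. The paper's own write-up is terser—it singles out the step of showing $S^{-1}P$ free for $S=k[Z_1,\ldots,Z_q]-(0)$ via (\ref{p40}) and defers everything else to \cite{R2}—while you spell out the induction on $t$ and the two regimes more explicitly, but the argument is the same. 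One small point: the constraints ``$l_i\in k[Y_1,\ldots,Y_i]$, $a_{ii}\neq 0$'' you list among the bookkeeping are \emph{not} part of the type $R[d,m,n]^*$ and are not needed here; they arise only in (\ref{tt}) and its remark.
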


\begin{proof}
We will follow the proof and notations of (\cite{R2}, Theorem 2.9). If
we show that $S^{-1}P$ is free, then rest of the argument is same as
in \cite{R2}. 
Note $R'=k[Z_1,\ldots,Z_d]_{(Z_1,\ldots,Z_{d-1},\phi(Z_d))}$
and $P$ is defined over $R'_{Z_1\ldots
  Z_t}[Y_1,\ldots,Y_m,f_1(l_1)^{-1},\ldots,f_n(l_n)^{-1}]$.  Write
$S=k[Z_1,\ldots,Z_q]-(0)$ and $\wt
R=K(Z_1,\ldots,Z_q)[Z_{q+1},\ldots,Z_d]_{(Z_{q+1},\ldots,Z_{d-1},\phi(Z_d))}$.
Then $R'$ is a localization
of $\wt R$. We can find $h_1\in K[Z_1,\ldots,Z_d]$ such that $S^{-1}P$
is defined over $C=\wt R_{h_1Z_{q+1}\ldots
  Z_t}[Y_1,\ldots,Y_m,f_1(l_1)^{-1},\ldots,f_n(l_n)^{-1}]$.  Since
$\dim \wt R=d-q$ which is $q$ if $d$ is even and $q+1$ when $d$ is
odd, $S^{-1}P$ is free, by (\ref{p40}). 
$\hfill \gj$
\end{proof}
\medskip 

The following result is an analog of (\ref{p2.1}) for regular
$k$-spots in the geometric case. Recall that a local ring $(R,\mm)$ is
said to have a coefficient field if $R$ contains a subfield $K$
isomorphic to $R/\mm$. The proof is exactly same as of (\cite{R2},
Theorem 2.12) using above results. Hence we omit the proof.

\begin{theorem}\label{t2.12}
Let $(R,\mm)$ be a regular $k$-spot with infinite residue field. Let
$g_1,\ldots,g_t$ be regular parameters of $R$ which are linearly
independent modulo $\mm^2$. Assume that $R/(g_1)$ contains a
coefficient field. If $A=R_{g_1\ldots
  g_t}[Y_1,\ldots,Y_m,f_1(l_1)^{-1},\ldots,f_n(l_n)^{-1}]$ is of type
$R_{g_1\ldots g_t}[d-1,m,n]^{*}$, then every projective $A$-module $P$ of
rank $\geq t-1$ is free.
\end{theorem}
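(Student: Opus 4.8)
The plan is to run Rao's induction for (\cite{R2}, Theorem 2.12), the hypothesis that $R/(g_1)$ contains a coefficient field serving exactly the purpose that the canonical coefficient field serves in the power-series result \ref{p2.1}. I would induct on $t$. The base case $t=1$ asks that every projective module over $R_{g_1}[Y_1,\ldots,Y_m,f_1(l_1)^{-1},\ldots,f_n(l_n)^{-1}]$ be free; writing this ring as $A'_{g_1}$ with $A'=R[Y_1,\ldots,Y_m,f_1(l_1)^{-1},\ldots,f_n(l_n)^{-1}]$ of type $R[d,m,n]^{*}$ and $g_1$ a regular parameter, this is precisely \ref{p32}, the infinite residue field hypothesis being available. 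For the inductive step I would first normalize $R$ through Nashier's presentation \ref{BN}, writing $R\cong K[Z_1,\ldots,Z_d]_{(Z_1,\ldots,Z_{d-1},\phi(Z_d))}$ up to analytic isomorphism with $K$ infinite, and arranging via an invertible linear change of the $Z_i$ (legitimate because the residue field is infinite and $g_1,\ldots,g_t$ are independent modulo $\mm^2$) that $g_i$ becomes $Z_i$ for $1\le i\le t$.

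As in the proofs of \ref{t2.9} and \ref{p2.8}, freeness of $P$ then comes down to showing that $S^{-1}P$ is free for $S=k[Z_1,\ldots,Z_q]-(0)$ with a suitably chosen $q$, the subsequent patching and descent being a transcription of \cite{R2}. The coefficient-field hypothesis enters decisively here: it lets me regard $g_1$ as a genuine polynomial variable over a lift of the coefficient field of $R/(g_1)$, so that for the appropriate lower-dimensional $\wt R$ the inclusion $\wt R[g_1]\subset R$ is an analytic isomorphism along a monic polynomial. I would then produce, via \ref{p2.8}, an element $g\in k[Z_1,\ldots,Z_p]$ with $P_g$ free, patch $P$ against a free module over the relevant localization in the fiber-product square of \ref{ai}, and apply Suslin's monic-inversion theorem (\cite{Su1}, Theorem 1) to conclude that $P$ is extended from a ring of the same type $[\,\cdot,\cdot,\cdot\,]^{*}$ but with parameter count $t-1$ and dimension lowered accordingly. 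Crucially the coefficient-field property regenerates: the image of the coefficient field of $R/(g_1)$ in $R/(g_1,g_2)$ is again a coefficient field, so the inductive hypothesis applies to the $t-1$ parameters $g_2,\ldots,g_t$, whose rank threshold $(t-1)-1$ is met because $P$ has rank $\ge t-1$. Wherever \cite{R2} invoked Swan's theorem I would substitute \ref{g} and \ref{p33}, and wherever freeness after inverting a parameter was needed, \ref{p40}.

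The step I expect to be the main obstacle is the faithful realization of the coefficient field: lifting it so as to manufacture the analytic isomorphism along a monic polynomial in $g_1$, and checking simultaneously that each ring produced by the localizations and the linear changes of the $Z_i$ and $Y_j$ is still genuinely of type $R[\,\cdot,\cdot,\cdot\,]^{*}$, with every $l_i$ of the prescribed form and every $f_i(T)\in k[T]$. Once this is secured the dimension-and-parameter induction is routine, being Rao's argument with \ref{g}, \ref{p33}, \ref{p40} and \ref{p2.8} in place of his elementary and Swan-type inputs.
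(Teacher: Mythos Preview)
Your proposal is correct and matches the paper's approach exactly: the paper simply states that the proof is the same as that of (\cite{R2}, Theorem 2.12) using the results above (\ref{g}, \ref{p33}, \ref{p40}, \ref{p2.8}) and omits the details, and what you have written is precisely an unpacking of that strategy with the appropriate substitutions.
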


The following result generalizes Popescu's result \cite{Po1}. For
$t\leq 1$, this follows from (\ref{p28}).

\begin{theorem}\label{t1}
Let $(R,\mm,K)$ be a regular local ring of dimension $d$ containing a
field $k$ such that either char $k=0$ or char $k=p$ and tr-deg
$K/\BF_p\geq 1$.  Let $g_1,\ldots,g_t$ be regular parameters of $R$
which are linearly independent modulo $\mm^2$. Let $A=R_{g_1\ldots
  g_t}[Y_{1},\ldots, Y_{m}, f_1(l_1)^{-1},\ldots, f_n(l_n)^{-1}]$ be a
ring of type $R_{g_1\ldots g_t}[d-1,m,n]^{*}$. Then every projective
$A$-module of rank $\geq t$ is free.
\end{theorem}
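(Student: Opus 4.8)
The strategy is to mimic Popescu's argument for the Laurent polynomial case, replacing every appeal to cancellation/freeness over intermediate rings by the analogous results proved earlier in this excerpt. The overall structure is a reduction by Popescu's approximation (\ref{Popescu}) to the geometric case, where the residue field can be arranged to be infinite, and then an application of the infinite residue field machinery, specifically (\ref{t2.9}) and (\ref{t2.12}).

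First I would use the Lindel–Popescu descent. By (\ref{Popescu}), write $R$ as a filtered inductive limit of regular local rings $R_i$ essentially of finite type over $\BZ$ (hence over the prime field). As in the proof of (\ref{p28}), choose an index $j$ large enough that the regular parameters $g_1,\ldots,g_t$ descend to regular parameters $g_1',\ldots,g_t'$ of $R_j$ that remain linearly independent modulo the square of the maximal ideal, and so that the given projective module $P$ is extended from the corresponding ring $A'=(R_j)_{g_1'\cdots g_t'}[Y_1,\ldots,Y_m,f_1(l_1)^{-1},\ldots,f_n(l_n)^{-1}]$ of type $(R_j)_{g_1'\cdots g_t'}[d-1,m,n]^*$. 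Since $\mathrm{tr\text{-}deg}\,K/k\geq 1$ (in characteristic $0$ automatically, and in characteristic $p$ by the standing hypothesis $\mathrm{tr\text{-}deg}\,K/\BF_p\geq 1$), I can arrange that the residue field of $R_j$ is \emph{infinite}. This is the crucial gain: it reduces the statement to the case of a regular $k$-spot with infinite residue field, where the heavy lifting of the previous section applies.

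Second, having reduced to $R_j$ a regular $k$-spot with infinite residue field, I would apply the results of Section 5. The hypothesis is exactly the setting of (\ref{t2.9}) and (\ref{t2.12}). When the rank bound $t$ already satisfies $t\geq \min\{t,[d/2]\}$, freeness of $P'$ (and hence of $P$) follows directly from (\ref{t2.9}). To get freeness under the sharper bound $\mathrm{rank}\geq t$ rather than $\geq\min\{t,[d/2]\}$, I would invoke (\ref{t2.12}): passing to $R_j/(g_1')$, which in the geometric (essentially finite type) situation can be assumed to contain a coefficient field, gives freeness for rank $\geq t-1$, which is stronger than what is needed. Freeness of $P'$ over $A'$ then forces freeness of $P$ over $A$ by extension. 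This mirrors the passage from (\ref{p32}) to (\ref{p28}) verbatim, now with the cancellation and freeness inputs supplied by (\ref{p33}), (\ref{tt}), (\ref{p40}), and (\ref{t2.9})–(\ref{t2.12}).

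\textbf{Main obstacle.} The delicate point is the descent step, not the geometric case. One must check that \emph{all} the data of the ring of type $R_{g_1\cdots g_t}[d-1,m,n]^*$ descend compatibly to some finite stage $R_j$: the regular parameters $g_i$, their linear independence modulo $\mm^2$ (an open condition that must survive the limit), the coefficients $a_{ij}\in k$, the polynomials $f_i(T)\in k[T]$ and the scalars $b_i\in k$ (which cause no trouble since they already live in the base field $k\subseteq R_j$), and above all that the projective module $P$ itself is extended from a finite stage while preserving its rank. The standard limit argument for projective modules over finitely presented algebras handles the existence of such a $j$, but verifying that the descended ring is again of the required type $(R_j)_{g_1'\cdots g_t'}[d-1,m,n]^*$ and that the transcendence-degree condition can be used to force the residue field infinite is where the care lies. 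Once these bookkeeping points are settled, the proof reduces cleanly to (\ref{t2.9}) and (\ref{t2.12}).
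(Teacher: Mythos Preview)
Your overall strategy matches the paper's proof exactly: Popescu descent via (\ref{Popescu}) to a regular $k$-spot $R_j$ with infinite residue field, descent of the parameters $g_i$ and the projective module $P$, then appeal to the results of Section~5. The paper's proof is precisely this, invoking only (\ref{t2.9}) at the end.

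Your final step, however, is overcomplicated by a confusion about the rank bounds. You write that (\ref{t2.9}) applies ``when the rank bound $t$ already satisfies $t\geq \min\{t,[d/2]\}$'' and then reach for (\ref{t2.12}) to handle a remaining case. But $t\geq \min\{t,[d/2]\}$ is a tautology: it holds for every $t$ and every $d$. Hence the hypothesis $\operatorname{rank} P\geq t$ of the theorem always implies $\operatorname{rank} P\geq \min\{t,[d'/2]\}$ for the descended ring (with $d'=\dim R_j$), so (\ref{t2.9}) already gives freeness of $P'$ in all cases. The detour through (\ref{t2.12}) is unnecessary, and it also imports the extra hypothesis that $R_j/(g_1')$ contain a coefficient field, which you would otherwise have to justify. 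Drop that paragraph and the argument is exactly the paper's.
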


\begin{proof}
We follow the proof of (\ref{p28}) and use same notations.  As in
\cite{Po1}, if $g=g_{1}\ldots g_t$, then $g$ is an extension of $g'\in
R_j$ for some $j$. Further, $g'$ is a product of regular parameters
$g_1',\ldots, g_t'$ of $(R_j,\mm_j)$ which are linearly independent
modulo $\mm_j^2$.  If $P$ is a projective $A_f$-module, then by
choosing possibly a bigger index $j\in I$, we may assume that $P$ is
an extension of a projective module $P'$ over $A'$, where
$A'=(R_j)_{g'}[Y_{1},\ldots, Y_{m}, f_1(l_1)^{-1},\ldots,
  f_n(l_n)^{-1}]$. If $\dim R'=d'$, then $A'$ is a ring of type
$(R_j)_{g'}[d'-1,m,n]^{*}$.  Now $R_j$ is a regular $k$-spot. Since
tr-deg $K/k\geq 1$, we can assume that the residue field of $R_j$ is
infinite. By (\ref{t2.9}), $P'$ and hence $P$ is free. $\hfill \gj$
\end{proof}
\medskip

The following result is immediate from (\ref{t1}).

\begin{corollary}
Let $(R,\mm,K)$ be a regular local ring of dimension $d$ containing a
field $k$ such that either char $k=0$ or char $k=p$ and tr-deg
$K/\BF_p\geq 1$.  Let $f,g$ be regular parameters of $R$ which are
linearly independent modulo $\mm^2$. If $A$ is a ring of type
$R[d,m,n]^{*}$, then every projective module over $A,A_f$ and $A_{fg}$
are free.
\end{corollary}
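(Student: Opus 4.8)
The plan is to recognize each of the three rings as one of a type already settled, and then quote (\ref{p28}) and (\ref{t1}) with the appropriate value of $t$; the hypotheses on the characteristic of $k$ and on the transcendence degree of $K$ over $\BF_p$ are inherited verbatim from the corollary. First I would dispose of $A$ and $A_f$. The ring $A_f=R_f[Y_1,\ldots,Y_m,f_1(l_1)^{-1},\ldots,f_n(l_n)^{-1}]$ is of type $R_f[d-1,m,n]^{*}$, so (\ref{t1}) with $t=1$ (the single regular parameter $f$) makes every projective $A_f$-module of rank $\geq 1$ free, and rank-$0$ modules are free trivially; thus all projective $A_f$-modules are free. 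The non-localized ring $A$, of type $R[d,m,n]^{*}$, is exactly the first assertion of (\ref{p28}). So these two cases require nothing new.

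The genuinely new case is $A_{fg}$. Writing $A_{fg}=R_{fg}[Y_1,\ldots,Y_m,f_1(l_1)^{-1},\ldots,f_n(l_n)^{-1}]$, I would first confirm that it fits (\ref{t1}) with $t=2$ attached to the regular parameters $f,g$. The only point to verify is the dimension: extending $f,g$ to a regular system of parameters $f,g,x_3,\ldots,x_d$, the prime $(f-g,x_3,\ldots,x_d)$ is generated by part of a regular system of parameters, hence has height $d-1$, and it avoids $fg$ (in the quotient $f\equiv g$ is a uniformizer, so neither $f$ nor $g$ lies in it); meanwhile the unique height-$d$ prime $\mm$ contains $fg$. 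Therefore $\dim R_{fg}=d-1$ and $A_{fg}$ is a ring of type $R_{fg}[d-1,m,n]^{*}$. Since $f,g$ are linearly independent modulo $\mm^2$, (\ref{t1}) with $t=2$ now gives that every projective $A_{fg}$-module of rank $\geq 2$ is free.

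What remains is to cover ranks $0$ and $1$ over $A_{fg}$, and this is the only spot calling for an argument rather than a citation. Rank $0$ is vacuous. For a projective $A_{fg}$-module $P$ of rank $1$, I would form $P\op A_{fg}$, which has rank $2$ and is therefore free by the previous step; hence $P$ is stably free of rank $1$. Taking top exterior powers then yields $P\iso\gL^{2}(P\op A_{fg})\iso\gL^{2}(A_{fg}^{2})\iso A_{fg}$, so $P$ is free. This closes the rank gap inherent in (\ref{t1}) and settles all three rings. I expect no real difficulty: the corollary is essentially a bookkeeping consequence of (\ref{p28}) and (\ref{t1}), the only substantive checks being the dimension count $\dim R_{fg}=d-1$ and the rank-$1$ reduction to stable freeness.
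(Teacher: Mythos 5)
Your proposal is correct and takes essentially the same route as the paper, which disposes of the corollary as ``immediate'' by citing (\ref{p28}) for $A$ and $A_f$ and (\ref{t1}) with $t=2$ for $A_{fg}$. You in fact supply two details the paper leaves unstated: the verification that $\dim R_{fg}=d-1$ (needed to see $A_{fg}$ is of type $R_{fg}[d-1,m,n]^{*}$, and your height-$(d-1)$ prime avoiding $fg$ does this correctly), and the rank-$1$ case over $A_{fg}$, which (\ref{t1}) with $t=2$ does not cover and which your stably-free determinant argument (equivalently, triviality of the Picard group of the UFD $A_{fg}$) settles.
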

\medskip

{\bf Acknowledgements:} We would like to thank Professor D. Popescu
for sending us a copy of \cite{Po1} and Mrinal K. Das for bringing
Gabber's result to our notice.

%%%%%%%%%%%%%%%%%%%%%%%%%%%%%%%%%%%%%%%%
\small
{}


\begin{thebibliography}{}

\bibitem{bbr}{} A. Bak, R. Basu and R.A. Rao, {\it Local-global
  principle for transvection groups}, Proc. Amer. Math. Soc. {\bf 138}
  (2010) 1191-1204.

\bibitem{Bass}{} H. Bass, {\it K-theory and stable algebra}, IHES {\bf
22} (1964) 5-60.

\bibitem{B}{} H. Bass, {\it Some problems in classical algebraic
  $K$-Theory II}, Lect. Not. Math. {\bf 342}, Springer Verlag (1972).

\bibitem{BR}{} S.M. Bhatwadekar and R.A. Rao, {\it On a question of
  Quillen}, Trans. Amer. Math. Soc. {\bf 279} (1983)  801-810.

\bibitem{DK}{} A.M. Dhorajiya and M.K. Keshari, {\it Projective
  modules over overrings of polynomial rings}, J. Algebra {\bf
  323} (2010) 551-559.

\bibitem{OG}{} O. Gabber, {\it On purity theorems for vector bundles},
  Int. Math. Res. Not. {\bf 15} (2002) 783-788

\bibitem{K09}{} M.K. Keshari, {\it Cancellation problem for projective
  modules over affine algebras}, J. K-Theory {\bf 3} (2009) 561-581.

\bibitem{L95}{} H. Lindel, {\it Unimodular elements in projective
  modules}, J. Algebra {\bf 172} (1995) 301-319.

\bibitem{L}{} H. Lindel, {\it On a question of Bass, Quillen and
  Suslin concerning projective modules over polynomial rings},
  Invent. Math. {\bf 65} (1981) 319-323.

\bibitem{N}{} B.S. Nashier, {\it Efficient generation of ideals in
  polynomial rings}, J. Algebra {\bf 85} (1983) 287-302.

\bibitem{N83}{} B.S. Nashier, {\it On the conormal bundle of ideals},
  J. Algebra {\bf 85} (1983) 361-367.

%\bibitem{Oj}{} M. Ojanguren, {\it Quadratic forms over regular
%rings}, J. Ind. Math. Soc. {\bf 43} (1979) 109-116.

\bibitem{P}{} B. Plumstead, {\it The conjectures of Eisenbud and
Evans}, Amer. J. Math. {\bf 105} (1983) 1417-1433.

%\bibitem{Po}{} D. Popescu, {\it Polynomial rings and their projective
% modules}, Nagoya Math. J. {\bf 113} (1989) 121-128.

\bibitem{Po2}{} D. Popescu, {\it Polynomial rings and their projective modules},
Nagoya Math J. {\bf 113} (1989) 121-128. 

\bibitem{Po1}{} D. Popescu, {\it On a question of Quillen},
  Bull. Math. Soc. Sci. Math. Roumanie (N.S.) {\bf 45 (93)} no. 3-4 (2002)
  209–212.

\bibitem{Q}{} D. Quillen, {\it Projective modules over polynomial
  rings}, Invent. Math. {\bf 36} (1976) 167-171.

\bibitem{Rao}{} R.A. Rao, {\it A question of H. Bass on the
cancellative nature of large projective modules over polynomial
rings}, Amer. J. Math. {\bf 110} (1988) 641-657.

%\bibitem{R1}{} R. A. Rao, {\it Stability theorems for overrings of
%polynomial rings, II}, J. Algebra, {\bf 78} (1982) 437-444.

\bibitem{R2}{} R.A. Rao, {\it On projective $R_{f_1\ldots
    f_t}$-modules}, Amer. J. Math.  {\bf 107} (1985) 387-406.

%\bibitem{Su}{} A.A. Suslin, {\it On the structure of the special linear  group over polynomial rings}, Math. USSR Izv. {\bf 11} (1977) 221-238.

\bibitem{Su1}{} A.A. Suslin, {\it Projective modules over a polynomial
  ring are free}, Soviet Math. Dokl. {\bf 17} (1976) 1160-1164.

\bibitem{Sw}{} R.G. Swan, {Gubeladze's proof of Anderson's
  conjecture}, Azumaya algebras, actions and modules,
  Contemp. Math. {\bf 124} (1992) 215-250.

\bibitem{W}{} A. Wiemers, {\it Cancellation properties of projective
  modules over Laurent polynomial rings}, J. Algebra, {\bf 156} (1993)
  108-124.

\end{thebibliography}
\end{document}